\begin{document}
\title{Large gaps between consecutive zeros of the Riemann zeta-function}
\author{H. M. Bui}
\address{Mathematical Institute, University of Oxford, OXFORD, OX1 3LB}
\email{hung.bui@maths.ox.ac.uk}

\thanks{The author is supported by the EPSRC Postdoctoral Fellowship grant EP/F041748/1.}

\begin{abstract}
Combining the mollifiers, we exhibit other choices of coefficients that improve the results on large gaps between the zeros of the Riemann zeta-function. Precisely, assuming the Generalized Riemann Hypothesis (GRH), we show that there exist infinitely many consecutive gaps greater than $3.033$ times the average spacing. 
\end{abstract}
\maketitle

\section{Introduction}

Assuming the Riemann Hypothesis (RH), we can write the nontrivial zeros of the Riemann zeta-function as $\rho=\tfrac{1}{2}+i\gamma$, where $\gamma\in\mathbb{R}$. For $0<\gamma\leq\gamma'$ two consecutive ordinates of zeros, we define the normalized gap
\begin{equation*}
\delta(\gamma)=(\gamma'-\gamma)\frac{\log\gamma}{2\pi}.
\end{equation*}
It is a well-known theorem that the number of nontrivial zeros of $\zeta(s)$ with ordinates in $[0,T]$ is $\frac{1}{2\pi}T\log T+O(T)$. Hence on average $\delta(\gamma)$ is $1$. In 1973, by studying the pair correlation of the zeros of the Riemann zeta-function, Montgomery \textbf{\cite{M1}} suggested that there exist arbitrarily large and small gaps between consecutive zeros of $\zeta(s)$. That is to say
\begin{equation*}
\lambda=\limsup_{\gamma}\delta(\gamma)=\infty\quad\textrm{and}\ \ \mu=\liminf_{\gamma}\delta(\gamma)=0,
\end{equation*}
where $\gamma$ runs over all the ordinates of the zeros of the Riemann zeta-function.

In this article, we will focus only on the large gaps. Our main theorem is

\newtheorem{theo}{Theorem}[section]\begin{theo}
Assuming GRH. Then we have $\lambda>3.033$.
\end{theo}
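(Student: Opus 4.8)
The plan is to run the mechanism of Montgomery--Odlyzko and Conrey--Ghosh--Gonek, which turns the production of a large gap into an extremal problem for Dirichlet polynomials, and then to beat the previous records by solving that extremal problem with a mollifier assembled from two pieces rather than one. Write $L=\log T$. The first step is the reduction. Suppose $\psi(s)=\sum_{n\le T^{\vartheta}}b(n)n^{-s}$ is a Dirichlet polynomial with $\vartheta$ in the range where the mollified second moment of $\zeta$ can be evaluated, and set $f(s)=\zeta(s)\psi(s)$. Under GRH the zeros of $\zeta$ lie on $\Re s=\tfrac12$, and the key inequality of Montgomery--Odlyzko (a sharpened Wirtinger-type inequality, engineered so as to absorb also the extraneous zeros of $\psi$) shows that if every gap between consecutive ordinates of zeros in $[T,2T]$ were at most $\tfrac{2\pi\kappa}{L}$, then a definite inequality would have to hold between $\int_T^{2T}|f(\tfrac12+it)|^2\,dt$ and the shifted correlations $\int_T^{2T}f(\tfrac12+it)\,\overline{f(\tfrac12+it+\tfrac{2\pi ic}{L})}\,dt$, $0\le c\le\kappa$. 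Equivalently: $\lambda>\kappa$ as soon as one can exhibit a $\psi$ for which a certain explicit ratio of mollified mean values of $\zeta$ at nearby arguments exceeds an elementary function $h(\kappa)$ (a combination of $\sin$, $\cos$ and $\kappa$). So the whole problem reduces to choosing the coefficients $b(n)$ to make one quadratic form as large as possible relative to another.

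Following Bredberg I would begin from $\psi_1(s)=\sum_{n\le y_1}\mu(n)\,n^{-s}\,P_1\!\big(\tfrac{\log(y_1/n)}{\log y_1}\big)$, which makes $f=\zeta\psi_1$ close to a normalised model, and then \emph{combine} it with a second mollifier $\psi_2(s)=\sum_{n\le y_2}\mu(n)\,b_2(n)\,n^{-s}\,P_2\!\big(\tfrac{\log(y_2/n)}{\log y_2}\big)$, whose extra coefficient $b_2(n)$ is a short Dirichlet-convolution factor of the type that has proved useful in non-vanishing and critical-zero problems; here $y_1,y_2$ are small powers of $T$ and $P_1,P_2$ are real polynomials to be chosen. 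The mean values required are the twisted second moment
\[
\int_T^{2T}\Big|\zeta\big(\tfrac12+\tfrac{a}{L}+it\big)\Big|^2\,\big|\psi(\tfrac12+it)\big|^2\,dt,\qquad |a|\ll 1,
\]
and its companions with one or both shifts. Writing $|\psi|^2=|\psi_1|^2+2\Re(\psi_1\overline{\psi_2})+|\psi_2|^2$ and feeding each piece into the evaluation of the mollified second moment with shifts (Balasubramanian--Conrey--Heath-Brown, Conrey), every term reduces to an arithmetic sum in the coefficients which, after the usual contour manipulation, yields a main term expressible as a multiple integral of $P_1,P_2$ and their derivatives against explicit kernels, plus error terms; GRH is what lets me take $y_1,y_2$ long enough and control the prime sums hidden inside $b_2$ and the other remainders.

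With these asymptotics in place, $\lambda>\kappa$ holds whenever a ratio $\mathcal{Q}_1(P_1,P_2;\kappa,\vartheta)/\mathcal{Q}_2(P_1,P_2;\kappa,\vartheta)>h(\kappa)$, where $\mathcal{Q}_1,\mathcal{Q}_2$ are explicit quadratic functionals of the pair $(P_1,P_2)$ with coefficients polynomial in $\kappa$ and the lengths. Restricting $P_1,P_2$ to polynomials of bounded degree turns this into a finite-dimensional generalised eigenvalue problem, which I would treat numerically: for each $\kappa$ compute the optimal $(P_1,P_2)$, optimise also over the shape of $b_2$ and over the admissible lengths, and raise $\kappa$ until the inequality just fails. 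The assertion is that this returns a value exceeding $3.033$; the bound is then made rigorous by carrying the numerics with explicit rational data and sign-definite remainders.

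The main obstacle is the middle step: evaluating the twisted second moment of $\zeta$ against the \emph{combined} mollifier, with enough uniformity in the shift $a$ (the Montgomery--Odlyzko criterion integrates over a one-parameter family of shifts), and --- above all --- handling the cross term $\Re(\psi_1\overline{\psi_2})$, a correlation of two Dirichlet polynomials of different lengths with the arithmetically more intricate coefficient $b_2$, precisely enough that its contribution genuinely improves on the single-mollifier optimum instead of being lost in the error. That is exactly where GRH and the most delicate estimation are needed; once that term is in hand, the optimisation, though lengthy, is routine.
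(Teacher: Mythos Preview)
Your proposal describes a different machine from the one the paper actually runs, and the substitution is not harmless.

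First, the reduction. The paper does \emph{not} use a Wirtinger--type inequality or continuous shifted correlations $\int f\,\overline{f(\cdot+\text{shift})}$. It uses Mueller's device: with $\mathscr{M}_1(H,T)=\int_T^{2T}|H(\tfrac12+it)|^2dt$ and
\[
\mathscr{M}_2(H,T;c)=\int_{-c/L}^{c/L}\sum_{T\le\gamma\le 2T}\big|H(\tfrac12+i(\gamma+\alpha))\big|^2\,d\alpha,
\]
one has $\lambda>c/\pi$ as soon as $\mathscr{M}_2/\mathscr{M}_1<1$. The hard analysis is therefore the evaluation of \emph{discrete} sums over ordinates $\gamma$, handled via the Landau--Gonek explicit formula and contour shifts; GRH (for Dirichlet $L$-functions, not only for $\zeta$) enters precisely in bounding the auxiliary Dirichlet series $Q^{*}(s,\alpha,k)$ arising from $\sum_\gamma \zeta H_2(\rho+i\alpha)H_1(1-\rho-i\alpha)$. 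Your continuous twisted second moment with shifts is the Hall/Bredberg line, and that line, even with your two-piece mollifier, is not known to reach $3.033$.

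Second, the amplifier. The paper's $H$ is \emph{not} of the form $\zeta(s)\psi(s)$. It is $H=H_1+\zeta H_2$ with
\[
H_1(s)=\sum_{n\le y}\frac{d_{r+1}(n)P_1[n]}{n^s},\qquad H_2(s)=\sum_{n\le y}\frac{d_r(n)P_2[n]}{n^s},
\]
so the coefficients are generalised divisor functions $d_r$, not $\mu$, and crucially there is a ``bare'' piece $H_1$ with no $\zeta$ factor. The framework $f=\zeta\psi$ you sketch is exactly Ng's setup, and within it the optimum is $\lambda>3$; the improvement to $3.033$ comes precisely from adding the $H_1$ term and evaluating the new cross sums $\sum_\gamma H_1(\rho+i\alpha)H_1(1-\rho-i\alpha)$ and $\sum_\gamma \zeta H_2(\rho+i\alpha)H_1(1-\rho-i\alpha)$ (Lemmas~2.4 and~2.5). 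Your $\psi_1+\psi_2$ with $\mu$-weights and a convolution factor $b_2$ is a different object, and you give no evidence that its optimisation surpasses $3$.

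Finally, even as a sketch the proposal stops short of a proof: the sentence ``the assertion is that this returns a value exceeding $3.033$'' is a hope, not a calculation. The paper, by contrast, specifies $r=2$, exhibits explicit degree-$10$ polynomials $P_1,P_2$, and checks numerically that $h(3.033\pi)<1$. Any valid alternative must end with a comparable verification.
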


Selberg \textbf{\cite{S}} remarked that he could prove $\lambda>1$. Assuming RH, Mueller \textbf{\cite{M}} showed that $\lambda>1.9$, and later, by a different approach, Montgomery and Odlyzko \textbf{\cite{MO}} obtained $\lambda>1.9799$. The work of Mueller \textbf{\cite{M}} is based on the following idea.

Let $H:\mathbb{C}\rightarrow\mathbb{C}$ and consider the following functions
\begin{equation*}
\mathscr{M}_1(H,T)=\int_{T}^{2T}|H({\scriptstyle{\frac{1}{2}}}+it)|^2dt
\end{equation*}
and
\begin{equation*}
\mathscr{M}_2(H,T;c)=\int_{-c/L}^{c/L}\sum_{T\leq\gamma\leq 2T}|H({\scriptstyle{\frac{1}{2}}}+i(\gamma+\alpha))|^2d\alpha,
\end{equation*}
where $L=\log\frac{T}{2\pi}$. One notes that if
\begin{equation}\label{51}
h(c):=\frac{\mathscr{M}_2(H,T;c)}{\mathscr{M}_1(H,T)}<1,
\end{equation}
then $\lambda>c/\pi$, and if $h(c)>1$, then $\mu<c/\pi$.

Mueller \textbf{\cite{M}} applied this idea to $H(s)=\zeta(s)$. Using $H(s)=\sum_{n\leq T^{1-\varepsilon}}d_{2.2}(n)n^{-s}$, Conrey, Ghosh and Gonek \textbf{\cite{CGG1}} deduced that $\lambda>2.337$. Here $d_r(n)$ is the coefficient of $n^{-s}$ in the Dirichlet series of $\zeta(s)^r$. Later, assuming GRH, they applied to $H(s)=\zeta(s)\sum_{n\leq T^{1/2-\varepsilon}}n^{-s}$ and obtained $\lambda>2.68$ \textbf{\cite{CGG2}}. By considering a more general mollifier
\begin{equation*}
H(s)=\zeta(s)\sum_{n\leq y}\frac{d_{r}(n)P[n]}{n^s},
\end{equation*}
where $y=T^{1/2-\varepsilon}$ and $P[n]=P(\frac{\log y/n}{\log y})$, Ng \textbf{\cite{Ng}} improved that result to $\lambda>3$. In the last two papers, the assumption of GRH is necessary in order to estimate the discrete mean value over the zeros in $\mathscr{M}_2(H,T;c)$. In connection to this work, we also mention a result of Hall \textbf{\cite{H}}, who showed that $\lambda>2.6306$. The results in Hall's paper are actually unconditional, but a lower bound for $\lambda$ can only be obtained if the Riemann Hypothesis is assumed.  

As an extension of Mueller's idea, we are going to use
\begin{equation*}
H(s)=H_{1}(s)+\zeta(s)H_{2}(s),
\end{equation*}
where
\begin{equation*}
H_1(s)=\sum_{n\leq y}\frac{d_{r+1}(n)P_1[n]}{n^s}\qquad\textrm{and}\qquad H_2(s)=\sum_{n\leq y}\frac{d_{r}(n)P_2[n]}{n^s}.
\end{equation*}
Here $y=T^{\vartheta}$, $0<\vartheta\leq1\leq r$, and $P_1[n]=P_1(\frac{\log y/n}{\log y})$, $P_2[n]=P_2(\frac{\log y/n}{\log y})$, where $P_1(x)$, $P_2(x)$ are two polynomials which will be specified later.

\newtheorem{rem}{Remark}[section]\begin{rem}
\emph{It is not clear how to choose some ``good'' $r$, $P_1(x)$ and $P_2(x)$ to obtain the best result the method would give. It is probable that with a better choice of coefficients our theorem can be significantly improved. Nevertheless, our primary goal here is to exhibit a more general mollifier that could improve the work of \textbf{\cite{M}},\textbf{\cite{CGG2}},\textbf{\cite{Ng}}.}
\end{rem}

The work was commented while the author was visiting the University of Rochester. The author would like to thank Professor Steve Gonek for his support and encouragement during that time. Thanks also go to Professor Micah Milinovich for various stimulating discussions.

\section{Main lemmas}

We state our various lemmas concerning the ``square'' terms and ``cross' terms, which come up in the evaluations of $\mathscr{M}_1(H,T)$ and $\mathscr{M}_2(H,T;c)$.

\newtheorem{lemm}{Lemma}[section]\begin{lemm}
Suppose $0<\vartheta<\tfrac{1}{2}$. We have
\begin{equation*}
\int_{T}^{2T}|H_1(\tfrac{1}{2}+it)|^2dt\sim \frac{a_{r+1}T(\log y)^{(r+1)^2}}{\Gamma((r+1)^{2})}\int_{0}^{1}(1-x)^{(r+1)^2-1}P_{1}(x)^2dx,
\end{equation*}
where 
\begin{equation*}
a_{r}=\prod_{p}\bigg(\bigg(1-\frac{1}{p}\bigg)^{r^{2}}\sum_{n\geq0}\frac{d_{r}(p^n)^2}{p^{n}}\bigg).
\end{equation*}
\end{lemm}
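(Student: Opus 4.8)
The plan is to reduce the integral to the diagonal sum of a Dirichlet polynomial by the mean value theorem of Montgomery and Vaughan, and then to evaluate that sum by contour integration using the arithmetic factorization of $\sum_{n}d_{r+1}(n)^2n^{-s}$.

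First I would write $H_1(\tfrac12+it)=\sum_{n\le y}b_nn^{-it}$ with $b_n=d_{r+1}(n)P_1[n]n^{-1/2}$ and apply the Montgomery--Vaughan estimate
\begin{equation*}
\int_{T}^{2T}\Big|\sum_{n\le y}b_nn^{-it}\Big|^2dt=\sum_{n\le y}|b_n|^2\big(T+O(n)\big),
\end{equation*}
which gives
\begin{equation*}
\int_{T}^{2T}|H_1(\tfrac12+it)|^2dt=T\sum_{n\le y}\frac{d_{r+1}(n)^2P_1[n]^2}{n}+O\Big(\sum_{n\le y}d_{r+1}(n)^2\Big).
\end{equation*}
Since $P_1[n]\ll1$ and $\sum_{n\le y}d_{r+1}(n)^2\ll y(\log y)^{(r+1)^2-1}$, while $y=T^{\vartheta}$ with $\vartheta<\tfrac12$, the error term is $O(T^{\vartheta}(\log T)^{(r+1)^2-1})=o(T)$, negligible against the claimed main term; thus everything reduces to the asymptotic evaluation of $\sum_{n\le y}d_{r+1}(n)^2P_1[n]^2n^{-1}$. (Any $\vartheta<1$ would suffice for this step; the restriction $\vartheta<\tfrac12$ is needed only for the later lemmas that involve the factor $\zeta(s)H_2(s)$.)

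Next, writing $P_1(x)^2=\sum_j c_jx^j$ and recalling $P_1[n]=P_1\big(\tfrac{\log(y/n)}{\log y}\big)$, it suffices to evaluate $\sum_{n\le y}d_{r+1}(n)^2n^{-1}(\log(y/n))^j$ for each $j$. Here I would use the Euler product
\begin{equation*}
D(s):=\sum_{n\ge1}\frac{d_{r+1}(n)^2}{n^s}=\zeta(s)^{(r+1)^2}G(s),\qquad G(s)=\prod_p\Big(1-\frac1{p^s}\Big)^{(r+1)^2}\sum_{k\ge0}\frac{d_{r+1}(p^k)^2}{p^{ks}};
\end{equation*}
since $d_{r+1}(p)=r+1$, each local factor of $G$ equals $1+O(p^{-2s})$, so $G(s)$ converges absolutely in $\Re s>\tfrac12$ and $G(1)=a_{r+1}$. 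By Perron's formula,
\begin{equation*}
\sum_{n\le y}\frac{d_{r+1}(n)^2}{n}\big(\log(y/n)\big)^j=\frac{j!}{2\pi i}\int_{(c)}y^sD(1+s)\,\frac{ds}{s^{j+1}}
\end{equation*}
for a suitable $c>0$; moving the contour to the left across the pole at $s=0$ (pushing it to the boundary of the classical zero-free region and bounding $\zeta(1+s)$ there, so that the shifted integral is of lower order), the main term is $j!$ times the residue at $s=0$. Since $D(1+s)=a_{r+1}s^{-(r+1)^2}(1+O(s))$ and $y^s=e^{s\log y}$ near $s=0$, it follows that the sum on the left is asymptotic to $\dfrac{j!\,a_{r+1}}{((r+1)^2+j)!}(\log y)^{(r+1)^2+j}$.

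Finally, dividing by $(\log y)^j$, summing over $j$ against the coefficients $c_j$, and invoking the Beta-integral identity $\int_0^1(1-x)^{(r+1)^2-1}x^j\,dx=\dfrac{j!\,\Gamma((r+1)^2)}{\Gamma((r+1)^2+j+1)}$ to recognize $\sum_jc_j\dfrac{j!}{((r+1)^2+j)!}$ as $\dfrac{1}{\Gamma((r+1)^2)}\int_0^1(1-x)^{(r+1)^2-1}P_1(x)^2\,dx$, I obtain the stated formula. The step I expect to be the main obstacle is the contour shift: one needs enough control on $\zeta(1+s)$ (its polynomial growth and the absence of zeros) just to the left of $\Re s=1$ to cross the pole and discard the remaining integral. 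If sharp error terms are not required, one could instead quote the Selberg--Delange asymptotic $\sum_{n\le x}d_{r+1}(n)^2\sim\frac{a_{r+1}}{((r+1)^2-1)!}x(\log x)^{(r+1)^2-1}$ and recover the weighted sums by partial summation; the rest of the argument is routine bookkeeping.
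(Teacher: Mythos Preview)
Your proposal is correct and follows essentially the same approach as the paper: apply Montgomery--Vaughan to reduce to the diagonal sum $\sum_{n\le y}d_{r+1}(n)^2P_1[n]^2/n$, then evaluate that sum asymptotically. The only tactical difference is that the paper handles the weight $P_1[n]^2$ by quoting the unweighted asymptotic $\sum_{k\le y}d_r(k)^2/k\sim a_r(\log y)^{r^2}/\Gamma(r^2+1)$ (its Lemma~4.9, stated as well known) and then applying Stieltjes integration (Lemma~4.10), whereas you expand $P_1^2$ into monomials and run Perron plus a contour shift for each power before recombining via the Beta integral; your own suggested alternative (Selberg--Delange plus partial summation) is exactly what the paper does.
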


The ``cross'' term of $\mathscr{M}_1(H,T)$ is given by

\begin{lemm}
Suppose $0<\vartheta<\tfrac{1}{2}$. We have
\begin{eqnarray*}
\int_{T}^{2T}\zeta(\tfrac{1}{2}+it)H_{1}(\tfrac{1}{2}-it)H_{2}(\tfrac{1}{2}+it)dt\sim\frac{a_{r+1}T(\log y)^{(r+1)^2}}{\Gamma(r+1)\Gamma(r(r+1))}\int_{0}^{1}(1-x)^{r(r+1)-1}Q_{r}(x)P_{2}(x)dx,
\end{eqnarray*}
where  
\begin{equation*}
Q_{u}(x)=\int_{0}^{x}t^{u}P_{1}(x-t)dt.
\end{equation*}
\end{lemm}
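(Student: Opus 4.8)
The plan is to follow the same route as for the ``square'' term in Lemma~2.1, the only genuinely new feature being the extra factor $\zeta(\tfrac12+it)$. First I would open the two mollifiers, writing
$$H_1(\tfrac12-it)\,H_2(\tfrac12+it)=\sum_{h,k\le y}\frac{d_{r+1}(h)\,d_r(k)\,P_1[h]\,P_2[k]}{(hk)^{1/2}}\Big(\frac hk\Big)^{it},$$
so the integral is a sum over $h,k\le y$ of $\int_T^{2T}\zeta(\tfrac12+it)(h/k)^{it}\,dt$. To handle the latter I would insert the approximate functional equation for $\zeta$ in symmetric form, both sums of length $\asymp(t/2\pi)^{1/2}$. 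Since $h,k\le y=T^{\vartheta}$ with $\vartheta<\tfrac12$, two things happen: the dual sum (the one carrying $\chi(\tfrac12+it)$) has no stationary point in $[T,2T]$, and so contributes $O(T^{-A})$ after repeated integration by parts, and the whole diagonal is already seen by the principal sum $\sum_{n\le (t/2\pi)^{1/2}}n^{-1/2-it}$.

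From the principal sum one gets $\zeta(\tfrac12+it)(h/k)^{it}=\sum_n n^{-1/2}\big(h/(nk)\big)^{it}$, whose integral over $[T,2T]$ produces the main term $T$ exactly when $h=nk$; this forces $k\mid h$ and $n=h/k$, and the weight $n^{-1/2}=(k/h)^{1/2}$ combines with $(hk)^{-1/2}$ to give $1/h$. The remaining terms are off-diagonal, and a routine argument (the binding case being $0<|nk-h|\ll1$, where there are $O(1)$ admissible $n$ per pair $(h,k)$) bounds their total by $\ll(y^{2}+T^{1/2})T^{\varepsilon}=o(T)$, once more because $\vartheta<\tfrac12$. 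Hence
$$\int_{T}^{2T}\zeta(\tfrac12+it)H_{1}(\tfrac12-it)H_{2}(\tfrac12+it)\,dt\sim T\sum_{\substack{h,k\le y\\ k\mid h}}\frac{d_{r+1}(h)\,d_r(k)\,P_1[h]\,P_2[k]}{h}.$$

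The substantive step is the evaluation of this arithmetic double sum, and I would do it in two passes of a Selberg--Delange / Perron computation. Writing $h=k\ell$ it becomes $\sum_k k^{-1}d_r(k)P_2[k]\sum_{\ell\le y/k}\ell^{-1}d_{r+1}(k\ell)P_1[k\ell]$. For the inner sum one uses the factorization $\sum_\ell d_{r+1}(k\ell)\ell^{-1-w}=\zeta(1+w)^{r+1}\beta_{r+1}(k,w)$, with $\beta_{r+1}(k,\cdot)$ holomorphic near $w=0$ uniformly in $k$; writing the sum by Perron's formula and moving the $w$-contour to the left of $w=0$, the pole of order $r+1$ together with the Mellin transform of the $P_1$-weight (a polynomial in $\log(y/k\ell)$) yields, after the substitution $\log\ell=v\log y$ and with $x=1-\log k/\log y$, a factor $(\log y)^{r+1}\Gamma(r+1)^{-1}\int_0^x v^r P_1(x-v)\,dv=(\log y)^{r+1}\Gamma(r+1)^{-1}Q_r(x)$. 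One is left with a single sum over $k$ of shape $\sum_k k^{-1}\beta_{r+1}(k,0)d_r(k)\,Q_r(x_k)P_2(x_k)$; its Dirichlet series factors as $\zeta(1+s)^{r(r+1)}A(s)$ with $A$ holomorphic for $\Re s>-\tfrac12$, and a second contour shift extracts $(\log y)^{r(r+1)}\Gamma(r(r+1))^{-1}A(0)\int_0^1(1-x)^{r(r+1)-1}Q_r(x)P_2(x)\,dx$. Multiplying the two powers of $\log y$ gives $(r+1)^2$, and a short computation of the Euler factors — using the prime-power identity $\sum_{a\le n}d_r(p^a)=d_{r+1}(p^n)$, which collapses $A(0)$ to $\prod_p\big((1-1/p)^{(r+1)^2}\sum_{n\ge0}d_{r+1}(p^n)^2/p^n\big)=a_{r+1}$ — finishes the lemma.

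I expect the main obstacle to be precisely this last evaluation: one must prove the two factorizations with cofactors $\beta_{r+1}(k,w)$ and $A(s)$ holomorphic slightly to the left of the critical abscissae (so that the contour shifts produce genuine savings over $T(\log y)^{(r+1)^2}$), control the $k$-dependence of $\beta_{r+1}(k,0)$ so that the second sum really is of Selberg--Delange type, and carry the polynomial weights $P_1,P_2$ through both residue calculations without misplacing the Gamma factors. The analytic input about $\zeta$ — the symmetric approximate functional equation, the off-diagonal bounds, the absence of stationary phase in the dual sum — is all standard given $\vartheta<\tfrac12$, and I would expect this part of the argument to be essentially identical to the corresponding step in the proof of Lemma~2.1.
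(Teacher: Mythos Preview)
Your proposal is correct and reaches the same diagonal sum
\[
T\sum_{\substack{h,k\le y\\ k\mid h}}\frac{d_{r+1}(h)\,d_r(k)\,P_1[h]\,P_2[k]}{h}
\]
that the paper evaluates, but the route in both halves is different from what the paper actually does.

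For the analytic part the paper avoids the approximate functional equation altogether: it shifts the contour to $\sigma=a=1+L^{-1}$ (horizontal segments cost $O(yT^{1/4+\varepsilon})$), uses the trivial truncation $\zeta(s)=\sum_{n\le T^{1/2}}n^{-s}+O(L)$ valid there, and then invokes Montgomery--Vaughan's mean value theorem to isolate the diagonal $h=nk$ in one stroke. This is shorter than your approach via the symmetric approximate functional equation plus stationary-phase analysis of the dual sum, though yours is perfectly valid given $\vartheta<\tfrac12$.

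For the arithmetic part the paper again takes a more elementary line: rather than two Perron/contour-shift computations, it uses Stieltjes integration directly. Lemma~4.7 gives $\sum_{\ell\le y/k}d_{r+1}(k\ell)P_1[k\ell]/\ell \sim D_{r+1}(k)(\log y)^{r+1}\Gamma(r+1)^{-1}Q_r[k]$ (this is your $\beta_{r+1}(k,0)$), and then Lemma~4.10 handles the outer sum $\sum_k D_{r+1}(k)d_r(k)f[k]/k$, with Lemma~4.8 controlling the error terms uniformly in $k$. Your Selberg--Delange approach works too and is arguably more systematic; the paper's version trades generality for brevity. Your identification of the constant via $\sum_{a\le n}d_r(p^a)=d_{r+1}(p^n)$ is exactly what underlies the paper's Lemma~4.9.
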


These lemmas are proved in Section 5. The other square term of $\mathscr{M}_1(H,T)$ comes from a theorem of Conrey and Ghosh (cf. Theorem 1 \textbf{\cite{CG}}).

\begin{lemm}
Suppose $0<\vartheta<\tfrac{1}{2}$. We have
\begin{eqnarray*}
\int_{T}^{2T}|\zeta H_2(\tfrac{1}{2}+it)|^2dt\sim\frac{a_{r+1}T(\log y)^{(r+1)^2}}{\Gamma(r)^2\Gamma(r^{2})}\int_{0}^{1}(1-x)^{r^{2}-1}\bigg(\vartheta^{-1}R_{r-1}(x)^2-2R_{r}(x)R_{r-1}(x)\bigg)dx,
\end{eqnarray*}
where
\begin{equation*}
R_{u}(x)=\int_{0}^{x}t^{u}P_{2}(x-t)dt.
\end{equation*}
\end{lemm}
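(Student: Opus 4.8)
The plan is to reduce the integral $\int_{T}^{2T}|\zeta H_2(\tfrac12+it)|^2\,dt$ to a known mean-value formula for the mollified second moment of $\zeta$, and then extract the asymptotic by contour integration. First I would note that $\zeta(\tfrac12+it)H_2(\tfrac12+it)=\zeta(\tfrac12+it)\sum_{n\le y}d_r(n)P_2[n]n^{-1/2-it}$, which is exactly a $\zeta$-function times a Dirichlet polynomial mollifier of length $y=T^\vartheta$ with coefficients $b(n)=d_r(n)P_2[n]$. Theorem 1 of Conrey--Ghosh \textbf{\cite{CG}} gives an asymptotic formula of the shape
\begin{equation*}
\int_T^{2T}\bigg|\zeta(\tfrac12+it)\sum_{n\le y}\frac{b(n)}{n^{1/2+it}}\bigg|^2dt\sim T\sum_{h,k\le y}\frac{b(h)b(k)(h,k)}{hk}\bigg(\log\frac{T(h,k)^2}{2\pi hk}+2\gamma_0-1\bigg),
\end{equation*}
valid for $\vartheta<\tfrac12$ (this is the range where the off-diagonal terms are negligible and GRH is not yet needed). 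So the task becomes the evaluation of this arithmetic sum with $b(n)=d_r(n)P_2[n]$.

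Next I would carry out the standard analytic machinery for such sums. Writing $(h,k)$-sums via Euler products, one recognizes that $\sum_{h,k} d_r(h)d_r(k)(h,k)(hk)^{-1}\cdot(\text{smooth weights})$ is governed by the Dirichlet series
\begin{equation*}
\sum_{h,k}\frac{d_r(h)d_r(k)(h,k)}{h^{1+s_1}k^{1+s_2}}=\zeta(1+s_1)^{r^2}\zeta(1+s_2)^{r^2}\zeta(1+s_1+s_2)^{\text{(something)}}\cdot A(s_1,s_2),
\end{equation*}
more precisely the combinatorics of $d_r$ produce a factor $\zeta(1+s_1+s_2)$ that, together with the $\log$ from the Conrey--Ghosh formula, accounts for the shift $\vartheta^{-1}$ appearing in the answer, while $A$ is an Euler product holomorphic and nonzero near the origin with $A(0,0)=a_{r+1}$ after the normalization by the $\zeta$-factors (note $r^2+r^2$ does not match $(r+1)^2$, so the extra $\zeta(1+s_1+s_2)$ factor carries the discrepancy: $r^2+\text{extra}=(r+1)^2$ forces the extra exponent to behave like $2r$, consistent with the $\Gamma(r)^2\Gamma(r^2)$ in the denominator). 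I would then insert the polynomial weights $P_2[h],P_2[k]$ as contour integrals over the cutoff parameter, apply the residue/Perron-type evaluation, and after rescaling $s_i\mapsto s_i/\log y$ the sums over $h,k$ turn into the double integral against $(1-x)^{r^2-1}$, where the two pieces $\vartheta^{-1}R_{r-1}(x)^2$ and $-2R_r(x)R_{r-1}(x)$ come respectively from differentiating the $\zeta(1+s_1+s_2)$ pole (giving the $\vartheta^{-1}$, since the length is $y=T^\vartheta$ and the $\log T$ in the formula is $\vartheta^{-1}\log y$) and from the $2\gamma_0-1$ plus logarithmic-derivative terms.

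The main obstacle is the bookkeeping of the Euler-product factor and getting the exact constants and $\Gamma$-factors right: one must verify that after stripping off $\zeta(1+s_1)^{r^2}\zeta(1+s_2)^{r^2}\zeta(1+s_1+s_2)^{2r}$ the remaining Euler product is analytic at $(0,0)$ with value precisely $a_{r+1}$, and that the double contour integral collapses to the stated single integral with the correct power $(1-x)^{r^2-1}$ and the correct splitting into the two terms inside the parentheses. The $\vartheta^{-1}$ is the delicate point: it arises because the logarithm in Conrey--Ghosh is $\log(T(h,k)^2/2\pi hk)$ and when $h,k$ range up to $y=T^\vartheta$ the "main" logarithm is of size $\log T=\vartheta^{-1}\log y$, whereas the $(h,k)^2/hk$ part contributes at the scale $\log y$; tracking which term of the Laurent expansion each factor contributes to, so that exactly one factor of $\vartheta^{-1}$ survives on the $R_{r-1}^2$ term and none on the cross term, is where I expect to spend the most care. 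The rest is a routine (if lengthy) residue computation analogous to those in \textbf{\cite{CGG2}} and \textbf{\cite{Ng}}.
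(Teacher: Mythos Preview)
Your strategy is sound and is exactly the route the paper defers to: the paper gives no argument for this lemma at all, simply citing it as Theorem~1 of Conrey--Ghosh \textbf{\cite{CG}}. What you have sketched is essentially the content of that theorem (reduce to the Balasubramanian--Conrey--Heath-Brown mean-value formula, then evaluate the arithmetic sum with $b(n)=d_r(n)P_2[n]$ via the associated multiple Dirichlet series and residue extraction), so there is nothing to compare against in the present paper beyond the citation.

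One small correction to your bookkeeping: the cross term $-2R_r(x)R_{r-1}(x)$ does \emph{not} come from the constant $2\gamma_0-1$; that constant contributes only at order $(\log y)^{(r+1)^2-1}$ and is absorbed into the error. The cross term arises from the piece $\log\bigl((h,k)^2/hk\bigr)$ of the logarithm in the mean-value formula (equivalently, in your Dirichlet-series language, from the subleading terms in the Laurent expansion of the $\zeta(1+s_1)^r\zeta(1+s_2)^r$ factors rather than from the $\zeta(1+s_1+s_2)$ pole). Getting this attribution right is precisely the ``delicate point'' you flagged, so it is worth straightening out before you do the computation in earnest. Your identification that the Euler product at the origin equals $a_{r+1}$ (because $\zeta\cdot H_2$ carries coefficients behaving like $d_{r+1}$) and that the residue structure produces $\Gamma(r)^2\Gamma(r^2)$ in the denominator is correct.
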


The next two lemmas concern the first ``square'' term and the ``cross'' term in the integrand of $\mathscr{M}_2(H,T;c)$.

\begin{lemm}
Suppose $0<\vartheta<\tfrac{1}{2}$. We have
\begin{eqnarray*}
\sum_{T\leq\gamma\leq 2T}H_1(\rho+i\alpha)H_1(1-\rho-i\alpha)&\sim&\frac{a_{r+1}TL(\log y)^{(r+1)^2}}{2\pi\Gamma((r+1)^{2})}\nonumber\\
&&\!\!\!\!\!\!\!\!\!\!\!\!\!\!\!\!\!\!\!\!\!\!\!\!\!\!\!\!\!\!\!\!\!\!\!\!\!\!\!\!\!\!\!\!\!\!\!\!\!\!\!\!\!\!\!\!\!\!\!\!\!\!\!\!\!\!\!\!\!\!\!\!\!\!\!\!\!\!\!\!\!\int_{0}^{1}(1-x)^{(r+1)^{2}-1}\bigg(P_{1}(x)^2-2\vartheta(r+1)P_1(x)\int_{0}^{x}\cos(\alpha\log y t)P_1(x-t)dt\bigg)dx.
\end{eqnarray*}
\end{lemm}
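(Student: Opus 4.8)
The plan is to compute the discrete second moment $\sum_{T\leq\gamma\leq 2T}H_1(\rho+i\alpha)H_1(1-\rho-i\alpha)$ by first converting it into a contour integral against $\frac{\zeta'}{\zeta}(s)$ and then reducing it, via the explicit formula / residue calculus, to a continuous mean value of the type already handled in Lemma 2.1. Concretely, I would start from
\begin{equation*}
\sum_{T\leq\gamma\leq 2T}H_1(\rho+i\alpha)H_1(1-\rho-i\alpha)=\frac{1}{2\pi i}\oint\frac{\zeta'}{\zeta}(s)H_1(s+i\alpha)H_1(1-s-i\alpha)\,ds,
\end{equation*}
the contour being the positively oriented rectangle with vertical sides $\Re s=1+\delta$ and $\Re s=-\delta$ and horizontal sides at heights $T$ and $2T$ (shifting the argument by $\alpha$ is harmless since $\alpha\ll 1/L$). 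The contributions of the horizontal segments are negligible by the standard bounds for $\zeta'/\zeta$ on GRH, and on the line $\Re s=-\delta$ one applies the functional equation to write $H_1(1-s-i\alpha)$ back in terms of a Dirichlet polynomial on the $1+\delta$ line; the two vertical integrals then combine, after moving contours to $\Re s=\tfrac12$, into
\begin{equation*}
\frac{1}{2\pi}\int_{T}^{2T}\Big(\log\frac{t}{2\pi}\Big)\,H_1(\tfrac12+i(t+\alpha))H_1(\tfrac12-i(t+\alpha))\,dt \;+\; (\text{off-diagonal terms}),
\end{equation*}
plus lower-order pieces. The leading weight $\log(t/2\pi)$ is what produces the extra factor $L$ and the $\frac{1}{2\pi}$ relative to Lemma 2.1.

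The main term of the diagonal part is then evaluated exactly as in Lemma 2.1: expanding $H_1(\tfrac12+i(t+\alpha))\overline{H_1(\tfrac12+i(t+\alpha))}=\sum_{m,n\leq y}\frac{d_{r+1}(m)d_{r+1}(n)P_1[m]P_1[n]}{(mn)^{1/2}}\big(\tfrac{n}{m}\big)^{i(t+\alpha)}$ and integrating in $t$, the diagonal $m=n$ reproduces $\frac{a_{r+1}T(\log y)^{(r+1)^2}}{\Gamma((r+1)^2)}\int_0^1(1-x)^{(r+1)^2-1}P_1(x)^2dx$ (this is precisely the $P_1(x)^2$ term, now carrying the extra $\frac{L}{2\pi}$). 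The off-diagonal contribution — which in the pure continuous moment of Lemma 2.1 was negligible — is here promoted to a genuine second main term by the presence of $\zeta'/\zeta$, i.e. by the von Mangoldt coefficients $\Lambda(k)$ arising from $-\zeta'/\zeta(s)=\sum_k\Lambda(k)k^{-s}$. Tracking the terms $mk=n$ (and $nk=m$), writing $\sum_{p}\Lambda(p)$-type sums as $\log$'s and using $\sum_{d_{r+1}(n)\cdots}$ Euler-product/Perron analysis, one is led to a sum over $m,n,p$ with $np=m$; the $\cos(\alpha\log yt)$ appears because the two conjugate factors $H_1(\rho+i\alpha)$ and $H_1(1-\rho-i\alpha)$ contribute phases $(\cdot)^{i\alpha}$ and $(\cdot)^{-i\alpha}$ that do \emph{not} cancel once $m\neq n$, and the shift in the running variable turns $(\frac{m}{n})^{i\alpha}+(\frac{n}{m})^{i\alpha}$ into $2\cos(\alpha\log(m/n))$; after the change of variables $m/n\leftrightarrow t$ inside the polynomial convolution this becomes $2\cos(\alpha\log y\,t)$. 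Carrying out the resulting divisor-sum asymptotics (a Perron-type argument, exactly parallel to the one used for Lemmas 2.1–2.2 in Section 5) yields the factor $-2\vartheta(r+1)P_1(x)\int_0^x\cos(\alpha\log yt)P_1(x-t)dt$, the combinatorial constant $\vartheta(r+1)$ coming from the one extra $\log y$ weight (the $\vartheta$) and from the $r+1$ ways the prime $p$ can be absorbed into $d_{r+1}$.

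I expect the main obstacle to be the rigorous treatment of the off-diagonal sum: one must show that only the terms with $n=mp$ (a single prime, or more precisely a prime power contributing $\Lambda$) survive to the main term, bound the contribution of proper prime powers and of $k$ with more than one prime factor, and — crucially — control everything uniformly in $\alpha$ for $|\alpha|\leq c/L$, since $\mathscr{M}_2$ integrates over $\alpha$. The range $\vartheta<\tfrac12$ is exactly what makes the length of the Dirichlet polynomial short enough that the shifted-contour argument closes and the error terms are genuinely lower order; keeping track of this, and of the uniformity of the Perron truncation in the $\alpha$-aspect, is the delicate point. Everything else — the residue bookkeeping, the passage from $\zeta'/\zeta$ to the $\log(t/2\pi)$ weight, and the final divisor-sum asymptotics — runs along the now-standard lines of Conrey–Ghosh–Gonek and Ng, and mirrors the computations deferred to Section 5.
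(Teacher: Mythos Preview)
Your outline would ultimately work, but it is not the route the paper takes, and it is considerably heavier than necessary. In Section~6 the paper does \emph{not} pass through a contour integral against $\zeta'/\zeta$; instead it expands $H_1(\rho+i\alpha)H_1(1-\rho-i\alpha)$ directly as a double Dirichlet sum and applies the Landau--Gonek explicit formula (Lemma~4.6),
\[
\sum_{T\leq\gamma\leq 2T}(h/k)^{\rho}=-\tfrac{T}{2\pi}\Lambda(h/k)+\text{(explicit error terms)},
\]
to the inner sum over zeros. The diagonal $h=k$ gives the $P_1(x)^2$ term via the zero-counting formula and Lemma~4.10; the off-diagonal is $2\Re(I_1)$, and the main piece $I_{11}$ is exactly the $\Lambda$-weighted divisor sum evaluated by Lemma~4.11, producing the $y^{i\alpha t}$ whose real part is the $\cos(\alpha\log y\,t)$. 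The remaining error terms $I_{12},I_{13},I_{14}$ are bounded by elementary counting (the $\langle h/k\rangle$ term requires a short argument writing $h=uk+v$). This is shorter than rederiving everything through the $\chi'/\chi$ mechanism, and it keeps the lemma \emph{unconditional}: note Remark~2.3 explicitly records that Lemmas~2.1--2.4 do not use GRH, whereas your sketch invokes GRH to bound $\zeta'/\zeta$ on the horizontal edges.

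Two further small points. First, $H_1$ is a Dirichlet polynomial and has no functional equation; on the left edge of your rectangle it is $\zeta'/\zeta$ to which one applies \eqref{6}, not $H_1$. Second, your reading of the constant $\vartheta(r+1)$ is slightly off: the factor $r+1$ comes from Lemma~4.7/4.11 (it is the $r$ in the second display of Lemma~4.7 with $r$ replaced by $r+1$, i.e.\ the derivative of $d_{r+1}$ at primes), and the $\vartheta$ arises simply because the extra $\log y$ produced by the off-diagonal sum is converted to $\vartheta L$ when measured against the overall $L$ in the main term.
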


\begin{lemm}
Suppose $0<\vartheta<\tfrac{1}{2}$. On GRH we have
\begin{eqnarray*}
\sum_{T\leq\gamma\leq 2T}\zeta H_2(\rho+i\alpha)H_1(1-\rho-i\alpha)\sim\frac{a_{r+1}TL(\log y)^{(r+1)^2}}{2\pi\Gamma(r+1)\Gamma(r(r+1))}\int_{0}^{1}(1-x)^{r(r+1)-1}A(r,\vartheta;x)dx,
\end{eqnarray*}
where
\begin{eqnarray*}
A(r,\vartheta;u)&=&(1-(i\alpha L)^{-1})Q_{r}(u)P_2(u)\nonumber\\
&&\qquad-\vartheta(r+1)\int_{0}^{u}y^{i\alpha t}Q_{r}(u-t)P_{2}(u)dt-\vartheta r\int_{0}^{u}y^{-i\alpha t}Q_{r}(u)P_{2}(u-t)dt\\
&&\qquad+\frac{T^{-i\alpha}}{i\alpha L}\int_{0}^{u}t^{r}y^{i\alpha t}P_1(u-t)\bigg(\sum_{n=1}^{r}\binom{r}{n}\frac{(i\alpha\log y)^{n}}{(n-1)!}R_{n-1}(u)+P_2(u)\bigg)dt.
\end{eqnarray*}
\end{lemm}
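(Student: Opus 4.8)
The plan is to evaluate the sum by the argument principle, shift the contour past the critical line, and invoke the functional equation of $\zeta$; this is the method of Conrey--Ghosh--Gonek \textbf{\cite{CGG2}} and Ng \textbf{\cite{Ng}}, and it is in the estimation of the resulting error terms that GRH will be needed. Fix $\alpha$ with $|\alpha|\leq c/L$ and put $\mathcal{F}(s)=\zeta(s+i\alpha)H_{2}(s+i\alpha)H_{1}(1-s-i\alpha)$, so that the summand is $\mathcal{F}(\rho)$. Since the only singularities of $\tfrac{\zeta'}{\zeta}(s)\mathcal{F}(s)$ with $T\leq\Im s\leq 2T$ are the zeros $\rho$, the argument principle gives
\begin{equation*}
\sum_{T\leq\gamma\leq 2T}\mathcal{F}(\rho)=\frac{1}{2\pi i}\oint_{\mathcal{C}}\frac{\zeta'}{\zeta}(s)\,\mathcal{F}(s)\,ds,
\end{equation*}
where $\mathcal{C}$ is the positively oriented rectangle with vertices $1+\tfrac{1}{L}+iT$, $1+\tfrac{1}{L}+2iT$, $-\tfrac{1}{L}+2iT$, $-\tfrac{1}{L}+iT$, the value of $T$ being perturbed so that the horizontal sides avoid the ordinates of zeros. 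On RH the horizontal sides are $O(T^{1/2+\vartheta+\varepsilon})=o(T)$, via the Lindel\"of bound and the functional equation for $\zeta$ together with the trivial bounds $|H_{1}|,|H_{2}|\ll y(\log y)^{O(1)}$; and the right side $\Re s=1+\tfrac{1}{L}$ is negligible, since on expanding $\tfrac{\zeta'}{\zeta}(s)$, $\zeta(s+i\alpha)$, $H_{2}(s+i\alpha)$ and $H_{1}(1-s-i\alpha)$ into their absolutely convergent Dirichlet series and integrating term by term, the diagonal $kmh=n\leq y$ contributes $\ll T(\log y)^{(r+1)^{2}}=o\big(TL(\log y)^{(r+1)^{2}}\big)$ and the off-diagonal is $\ll y(\log y)^{O(1)}=o(T)$ (here $\vartheta<\tfrac12$ is used). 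Thus the main term comes entirely from the left side.

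On $\Re s=-\tfrac{1}{L}$ I would substitute $\zeta(s+i\alpha)=\chi(s+i\alpha)\zeta(1-s-i\alpha)$ and $\tfrac{\zeta'}{\zeta}(s)=\tfrac{\chi'}{\chi}(s)-\tfrac{\zeta'}{\zeta}(1-s)$, using $\tfrac{\chi'}{\chi}(s)=-\log\tfrac{|t|}{2\pi}+O(|t|^{-1})$ --- the factor $-\log\tfrac{|t|}{2\pi}\sim-L$ is what produces the $L$ in the main term --- together with Stirling's formula for $\chi(s+i\alpha)$, retaining the lower-order terms needed for uniformity in $\alpha$. Now $\zeta(1-s-i\alpha)=\sum_{m}m^{s+i\alpha-1}$, $H_{2}(s+i\alpha)$ and $H_{1}(1-s-i\alpha)$ converge absolutely on this line and their product contributes $\tfrac{1}{mn}\big(\tfrac{mn}{h}\big)^{s+i\alpha}$; substituting and evaluating the resulting $t$-integrals by stationary phase (the saddle lies at $t_{0}=2\pi mn/h$, and only $t_{0}\in[T,2T]$ contributes) collapses the threefold sum, and summation over $m$ then leaves only the congruence diagonal $h\mid n$, the $h\nmid n$ terms being negligible by cancellation in the $m$-sum. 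Writing $n=h\ell$ one is left with
\begin{equation*}
\frac{TL}{2\pi}\sum_{\substack{h,\ell\geq 1\\ h\ell\leq y}}\frac{d_{r}(h)P_{2}[h]\,d_{r+1}(h\ell)P_{1}[h\ell]}{h\ell},
\end{equation*}
together with its $\alpha$-twisted companions (in which factors $\ell^{\pm i\alpha}$ are inserted) and boundary contributions from truncating $\zeta(1-s-i\alpha)$ at the saddle heights $t_{0}\sim T,\,2T$; these companions and boundary terms are precisely what carry the factors $y^{\pm i\alpha t}$ and $T^{-i\alpha}/(i\alpha L)$ in $A(r,\vartheta;u)$.

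Finally the divisor sum above, and its twisted companions, are evaluated by a two-variable Selberg--Delange (or contour) argument, exactly as in Lemma 2.4 and its $\zeta H_{2}$-analogue: the Euler product of the generating Dirichlet series yields the constant $a_{r+1}$, the Gamma-factors $\Gamma(r+1)\Gamma(r(r+1))$ come from the two levels $\ell$ and $h$ of the convolution $n=h\ell$ (contributing densities $t^{r}$ and $(1-x)^{r(r+1)-1}$ respectively), and rescaling the divisor variables logarithmically turns the polynomial weights $P_{1},P_{2}$ into $\int_{0}^{1}(1-x)^{r(r+1)-1}A(r,\vartheta;x)\,dx$, with the $\ell$-convolution collapsing to $Q_{r}$ and the twists $\ell^{\pm i\alpha}=y^{\pm i\alpha(\log\ell/\log y)}$ becoming the $y^{\pm i\alpha t}$; the $\binom{r}{n}$-combinatorics in the last line of $A$ comes from expanding the shifted divisor factors of $d_{r+1}(h\ell)$ relative to $d_{r}(h)$.

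The hard part is twofold. First, everything must be done with full uniformity in $\alpha$ down to $|\alpha|\asymp c/L$: since $\alpha L\asymp c$ is $O(1)$ and not $o(1)$, no factor $(i\alpha L)^{-1}$, $y^{\pm i\alpha t}$ or $T^{-i\alpha}$ may be thrown into the error term, and the several main and boundary contributions must be tracked exactly --- this is what makes $A(r,\vartheta;u)$ so intricate, and is the most delicate part of the bookkeeping. Second, and more seriously, the left-side integral after the functional equation sets $\zeta(1-s-i\alpha)$, of effective length $\asymp t/2\pi\asymp T$, against two Dirichlet polynomials of length $y=T^{\vartheta}$, so its off-diagonal is a divisor/prime correlation of effective length $\asymp T^{1+2\vartheta}$; estimating it satisfactorily requires GRH for Dirichlet $L$-functions, just as in \textbf{\cite{CGG2}} and \textbf{\cite{Ng}}. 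Granting this, the rest --- the Selberg--Delange asymptotics of the diagonal sum and the matching of the constants --- is routine in view of Lemma 2.4 and the theorem of Conrey and Ghosh cited for Lemma 2.3.
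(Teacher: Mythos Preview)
Your outline has the right overall architecture, but there is a concrete error that breaks the argument: your dismissal of the right edge $\Re s=1+\tfrac{1}{L}$ is wrong. You estimate the diagonal $kmh=n\leq y$ as $\ll T(\log y)^{(r+1)^{2}}$, but this misses a factor of $\log y$ coming from the von Mangoldt weight. Indeed, since $-\zeta'\zeta^{r}=-\tfrac{1}{r+1}(\zeta^{r+1})'$, one has $\sum_{kmh=n}\Lambda(k)d_{r}(h)=\tfrac{\log n}{r+1}\,d_{r+1}(n)$, so the diagonal is $\asymp T(\log y)^{(r+1)^{2}+1}=\vartheta\,TL(\log y)^{(r+1)^{2}}$, of the \emph{same} order as the main term. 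In the paper this right-edge integral is called $J_{1}$ and is evaluated exactly (Section~7.1, via Lemma~4.11); it is precisely the source of the term $-\vartheta(r+1)\int_{0}^{u}y^{i\alpha t}Q_{r}(u-t)P_{2}(u)\,dt$ in $A(r,\vartheta;u)$. With it discarded you cannot recover the stated formula.

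Your handling of the left edge is also rather different from, and vaguer than, the paper's. The paper substitutes $s\mapsto 1-s$, applies the functional equation only to $\tfrac{\zeta'}{\zeta}$, and splits the left edge into $\overline{J_{2}}-\overline{J_{3}}$. The piece $J_{3}$ carrying $\tfrac{\chi'}{\chi}\sim -L$ is reduced by integration by parts to the mean value of Lemma~2.2 and supplies the leading $Q_{r}(u)P_{2}(u)$ in $A$. The piece $J_{2}$, still weighted by $\chi(1-s)$, is not done by stationary phase at all: it is converted by Lemma~4.1 into sums $\sum_{h}a(h)e(-h/k)$, and these are evaluated by Perron's formula and a contour shift, picking up residues of the generating series $Q^{*}(s,\alpha,k)$ at $s=1$ and $s=1+i\alpha$ (Lemmas~4.3 and~4.5). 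GRH enters at exactly this point --- Lemma~4.5 --- to guarantee that $Q^{*}$ is holomorphic and $\ll y^{1/2}T^{\varepsilon}$ in $\sigma>\tfrac{1}{2}$ away from those two poles, so that the shifted contour is admissible. The residue at $s=1$ yields (after conjugation) the terms $-(i\alpha L)^{-1}Q_{r}(u)P_{2}(u)$ and $-\vartheta r\int_{0}^{u}y^{-i\alpha t}Q_{r}(u)P_{2}(u-t)\,dt$; the residue at $s=1+i\alpha$ yields the $T^{-i\alpha}/(i\alpha L)$ line, with the $\binom{r}{n}$-combinatorics arising from Lemma~4.12 on the sum $\sum_{k}d_{r}(km)g(k)/\varphi(km)$ rather than from any divisor-shift expansion. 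Your stationary-phase route may be workable, but as written it does not isolate these pieces, and the placement of the GRH input is left unspecified.
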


We prove Lemma 2.4 and Lemma 2.5 in Section 6 and Section 7, respectively. The second ``square'' term is given by Ng (cf. Theorem 2 \textbf{\cite{Ng}}).

\begin{lemm}
Suppose $0<\vartheta<\tfrac{1}{2}$. On GRH we have
\begin{eqnarray*}
\sum_{T\leq\gamma\leq 2T}|\zeta H_2(\rho+i\alpha)|^2\sim\frac{a_{r+1}TL(\log y)^{(r+1)^2}}{\pi\Gamma(r)^2\Gamma(r^{2})}\int_{0}^{1}(1-x)^{r^{2}-1}\Re\bigg({\sum_{j=1}^{\infty}(i\alpha\log y)^jB(r,\vartheta,j;x)}\bigg)dx,
\end{eqnarray*}
where
\begin{eqnarray*}
B(r,\theta,j;u)&=&-\frac{r}{j!}\int_{0}^{u}t^{j}R_{r-1}(u)R_{r-1}(u-t)dt\nonumber\\
&&\!\!\!\!\!\!\!\!\!\!\!\!\!\!\!\!\!\!\!\!\!\!\!\!\!\!\!\!\!\!\!\!\!\!\!+\frac{\theta r}{j!}\int_{0}^{u}t^{j}R_{r}(u)R_{r-1}(u-t)dt+\frac{\theta r}{j!}\int_{0}^{u}t^{j}R_{r-1}(u)R_{r}(u-t)dt\nonumber\\
&&\!\!\!\!\!\!\!\!\!\!\!\!\!\!\!\!\!\!\!\!\!\!\!\!\!\!\!\!\!\!\!\!\!\!\!-\theta\Gamma(r)\sum_{n=-2}^{\min\{j,r-2\}}\frac{(-1)^n\binom{r}{n+2}}{(j-n)!(r+n+1)!}\int_{0}^{u}t^{r-1}(\theta^{-1}-t)^{j-n}R_{r+n+1}(u)P_{2}(u-t)dt.
\end{eqnarray*}
\end{lemm}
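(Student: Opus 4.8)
Since $H_2(s)=\sum_{n\leq y}d_r(n)P_2[n]n^{-s}$ is precisely a mollifier of the shape treated in \textbf{\cite{Ng}}, the asserted formula is Theorem 2 of that paper with $P=P_2$, and the plan is to reproduce Ng's argument while carrying the general polynomial $P_2$ through the computation. Because $d_r(n)$ and the coefficients of $P_2$ are real, on the critical line $\overline{\zeta H_2(\tfrac12+it)}=\zeta H_2(\tfrac12-it)$, so writing $\rho=\tfrac12+i\gamma$ one has $|\zeta H_2(\rho+i\alpha)|^2=F(\rho)$ with $F(s):=\zeta H_2(s+i\alpha)\,\zeta H_2(1-s-i\alpha)$. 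By the argument principle,
\begin{equation*}
\sum_{T\leq\gamma\leq 2T}F(\rho)=\frac{1}{2\pi i}\oint_{\mathscr R}F(s)\frac{\zeta'}{\zeta}(s)\,ds,
\end{equation*}
where $\mathscr R$ is the positively oriented rectangle with vertices $1-c+iT$, $c+iT$, $c+2iT$, $1-c+2iT$ and $c=1+\tfrac1L$. The poles of the two $\zeta$-factors of $F$ sit at height $O(|\alpha|)$ and hence outside $\mathscr R$, and after moving $T$ and $2T$ a bounded amount off the ordinates the two horizontal sides contribute an admissible error, by the GRH bound $(\zeta'/\zeta)(\sigma+it)\ll(\log t)^{2-2\sigma}$ for $\sigma\geq\tfrac12$ together with a convexity bound for $\zeta H_2$ at the chosen heights.

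On the right edge $\Re s=c$ one inserts $(\zeta'/\zeta)(s)=-\sum_\ell\Lambda(\ell)\ell^{-s}$, applies the functional equation $\zeta(1-s-i\alpha)=\chi(1-s-i\alpha)\zeta(s+i\alpha)$ and the identity $\zeta(s+i\alpha)^2=\sum_m d_2(m)m^{-s-i\alpha}$, and writes $H_2(1-s-i\alpha)$ as a finite polynomial in $n^{s}$. The right-edge integral then becomes a vertical line integral of $\chi(1-s-i\alpha)$ against the Dirichlet series of $\zeta(s+i\alpha)^2H_2(s+i\alpha)H_2(1-s-i\alpha)\sum_\ell\Lambda(\ell)\ell^{-s}$; using the functional equation once more to trade the $\ell$-sum for a shorter dual sum, and then Perron's formula with several contour shifts, this reduces to the asymptotic evaluation of a twisted convolution sum $\sum\Lambda(\ell)d_r(m_1)d_r(m_2)(\cdots)$ carrying the polynomial weights $P_2[\cdot]$ and the oscillation $\ell^{\pm i\alpha}$. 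Divisor-correlation asymptotics of the kind behind Theorem 1 of \textbf{\cite{CG}} produce the factor $a_{r+1}(\log y)^{(r+1)^2}(1-x)^{r^2-1}/\Gamma(r)^2\Gamma(r^2)$, while expanding $\chi(1-s-i\alpha)$ and the factors $y^{\pm i\alpha t}$ in powers of the bounded quantity $\alpha\log y$ produces the series $\sum_{j\geq1}(i\alpha\log y)^jB(r,\vartheta,j;x)$; the $R_{r-1}R_{r-1}$ and mixed $R_rR_{r-1}$ integrals and the binomial sum $\sum_n(-1)^n\binom{r}{n+2}(\cdots)R_{r+n+1}$ with its $P_2$-term are exactly the residues left over from the successive contour shifts and the finite binomial expansions.

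On the left edge $\Re s=1-c$ one uses $(\zeta'/\zeta)(s)=(\chi'/\chi)(s)+\sum_\ell\Lambda(\ell)\ell^{s-1}$ together with the substitution $s\mapsto1-s$ and the functional equation for the $\zeta$-factors of $F$; the $(\chi'/\chi)$-piece, after Stirling, supplies one family of contributions and the reflected $\Lambda$-piece recasts the rest in the shape of the right edge with $\alpha\mapsto-\alpha$. Since $\sum_\gamma F(\rho)$ is real and vanishes when $\alpha=0$ (as $\zeta(\rho)=0$), the conjugate halves combine into the $\frac1\pi\Re(\cdot)$ of a series beginning at $j=1$, and the count $\#\{\gamma\in[T,2T]\}\sim\frac{TL}{2\pi}$ supplies the extra factor $L$ relative to Lemmas 2.1--2.3. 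Collecting the pieces gives the stated formula.

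The crux is the right-edge evaluation: obtaining the asymptotics of the twisted sum $\sum\Lambda(\ell)d_r(m_1)d_r(m_2)(\cdots)$ uniformly in the shift $\alpha$ requires a Perron truncation with power-saving error, careful contour shifting that tracks every residue, and meticulous bookkeeping of the polynomial convolutions in order to land precisely on $B(r,\vartheta,j;u)$. It is here that GRH is indispensable, being used to bound the tails of the $\Lambda$-sums and the contributions picked up on moving contours through the critical strip; the remaining steps are the lengthy but routine apparatus of evaluating mean values over the zeros by contour integration.
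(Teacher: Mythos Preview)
The paper does not prove this lemma; it simply quotes Theorem 2 of Ng \cite{Ng}, so your identification of the source and your decision to sketch Ng's argument are both appropriate. The high-level architecture you describe---write the sum as a contour integral of $F(s)\,\zeta'/\zeta(s)$, dispose of the horizontal edges, treat the two vertical edges separately, and use the reality of the sum together with the vanishing at $\alpha=0$ to combine conjugate pieces into $\Re\sum_{j\ge1}(\cdot)$---matches Ng's scheme and the paper's parallel treatment of Lemma 2.5 in Sections 3 and 7.

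Where your sketch goes wrong is in the mechanism of the right-edge evaluation, and this is exactly the part you flag as the crux. The phrase ``using the functional equation once more to trade the $\ell$-sum for a shorter dual sum'' does not describe what actually happens: the $\Lambda$-sum is never shortened by a duality step. Instead, after one application of the functional equation the integrand carries a factor $\chi(1-s)$, and the lemma of Conrey--Ghosh--Gonek (Lemma 4.1 here) converts the integral into a finite sum $\sum_{k\le y}(\cdots)\sum_h a(h)\,e(-h/k)$. The additive characters $e(-h/k)$ are then expanded into Dirichlet characters modulo divisors of $k$, and the generating Dirichlet series factor through products of Dirichlet $L$-functions and their logarithmic derivatives (compare Lemmas 4.3 and 4.5). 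It is GRH for these Dirichlet $L$-functions---not a tail bound on a $\Lambda$-sum over the integers---that allows the Perron contour to be pushed from $\sigma=1+L^{-1}$ to $\sigma=\tfrac12+L^{-1}$ with a power saving; the residues at $s=1$ and $s=1+i\alpha$ collected during that shift, together with the $\chi'/\chi$-piece from the left edge, are what eventually assemble into the three families of terms in $B(r,\vartheta,j;u)$. Your appeal to ``divisor-correlation asymptotics of the kind behind Theorem~1 of \cite{CG}'' is also misplaced: that input is what drives the continuous mean value in Lemma 2.3, not the discrete mean over zeros. So while the skeleton of your sketch is sound, the account of the key step misidentifies both the central reduction and the point at which GRH enters.
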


\begin{rem}
\emph{It is possible to establish these above lemmas for real $r\geq1$ by using the Selberg-Delange method (cf. Chapter II.5 \textbf{\cite{T}}). However, we are not going to elaborate in this direction here.}
\end{rem}

\begin{rem}
\emph{We note that Lemmas 2.1--2.4 are unconditional. Lemma 2.5 and Lemma 2.6, as mentioned in \textbf{\cite{Ng}}, can probably be proved only assuming the Generalized Lindel\"of Hypothesis by following the work of Conrey, Ghosh and Gonek \textbf{\cite{CGG3}}. Even this assumption may possibly be removed since an upper bound for the sixth moment of Dirichlet $L$-functions $L(s,\chi)$ on average is sufficient for the main theorem in \textbf{\cite{CGG3}}. If so, our Theorem 1 would hold on assuming only the Riemann Hypothesis.}
\end{rem}

In Section 8, we illustrate how our theorem follows from Lemmas 2.1--2.6. Throughout the paper, we denote $L=\log\tfrac{T}{2\pi}$, $e(x)=e^{2\pi ix}$. To facilitate the proofs of some lemmas, we sometimes allow $\alpha\in\mathbb{C}$. However, $\alpha$ is always restricted to $\alpha\ll L^{-1}$. We also assume that $y=T^{\vartheta}$, where $0<\vartheta<1/2$, and $r\geq1$.

\section{Initial manipulations for Lemma 2.5}

By Cauchy's theorem we have
\begin{equation*}
S_{12}=\sum_{T\leq\gamma\leq 2T}\zeta H_2(\rho+i\alpha)H_{1}(1-\rho-i\alpha)=\frac{1}{2\pi i}\int_{\mathscr{C}}\frac{\zeta'}{\zeta}(s-i\alpha)\zeta(s)H_{1}(1-s)H_{2}(s)ds,
\end{equation*}
where $\mathscr{C}$ is the positively oriented rectangle with vertices at $1-a+i(T+\alpha)$, $a+i(T+\alpha)$, $a+i(2T+\alpha)$ and $1-a+i(2T+\alpha)$. Here $a=1+L^{-1}$ and $T$ is chosen so that the distances from $T+\alpha$ and $2T+\alpha$ to the nearest $\gamma$ are $\gg L^{-1}$. Now for $s$ inside or on $\mathscr{C}$ we have
\begin{equation*}
H_1(s),H_2(s)\ll y^{1-\sigma}T^\varepsilon\qquad\textrm{and}\qquad \zeta(s)\ll T^{(1-\sigma)/2+\varepsilon}.
\end{equation*}
Also, for each large $T$, we can choose $T'$ such that $T-2<T'<T$, $T'+\alpha$ is not the ordinate of a zero of $\zeta(s)$ and $\zeta'(\sigma+iT')/\zeta(\sigma+iT')\ll L^2$, uniformly for $-1<\sigma<2$ (cf. \textbf{\cite{D}}). A simple argument using Cauchy's residue theorem then yields that the contribution of the bottom edge of the contour is $\ll yT^{1/2+\varepsilon}$. The same argument holds for the top edge. Hence the contribution from the horizontal lines is $O(yT^{1/2+\varepsilon}$).

We denote the contribution from the right edge by
\begin{equation}\label{3}
J_1(H_1,H_2)=\frac{1}{2\pi i}\int_{a+i(T+\alpha)}^{a+i(2T+\alpha)}\frac{\zeta'}{\zeta}(s-i\alpha)\zeta(s)H_{1}(1-s)H_{2}(s)ds.
\end{equation}
From the functional equation we have
\begin{equation}\label{6}
\frac{\zeta'}{\zeta}(1-s-i\alpha)=\frac{\chi'}{\chi}(1-s-i\alpha)-\frac{\zeta'}{\zeta}(s+i\alpha).
\end{equation}
Hence the contribution from the left edge, by substituting $s$ by $1-s$, is
\begin{eqnarray*}
&&\frac{1}{2\pi i}\int_{a-i(T+\alpha)}^{a-i(2T+\alpha)}\frac{\zeta'}{\zeta}(1-s-i\alpha)\zeta(1-s)H_{1}(s)H_{2}(1-s)ds\nonumber\\
&=&\frac{1}{2\pi i}\int_{a-i(T+\alpha)}^{a-i(2T+\alpha)}\chi(1-s)\bigg(\frac{\chi'}{\chi}(1-s-i\alpha)-\frac{\zeta'}{\zeta}(s+i\alpha)\bigg)\zeta(s)H_{1}(s)H_{2}(1-s)ds\nonumber\\
&=&-\overline{J_3(H_1,H_2)}+\overline{J_2(H_1,H_2)},
\end{eqnarray*}
where
\begin{equation}\label{30}
J_2(H_1,H_2)=\frac{1}{2\pi i}\int_{a+i(T+\alpha)}^{a+i(2T+\alpha)}\chi(1-s)\frac{\zeta'}{\zeta}(s-i\alpha)\zeta(s)H_{1}(s)H_{2}(1-s)ds,
\end{equation}
and
\begin{equation*}
J_3(H_1,H_2)=\frac{1}{2\pi i}\int_{a+i(T+\alpha)}^{a+i(2T+\alpha)}\frac{\chi'}{\chi}(1-s+i\alpha)\zeta(1-s)H_{1}(s)H_{2}(1-s)ds.
\end{equation*}
Thus
\begin{equation*}
S_{12}=J_1(H_1,H_2)-\overline{J_3(H_1,H_2)}+\overline{J_2(H_1,H_2)}+O(yT^{1/2+\varepsilon}).
\end{equation*}
The evaluations of $J_1$, $J_2$ and $J_3$ will be carried out in Section 7.

\section{Auxiliary lemmas}

In this section, we present all the lemmas which we will require for later calculations.  We recall a lemma from \textbf{\cite{CGG3}} (cf. Lemma 2).

\begin{lemm}
Suppose that $A(s)=\sum_{h=1}^{\infty}a(h)h^{-s}$, where $a(h)\ll d_{r_1}(h)(\log h)^{l_1}$ for some non-negative $r_1$ and $l_1$. Also let $B(s)=\sum_{k\leq y}b(k)k^{-s}$, where $b(k)\ll d_{r_2}(k)(\log k)^{l_2}$ for some non-negative $r_2$ and $l_2$. Then we have
\begin{eqnarray*}
&&\frac{1}{2\pi i}\int_{a+iT}^{a+i2T}\chi(1-s)A(s)B(1-s)ds=\sum_{k\leq y}\frac{b(k)}{k}\sum_{kT/2\pi\leq h\leq kT/\pi}a(h)e(-h/k)+O(yT^{1/2+\varepsilon}).
\end{eqnarray*} 
\end{lemm}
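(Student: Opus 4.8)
The plan is to reduce the contour integral on the right to a sum of Dirichlet coefficients by exploiting the rapid decay of $\chi(1-s)$ on the line $\Re s = a$ together with the standard approximation for $\chi(1-s)$ coming from Stirling's formula. First I would recall that on the vertical segment from $a+iT$ to $a+i2T$ one has the estimate $\chi(1-s) = (t/2\pi)^{s-1/2}e^{i(t+\pi/4)}(1+O(1/t))$ (with $s = a + it$), so that $|\chi(1-s)| \asymp t^{a-1/2}$ is large; this is what makes the integral converge to something of size $yT^{1/2}$ rather than vanishing. I would substitute the Dirichlet series $A(s) = \sum_h a(h)h^{-s}$ and $B(1-s) = \sum_{k\leq y} b(k) k^{s-1}$ into the integrand. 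Since $B$ is a finite sum (length $\leq y$) and the coefficients $a(h)$ grow only polynomially times a power of $\log$, the resulting double series, after moving the contour integral inside, can be handled term by term provided one is careful about the tail in $h$.

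The core of the argument is then the evaluation, for each fixed pair $(h,k)$, of
\begin{equation*}
\frac{1}{2\pi i}\int_{a+iT}^{a+i2T}\chi(1-s)\,\frac{a(h)b(k)}{k}\,\Big(\frac{k}{h}\Big)^{s}\,ds.
\end{equation*}
Inserting the asymptotic $\chi(1-s) \approx (t/2\pi)^{s-1/2}e^{i(t+\pi/4)}$ turns this into an exponential integral in $t$ whose phase is $t\log\frac{t}{2\pi} - t\log\frac{2\pi h}{k} + t$, i.e. the phase has a stationary point exactly when $t = 2\pi h/k$. By the stationary phase method (or, more cleanly, by recognizing this as the standard "approximate functional equation"/Mellin transform identity, e.g. via Lemma in \textbf{\cite{CGG3}} or a direct saddle-point computation), the term contributes $a(h)b(k)e(-h/k)/k$ precisely when the stationary point $2\pi h/k$ lies in the range $[T,2T]$, that is when $kT/2\pi \leq h \leq kT/\pi$, and is negligible otherwise. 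Summing over all such $(h,k)$ produces the main term $\sum_{k\leq y}\frac{b(k)}{k}\sum_{kT/2\pi\leq h\leq kT/\pi}a(h)e(-h/k)$.

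The main obstacle, and the place where the coefficient hypotheses $a(h)\ll d_{r_1}(h)(\log h)^{l_1}$, $b(k)\ll d_{r_2}(k)(\log k)^{l_2}$ are needed, is controlling the error terms uniformly: (i) the terms with $h$ outside a neighbourhood of the relevant range, where one integrates by parts in $t$ repeatedly and must sum the resulting bounds against the divisor-function weights without losing more than $T^\varepsilon$; (ii) the contribution of the $O(1/t)$ correction in Stirling's approximation for $\chi(1-s)$; and (iii) the endpoint contributions at $t=T$ and $t=2T$. For (i) one uses that $\sum_{h\leq X} d_{r_1}(h) \ll X(\log X)^{r_1-1}$ and the geometric-type decay in the non-stationary regime, for (iii) one chooses $T$ (as in the statement of Lemma 2.5's setup) to avoid pathological behaviour; since $k \leq y = T^\vartheta$ with $\vartheta < 1/2$, each of these error contributions is comfortably $\ll yT^{1/2+\varepsilon}$, which is why that is the size of the error term in the lemma. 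This is exactly Lemma 2 of \textbf{\cite{CGG3}}, so in the write-up I would simply cite it rather than reproduce the stationary-phase details.
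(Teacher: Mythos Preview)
Your proposal is correct and matches the paper's treatment: the paper does not prove this lemma at all but simply quotes it as Lemma~2 of \textbf{\cite{CGG3}}, which is exactly what you propose doing in your final sentence. Your stationary-phase sketch is the standard argument behind that lemma (the Gonek-type evaluation of $\int \chi(1-s)(k/h)^s\,ds$), so there is nothing to add.
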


\begin{lemm}
For $(h,k)=1$ with $k>0$, we define
\begin{equation*}
L(s,h/k)=\sum_{n=1}^{\infty}\frac{e(\frac{nh}{k})}{n^s}\qquad(\sigma>1).
\end{equation*}
Then $L(s,h/k)$ is regular in the entire complex plane except when $k=1$. For $k=1$ we have $L(s,h/k)=\zeta(s)$ and the function has a simple pole at $s=1$ with residue $1$.
\end{lemm}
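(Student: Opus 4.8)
The plan is to reduce $L(s,h/k)$ to a finite linear combination of Hurwitz zeta functions and then read off its analytic behaviour from the classical theory of the latter. First I would split the defining Dirichlet series (valid for $\sigma>1$) according to residue classes modulo $k$: writing each $n\geq1$ uniquely as $n=a+mk$ with $1\leq a\leq k$ and $m\geq0$, one gets
\begin{equation*}
L(s,h/k)=\sum_{a=1}^{k}e\big(\tfrac{ah}{k}\big)\sum_{m\geq0}\frac{1}{(a+mk)^{s}}=k^{-s}\sum_{a=1}^{k}e\big(\tfrac{ah}{k}\big)\,\zeta\big(s,\tfrac{a}{k}\big),
\end{equation*}
where $\zeta(s,x)=\sum_{m\geq0}(m+x)^{-s}$ denotes the Hurwitz zeta function. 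It is classical (via, for instance, the Euler--Maclaurin formula or a Hankel contour representation) that for each fixed $x\in(0,1]$ the function $\zeta(s,x)$ continues to a meromorphic function on $\mathbb{C}$ whose only singularity is a simple pole at $s=1$ with residue $1$. Since $k^{-s}$ is entire, the displayed identity immediately shows that $L(s,h/k)$ extends to a meromorphic function on $\mathbb{C}$ with at worst a simple pole at $s=1$.

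The remaining step is to compute the residue at $s=1$, which by the identity above equals $k^{-1}\sum_{a=1}^{k}e(ah/k)$. This is the only point at which the hypothesis $(h,k)=1$ is used: because $h$ is invertible modulo $k$, the map $a\mapsto ah\bmod k$ is a permutation of the residues modulo $k$, and therefore
\begin{equation*}
\sum_{a=1}^{k}e\big(\tfrac{ah}{k}\big)=\sum_{b=1}^{k}e\big(\tfrac{b}{k}\big)=\begin{cases}1,&k=1,\\ 0,&k>1,\end{cases}
\end{equation*}
the last line being the evaluation of a nontrivial geometric sum. Hence for $k>1$ the residue vanishes, so $L(s,h/k)$ is in fact entire; and for $k=1$ we have $e(nh/k)=e(nh)=1$ for every $n$ since $h\in\mathbb{Z}$, so $L(s,h/1)=\zeta(s)$, which has its usual simple pole at $s=1$ with residue $1$. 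This establishes all the assertions of the lemma.

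The argument is entirely classical and I do not expect any genuine obstacle; the one place that deserves a moment's care is the observation that, although each individual summand $\zeta(s,a/k)$ contributes a pole at $s=1$, these poles cancel in the full sum precisely when $k>1$, which is exactly what the geometric-sum evaluation records. As an alternative route avoiding the Hurwitz zeta function one could start from the representation $\Gamma(s)L(s,h/k)=\int_0^\infty x^{s-1}\big(e(-h/k)e^{x}-1\big)^{-1}\,dx$ for $\sigma>1$ and deform to a Hankel contour around the positive real axis: when $k>1$ the integrand is regular at $x=0$ (since $e(-h/k)\neq1$), which yields an entire continuation directly, whereas $k=1$ reproduces the standard contour integral for $\zeta(s)$ and its pole at $s=1$.
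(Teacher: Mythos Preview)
Your argument is correct. The paper itself does not give a proof: it simply declares the lemma ``trivial'' and remarks that $L(s,h/k)$ is a special case of the Lerch zeta-function, so your Hurwitz-zeta decomposition (and the alternative Hankel-contour route you sketch) is precisely the kind of standard verification the paper is tacitly invoking.
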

The proof of Lemma 4.2 is trivial. The $L$-function defined above is a special case of the Lerch zeta-function. 

\begin{lemm}
For $(h,k)=1$, we define
\begin{equation*}
Q(s,\alpha,h/k)=-\sum_{m,n=1}^{\infty}\frac{\Lambda(n)}{m^sn^{s-i\alpha}}e\bigg(\frac{-mnh}{k}\bigg)\qquad(\sigma>1).
\end{equation*}
Then $Q(s,\alpha,h/k)$ has a meromorphic continuation to the entire complex plane. For $\alpha\ne0$, $Q(s,\alpha,h/k)$ has\\
\emph{(i)} a simple pole at $s=1$ with residue
\begin{equation*}
\left\{ \begin{array}{ll}
\frac{\zeta'}{\zeta}(1-i\alpha) & \textrm{if $k=1$}\\
\frac{-\log p}{p^{(1-i\alpha)\lambda}(1-p^{-1+i\alpha})} & \textrm{if $k=p^\lambda>1$}\\
0 & \textrm{otherwise;}
\end{array} \right.
\end{equation*}
\emph{(ii)} a simple pole at $s=1+i\alpha$ with residue
\begin{equation*}
-\frac{\zeta(1+i\alpha)}{k^{i\alpha}\varphi(k)}\prod_{p|k}(1-p^{i\alpha}).
\end{equation*}
Moreover, on GRH, $Q(s,\alpha,h/k)$ is regular in $\sigma>1/2$ except for these two poles.
\end{lemm}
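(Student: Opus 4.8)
The plan is to express $Q(s,\alpha,h/k)$ in terms of $\zeta(s)$, Lerch zeta-functions and logarithmic derivatives of Dirichlet $L$-functions, whose analytic behaviour is classical, and then to read off the two poles. Throughout, I would first take $\sigma$ large enough that every rearrangement below is justified by absolute convergence, and deduce the stated continuation at the end. Summing over $m$ first gives $Q(s,\alpha,h/k)=-\sum_{n\ge1}\Lambda(n)n^{-(s-i\alpha)}L(s,-nh/k)$, where $L(s,x)=\sum_{m\ge1}m^{-s}e(mx)$ is $1$-periodic in $x$; since $(h,k)=1$, the quantity $L(s,-nh/k)$ depends on $n$ only modulo $k$, so sorting the $n$ into residue classes yields
\[
Q(s,\alpha,h/k)=-\sum_{j=0}^{k-1}L(s,-jh/k)\,D_j(s-i\alpha),\qquad D_j(w)=\sum_{\substack{n\ge1\\ n\equiv j\,(k)}}\frac{\Lambda(n)}{n^{w}},
\]
with the conventions $L(s,0)=\zeta(s)$ and $D_0(w)=\sum_{k\mid n}\Lambda(n)n^{-w}$.

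Next I would continue each factor separately. For $1\le j\le k-1$ the reduced denominator of $-jh/k$ is $k/(j,k)\ge2$, so Lemma 4.2 shows $L(s,-jh/k)$ is entire, while $L(s,0)=\zeta(s)$ is regular apart from a simple pole of residue $1$ at $s=1$. Since $\Lambda$ is supported on prime powers, $D_0(w)$ equals $-\zeta'/\zeta(w)$ if $k=1$, equals $(\log p)p^{-\lambda w}/(1-p^{-w})$ if $k=p^{\lambda}$, and vanishes if $k$ has two distinct prime factors; in all cases $D_0$ is meromorphic and, in $\Re w>1/2$, has a pole only at $w=1$ and only when $k=1$. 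If $(j,k)=1$ then orthogonality of the characters mod $k$ gives $D_j(w)=-\varphi(k)^{-1}\sum_{\chi\bmod k}\overline{\chi}(j)(L'/L)(w,\chi)$, which on GRH is holomorphic in $\Re w>1/2$ except for a simple pole of residue $1/\varphi(k)$ at $w=1$ (from the principal character, using $L(w,\chi_0)=\zeta(w)\prod_{p\mid k}(1-p^{-w})$ and $\overline{\chi_0}(j)=1$). If $(j,k)=d>1$ then $d$ is a prime power and the congruence $p^{a}\equiv j\,(k)$ confines $a$ to a finite set together with at most one arithmetic progression, so $D_j(w)$ is a finite combination of geometric series in $p^{-w}$, holomorphic for $\Re w>0$. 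Hence $Q$ continues meromorphically to $\mathbb{C}$, and on GRH its only poles with $\sigma>1/2$ are at $s=1$ (coming from $\zeta(s)=L(s,0)$) and at $s=1+i\alpha$ (coming from the pole of $D_j(s-i\alpha)$ at $s-i\alpha=1$); the remaining singularities — the zeros of $L(w,\chi)$ with $\chi\neq\chi_0$, placed by GRH on $\Re w=1/2$, and the poles of the geometric series on $\Re w=0$ — fall outside $\sigma>1/2$. (I take $\alpha$ real here; for $\alpha$ complex in the admissible range one reads $\sigma>1/2$ as $\Re(s-i\alpha)>1/2$, and the argument is unchanged.)

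Finally I would compute the residues, assuming $\alpha\neq0$ so the two poles are distinct. At $s=1$ only $L(s,0)=\zeta(s)$ is singular while $D_0(s-i\alpha)$ is regular, so the residue is $-D_0(1-i\alpha)=-\sum_{k\mid n}\Lambda(n)n^{-(1-i\alpha)}$, which equals $(\zeta'/\zeta)(1-i\alpha)$ for $k=1$, equals $-(\log p)p^{-\lambda(1-i\alpha)}/(1-p^{-1+i\alpha})$ for $k=p^{\lambda}$, and equals $0$ otherwise — this is part (i). At $s=1+i\alpha$ the singular factors are exactly the $D_j(s-i\alpha)$ with $(j,k)=1$, each of residue $1/\varphi(k)$, so the residue of $Q$ is $-\varphi(k)^{-1}\sum_{(j,k)=1}L(1+i\alpha,-jh/k)$. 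Replacing $jh$ by a general reduced residue mod $k$, the sum over $j$ of $e(-mjh/k)$ is the Ramanujan sum $c_k(m)$, so $\sum_{(j,k)=1}L(s,-jh/k)=\sum_{m\ge1}m^{-s}c_k(m)=\zeta(s)\sum_{d\mid k}\mu(k/d)d^{1-s}=\zeta(s)\,k^{1-s}\prod_{p\mid k}(1-p^{s-1})$; setting $s=1+i\alpha$ yields part (ii) (and recovers $-\zeta(1+i\alpha)$ when $k=1$). The main point requiring care — though nothing here is deep — is the bookkeeping: the case split according to whether $k$ is $1$, a prime power, or has several prime factors; the character decomposition together with the (easy but necessary) check that the terms with $(n,k)>1$ contribute nothing polar in $\Re w>1/2$; the correct invocation of GRH to push the zeros of $L(w,\chi)$ onto the critical line; and being consistent about the half-plane of absolute convergence in which the manipulations are performed before analytically continuing.
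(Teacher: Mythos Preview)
Your argument is correct and follows essentially the same route as the paper: both decompose $Q$ by sorting $n$ into residue classes modulo $k$, identify the pole at $s=1$ from the $\zeta(s)=L(s,0)$ factor and the pole at $s=1+i\alpha$ from the classes with $(j,k)=1$, and evaluate the latter residue via the Ramanujan sum identity $\sum_{m}c_k(m)m^{-s}=\zeta(s)\sum_{d\mid k}\mu(k/d)d^{1-s}$. Your treatment is a bit more explicit (character decomposition of $D_j$ for $(j,k)=1$, geometric-series description for $(j,k)>1$) where the paper simply cites the known continuation of $L(s,a,k)$; the only wording to tighten is that ``$d$ is a prime power'' should be read under the tacit hypothesis $D_j\neq0$, since otherwise $d=(j,k)$ can have several prime factors and $D_j$ vanishes identically.
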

\begin{proof}
For $\sigma>1$ we have
\begin{eqnarray}\label{18}
Q(s,\alpha,h/k)=\sum_{a=1}^{k}L(s,-ah/k)L(s-i\alpha,a,k)=\sum_{d|k}\sum_{a=1}^{k/d}{\!}^{*}L(s,-ahd/k)L(s-i\alpha,ad,k),
\end{eqnarray}
where $L(s,h/k)$ is the function defined in the previous lemma and
\begin{equation*}
L(s,a,k)=-\sum_{n\equiv a(\textrm{mod}\ k)}\Lambda(n)n^{-s},\qquad(\sigma>1),
\end{equation*}
with $\sum^{*}$ denotes summation over $a$ coprime to $k/d$. It is known that $L(s,a,k)$ has a meromorphic continuation to the entire complex plane and is regular on $\sigma=1$ except for a simple pole at $s=1$ if, and only if, $(a,k)=1$. Also, by Lemma 4.2, $L(s,-ahd/k)$ is regular everywhere except for a simple pole at $s=1$ (when $d=k$). Thus, by \eqref{18}, $Q(s,\alpha,h/k)$ has a meromorphic continuation to the entire complex plane and if $\alpha\ne0$, $Q(s,\alpha,h/k)$ has simple poles at $s=1$ and $s=1+i\alpha$.

From Lemma 4.2, the residue at $s=1$ is
\begin{equation*}
L(1-i\alpha,k,k)=-\sum_{n=1}^{\infty}\frac{\Lambda(kn)}{(kn)^{1-i\alpha}}=\left\{ \begin{array}{ll}
\frac{\zeta'}{\zeta}(1-i\alpha) & \textrm{if $k=1$}\\
\frac{-\log p}{p^{(1-i\alpha)\lambda}(1-p^{-1+i\alpha})} & \textrm{if $k=p^\lambda>1$}\\
0 & \textrm{otherwise.}
\end{array} \right.
\end{equation*}

To evaluate the residue at $s=1+i\alpha$, we note that in \eqref{18}, $L(s-i\alpha,ad,k)$ is regular on $\sigma=1$ unless $d=1$. In the case $d=1$, it has a pole at $s=1+i\alpha$ with residue $-1/\varphi(k)$. Hence the residue of $Q(s,\alpha,h/k)$ at $s=1+i\alpha$ is
\begin{equation*}
-\frac{1}{\varphi(k)}\sum_{a=1}^{k}{\!}^{*}L(1+i\alpha,-ah/k)=-\frac{1}{\varphi(k)}\sum_{n=1}^{\infty}\frac{c_{k}(n)}{n^{1+i\alpha}},
\end{equation*}
where $c_{k}(n)$ is the Ramanujan sum. From Titchmarsh \textbf{\cite{T1}}, this is equal to
\begin{equation*}
-\frac{\zeta(1+i\alpha)}{k^{i\alpha}\varphi(k)}\sum_{d|k}\mu(d)d^{i\alpha}=-\frac{\zeta(1+i\alpha)}{k^{i\alpha}\varphi(k)}\prod_{p|k}(1-p^{i\alpha}).
\end{equation*}
The lemma follows.
\end{proof}

We need a lemma to deal with product of several Dirichlet series (Lemma 3 of \textbf{\cite{CGG2}}).

\begin{lemm}
Suppose that $\mathcal{A}_j(s)=\sum_{n\geq1}\alpha_{j}(n)n^{-s}$ is absolutely convergent for $\sigma>1$, for $1\leq j\leq l$, and that
\begin{equation*}
\mathcal{A}(s)=\sum_{n=1}^{\infty}\frac{\alpha(n)}{n^s}=\prod_{j=1}^{l}\mathcal{A}_j(s).
\end{equation*}
Then for any positive integer $d$, we have
\begin{equation*}
\sum_{n=1}^{\infty}\frac{\alpha(nd)}{n^s}=\sum_{d_1\ldots d_l=d}\prod_{j=1}^{l}\bigg(\sum_{\substack{n\geq1\\(n,\prod_{i<j}d_i)=1}}\frac{\alpha_{j}(nd_j)}{n^s}\bigg).
\end{equation*}
\end{lemm}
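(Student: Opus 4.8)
The final statement to prove is Lemma 4.4 (the multiplicative-structure lemma for products of Dirichlet series, quoted as Lemma 3 of [CGG2]).

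The plan is to prove the identity by a purely formal/arithmetic manipulation of the Dirichlet coefficients, working in the region $\sigma>1$ where everything converges absolutely so that rearrangement is justified. First I would fix a positive integer $d$ and unfold the definition: $\alpha(nd)$ is the coefficient of $(nd)^{-s}$ in $\prod_{j=1}^l \mathcal{A}_j(s)$, so
\begin{equation*}
\alpha(nd)=\sum_{m_1\cdots m_l = nd}\prod_{j=1}^l \alpha_j(m_j).
\end{equation*}
The key step is to parametrize the factorizations $m_1\cdots m_l = nd$ by recording, for each prime $p$, how the part of $d$ supported at $p$ is distributed among the $m_j$. Concretely, write $d = d_1\cdots d_l$ for the portion of each $m_j$ that ``covers'' $d$; the remaining cofactor $m_j/d_j =: n_j$ then ranges over integers with $n_1\cdots n_l = n$. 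To make this decomposition canonical I would use a greedy/left-to-right rule: put as much of $d$ as possible into $m_1$, then into $m_2$, and so on. This forces the coprimality constraint $(n_j, \prod_{i<j} d_i)=1$: once the prime-power piece of $d$ has been exhausted by $m_1,\ldots,m_{j-1}$, the cofactor $n_j$ cannot contain any prime already ``used up'' in those slots without contradicting maximality of the earlier assignments.

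With this bijection in hand, the triple sum over $(n, \text{factorizations of } nd)$ reorganizes as a sum over ordered factorizations $d_1\cdots d_l = d$, and inside each such term the sum over the cofactors $n_j$ decouples into a product of the $l$ constrained inner sums $\sum_{(n,\prod_{i<j}d_i)=1}\alpha_j(n d_j) n^{-s}$. Summing the resulting expression against $n^{-s}$ and interchanging the (absolutely convergent) sums then yields exactly the claimed formula
\begin{equation*}
\sum_{n=1}^\infty \frac{\alpha(nd)}{n^s}=\sum_{d_1\cdots d_l = d}\prod_{j=1}^l\bigg(\sum_{\substack{n\geq 1\\ (n,\prod_{i<j}d_i)=1}}\frac{\alpha_j(n d_j)}{n^s}\bigg).
\end{equation*}
Conversely, one checks that every choice of $(d_1,\ldots,d_l)$ with $d_1\cdots d_l=d$ together with cofactors $n_j$ satisfying the coprimality conditions does arise from a unique factorization of $nd$ under the greedy rule, so the correspondence is genuinely a bijection and no overcounting occurs.

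I expect the main obstacle to be bookkeeping rather than anything deep: making the ``canonical factorization'' rule precise enough that the coprimality conditions $(n_j,\prod_{i<j}d_i)=1$ fall out cleanly and are seen to be both necessary and sufficient. It is cleanest to verify this prime-by-prime — since all the quantities are multiplicative in $d$ in the appropriate sense, it suffices to treat $d=p^a$ a prime power, where the statement reduces to: a factorization of $p^a \cdot (\text{$p$-free part})$ distributing the $p^a$ among $l$ slots corresponds to choosing exponents $e_1+\cdots+e_l=a$ with the constraint that slot $j$'s cofactor is prime-to-$p$ as soon as $e_1+\cdots+e_{j-1}=a$. Once this local case is nailed down, the general case follows by taking the product over primes dividing $d$, and convergence issues are nonexistent throughout because we never leave $\sigma>1$.
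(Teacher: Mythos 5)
The paper itself gives no proof of this lemma (it is quoted directly as Lemma 3 of \textbf{\cite{CGG2}}), so your proposal has to stand on its own. Your overall strategy --- parametrize the factorizations $m_1\cdots m_l=nd$ by a canonical splitting $m_j=n_jd_j$, verify it prime by prime, and decouple the sums using absolute convergence for $\sigma>1$ --- is the right one and is essentially the standard argument. However, the key step is wrong as written: the \emph{left-to-right} greedy rule does not produce the stated coprimality conditions. Take $l=2$, $d=p$, and the factorization $m_1=m_2=p$ of $nd=p^2$ (so $n=p$). Putting as much of $d$ as possible into $m_1$ gives $d_1=p$, $d_2=1$, $n_1=1$, $n_2=p$, and then $(n_2,d_1)=p\neq1$, violating the constraint $(n_j,\prod_{i<j}d_i)=1$. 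Your justifying sentence ("once the prime-power piece of $d$ has been exhausted by $m_1,\dots,m_{j-1}$, the cofactor $n_j$ cannot contain any prime already used up'') is exactly backwards: under the left-to-right rule, it is precisely when $p$'s share of $d$ has been exhausted by earlier slots that $d_j$ absorbs none of $m_j$'s $p$-part, so $n_j$ is then free to (and in general does) contain $p$. Maximality of the earlier assignments constrains only $m_1,\dots,m_{j-1}$, not $m_j$.

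The fix is to run the greedy rule from the right: set $d_l=(d,m_l)$, $d_{l-1}=(d/d_l,m_{l-1})$, and in general $d_j=(d/(d_{j+1}\cdots d_l),m_j)$, with $d_1$ the remaining cofactor. At a prime $p$ with $p^a\| d$ and $p^{b_j}\| m_j$ this gives exponents $e_j=\min(a-\sum_{k>j}e_k,\,b_j)$; if $e_j<b_j$ (i.e.\ $p\mid n_j$) then $\sum_{k\geq j}e_k=a$, forcing $e_i=0$ for all $i<j$, which is exactly $(n_j,\prod_{i<j}d_i)=1$. Conversely, the stated conditions force the allocation of each $p^a\| d$ to be ``full from the right'' ($e_j=b_j$ for every $j$ strictly beyond the smallest index with $e_j>0$), which determines $(d_1,\dots,d_l)$ uniquely from $(m_1,\dots,m_l)$; so the correspondence is a genuine bijection and the rest of your argument goes through. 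Relatedly, your local reformulation at the end (``prime-to-$p$ as soon as $e_1+\cdots+e_{j-1}=a$'') misstates the constraint, which requires $n_j$ prime to $p$ as soon as $e_1+\cdots+e_{j-1}>0$. As written, your left-to-right rule would prove the true but different identity in which $(n,\prod_{i>j}d_i)=1$ replaces $(n,\prod_{i<j}d_i)=1$.
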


The previous three lemmas lead to the following.

\begin{lemm}
Assume GRH. Let $k\in\mathbb{N}$ with $k\leq y$. We define
\begin{equation}\label{8}
Q^{*}(s,\alpha,k)=\sum_{h=1}^{\infty}\frac{a(h)e(-h/k)}{h^s},
\end{equation}
where
\begin{equation}\label{9}
a(h)=-\sum_{\substack{nuv=h\\n\leq y}}d_{r+1}(n)P_{1}[n]\Lambda(u)u^{i\alpha}.
\end{equation}
Then $Q^{*}(s,\alpha,k)$ has an analytic continuation to $\sigma>\tfrac{1}{2}$ except for possible poles at $s=1$ and $s=1+i\alpha$. Moreover we have
\begin{equation*}
Q^{*}(s,\alpha,k)\ll y^{1/2}T^{\varepsilon},
\end{equation*}
for $\tfrac{1}{2}+L^{-1}\leq\sigma\leq a$, $|t|\leq T$ and $|s-1|,|s-1-i\alpha|\gg1$.
\end{lemm}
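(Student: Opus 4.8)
The plan is to express $Q^*(s,\alpha,k)$ in terms of the functions $Q(s,\alpha,h/k')$ of Lemma 4.3 — which makes the analytic continuation immediate — and then to obtain the bound by expanding those functions in Dirichlet characters and applying a character mean value estimate, in the spirit of \textbf{\cite{CGG3}}. For $\sigma>1$, writing out the Dirichlet convolution in \eqref{9}, separating the variable $n$, and using $e(-nuv/k)=e(-n_1uv/k_1)$ with $g=(n,k)$, $n_1=n/g$, $k_1=k/g$, one gets, on comparing with the definition of $Q$ in Lemma 4.3,
\begin{equation*}
Q^*(s,\alpha,k)=\sum_{n\leq y}\frac{d_{r+1}(n)P_1[n]}{n^s}\,Q\Big(s,\alpha,\tfrac{n_1}{k_1}\Big).
\end{equation*}
Since this is a finite sum and, by Lemma 4.3, every $Q(s,\alpha,n_1/k_1)$ has meromorphic continuation to $\mathbb{C}$ with poles only possibly at $s=1$ and $s=1+i\alpha$, and is, on GRH, holomorphic for $\sigma>\tfrac12$ away from those two points, the claimed continuation of $Q^*(s,\alpha,k)$ follows.

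For the bound I would insert \eqref{18} into each $Q(s,\alpha,n_1/k_1)$, expand $L(s-i\alpha,a,k_1)$ by orthogonality of the characters modulo $k_1$, and carry out the sum over $a$; since $e(-amn_1/k_1)$ is an additive character, this produces a Gauss sum $\tau(\bar\psi)$ together with a full Dirichlet $L$-function $L(s,\psi)$. Grouping the $n$ according to $g=(n,k)\mid k$ (so that $k_1=k/g$ is fixed and $n=gn_1$ with $n_1\leq y/g$, $(n_1,k/g)=1$), one obtains $Q^*(s,\alpha,k)$ as a finite sum, over $g\mid k$ and $\psi\bmod k/g$, of the terms
\begin{equation*}
\frac{g^{-s}}{\varphi(k/g)}\,\psi(-1)\,\tau(\bar\psi)\Big(\tfrac{L'}{L}(s-i\alpha,\psi)+E_\psi\Big)L(s,\psi)\,M_{g,\psi}(s),\qquad M_{g,\psi}(s)=\sum_{\substack{n_1\leq y/g\\(n_1,k/g)=1}}\frac{d_{r+1}(gn_1)P_1[gn_1]\psi(n_1)}{n_1^s},
\end{equation*}
where $E_\psi$ absorbs the Euler-factor corrections at the primes dividing $k/g$, together with the analogous contributions of conductor $<k/g$ and of the prime-power residue classes in the $\Lambda$-sum, all of which are handled the same way and are of no greater size. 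On GRH, for $s$ in the stated range with $|s-1|,|s-1-i\alpha|\gg1$, one has $\tfrac{L'}{L}(s-i\alpha,\psi)\ll T^\varepsilon$ and $L(s,\psi)\ll T^\varepsilon$ uniformly for $\psi$ of conductor $\leq k\leq y\leq T$.

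It then remains to sum over $\psi$. For fixed $g$, put $q=k/g$; using the bounds just stated, Cauchy--Schwarz bounds the sum over $\psi$ by $T^\varepsilon\big(\sum_{\psi\bmod q}|\tau(\bar\psi)|^2\big)^{1/2}\big(\sum_{\psi\bmod q}|M_{g,\psi}(s)|^2\big)^{1/2}$. Here $\sum_{\psi\bmod q}|\tau(\bar\psi)|^2\leq\varphi(q)q$, while by character orthogonality
\begin{equation*}
\sum_{\psi\bmod q}|M_{g,\psi}(s)|^2=\varphi(q)\sum_{\substack{m_1\equiv m_2\ (\mathrm{mod}\ q)\\m_1,m_2\leq y/g,\ (m_1m_2,q)=1}}\frac{d_{r+1}(gm_1)P_1[\,\cdot\,]\,d_{r+1}(gm_2)P_1[\,\cdot\,]}{m_1^s\,m_2^{\bar s}}\ll\varphi(q)\Big(1+\frac{y}{gq}\Big)T^\varepsilon,
\end{equation*}
since the diagonal $m_1=m_2$ contributes $\ll\varphi(q)T^\varepsilon$ (because $\sigma\geq\tfrac12+L^{-1}$) and the off-diagonal, in which $m_1-m_2$ is a nonzero multiple of $q$, contributes $\ll\varphi(q)(y/gq)T^\varepsilon$ by an elementary divisor estimate. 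As $q=k/g\leq k\leq y$, the two square roots combine to $\ll\varphi(q)(y/g)^{1/2}T^\varepsilon$, which cancels the $\varphi(k/g)^{-1}$ in front, so the $g$-term is $\ll g^{-\sigma-1/2}y^{1/2}T^\varepsilon$; summing the convergent series $\sum_{g\mid k}g^{-\sigma-1/2}\ll T^\varepsilon$ gives $Q^*(s,\alpha,k)\ll y^{1/2}T^\varepsilon$.

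I expect the crux to be this last estimate. Bounding absolutely already at the level of the sum over $n$ — even with the sharp estimate $Q(s,\alpha,n_1/k_1)\ll k_1^{1/2}T^\varepsilon$ — only gives $Q^*(s,\alpha,k)\ll yT^\varepsilon$, which is too weak: the characters must be opened \emph{before} taking absolute values. The gain rests on the fact that, since $q=k/g\leq y$, the short polynomial $M_{g,\psi}$ has mean square only $\asymp\varphi(q)\,y/(gq)$ over $\psi\bmod q$ — the off-diagonal being genuinely present and of exactly this size — so that the $q^{1/2}$ coming from the Gauss-sum second moment is precisely offset by this $q^{-1/2}$, leaving $y^{1/2}$. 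The remaining ingredients — the uniform GRH bounds for $L'/L$ and $L$ down to $\sigma=\tfrac12+L^{-1}$, and the bookkeeping for imprimitive characters and for prime-power residue classes — are routine.
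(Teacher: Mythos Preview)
Your approach is correct and differs from the paper's in both steps. For the analytic continuation the paper does \emph{not} appeal to Lemma 4.3; instead it inserts the standard identity $e(-h/k)=\sum_{d\mid(h,k)}\varphi(k/d)^{-1}\sum_{\chi\bmod k/d}\tau(\bar\chi)\chi(-h/d)$ directly into \eqref{8}, expands $P_1$ as a polynomial, and then uses Lemma 4.4 to factor the resulting twisted series $A(s,d;z)$ as $\mathcal A_1\mathcal A_2\mathcal A_3$, where $\mathcal A_2=\chi(h)L(s,\chi)F(s,l,\chi)$ and $\mathcal A_3$ is essentially $L'/L(s-i\alpha,\chi)$ up to Euler factors; regularity of each factor on $\sigma>\tfrac12$ (away from $s=1,1+i\alpha$) then gives the continuation. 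Your route via $Q^*(s,\alpha,k)=\sum_{n\le y}d_{r+1}(n)P_1[n]n^{-s}Q(s,\alpha,n_1/k_1)$ is shorter and makes the pole structure transparent at once.

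The real divergence is in the bound. The paper estimates each factor $\mathcal A_j$ \emph{pointwise}, citing \textbf{\cite{CGG2}} and \textbf{\cite{Ng}} for the crucial dichotomy $\mathcal A_1\ll y^{1/2}T^\varepsilon$ if $\chi$ is principal and $\mathcal A_1\ll T^\varepsilon$ otherwise; the principal term is then saved by $|\tau(\chi_0)|\ll1$, and the non-principal terms by $|\tau(\chi)|\ll(k/d)^{1/2}$, giving $Q_j^*\ll y^{1/2}T^\varepsilon$ after summing over $d\mid k$. You instead keep the short polynomial $M_{g,\psi}$ intact and use Cauchy--Schwarz across $\psi\bmod q$ together with orthogonality; the identity $qg=k\le y$, i.e.\ $q\le y/g$, is exactly what turns $\sqrt{\varphi(q)q}\cdot\sqrt{\varphi(q)(1+y/(gq))}$ into $\varphi(q)(y/g)^{1/2}$. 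This buys you a self-contained argument that avoids the imported pointwise bounds, at the price of an elementary off-diagonal count. Both methods require GRH only for the uniform estimates $L(s,\psi),\,L'/L(s-i\alpha,\psi)\ll T^\varepsilon$ in the strip. The terms you bundle into $E_\psi$ (imprimitive characters, prime powers dividing $q$) do need to be written out, but they are of the same shape with strictly smaller conductor and cause no loss.
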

\begin{proof}
For $\chi$ a character (mod $k$), the Gauss sum $\tau(\chi)$ is given by
\begin{equation*}
\tau(\chi)=\sum_{h=1}^{k}\chi(h)e\bigg(\frac{h}{k}\bigg).
\end{equation*}
It is standard to show that
\begin{equation*}
e\bigg(\frac{-h}{k}\bigg)=\sum_{d|(h,k)}\frac{1}{\varphi(k/d)}\sum_{\chi(\textrm{mod}\ k/d)}\tau(\overline{\chi})\chi\bigg(\frac{-h}{d}\bigg).
\end{equation*}
Inserting this into \eqref{8} leads to
\begin{equation*}
Q^{*}(s,\alpha,k)=\sum_{d|k}\frac{1}{\varphi(k/d)d^s}\sum_{\chi(\textrm{mod}\ k/d)}\tau(\overline{\chi})\overline{\chi}(-d)A(s,d),
\end{equation*}
where
\begin{equation*}
A(s,d)=\sum_{h=1}^{\infty}\frac{a(hd)\chi(hd)}{h^s}\qquad(\sigma>1).
\end{equation*}
By expanding $P_1(x)=\sum_{j\geq0}c_jx^j$ in \eqref{9} we obtain
\begin{equation}\label{12}
Q^{*}(s,\alpha,k)=\sum_{j\geq0}\frac{c_j}{(\log y)^j}Q_{j}^{*}(s,\alpha,k),
\end{equation}
where
\begin{equation}\label{11}
Q_{j}^{*}(s,\alpha,k)=\sum_{d|k}\frac{1}{\varphi(k/d)d^s}\sum_{\chi(\textrm{mod}\ k/d)}\tau(\overline{\chi})\overline{\chi}(-d)\frac{\partial^{j}}{\partial z^j}A(s,d;z)|_{z=0},
\end{equation}
\begin{equation*}
A(s,d;z)=\sum_{h=1}^{\infty}\frac{a_z(hd)\chi(hd)}{h^s},\ \textrm{and}\ a_z(h)=-\sum_{\substack{nuv=h\\n\leq y}}\frac{d_{r+1}(n)y^z\Lambda(u)u^{i\alpha}}{n^z}.
\end{equation*}
Let 
\begin{equation*}
F(s,r,\chi)=\prod_{p|r}(1-\chi(p)p^{-s}).
\end{equation*}
We note that
\begin{equation*}
A(s,1;z)=\bigg(\sum_{n\leq y}\frac{\chi(n)d_{r+1}(n)y^z}{n^{s+z}}\bigg)L(s,\chi)\bigg(-\sum_{u=1}^{\infty}\frac{\chi(u)\Lambda(u)}{u^{s-i\alpha}}\bigg).
\end{equation*}
Hence, by Lemma 4.4,
\begin{equation*}
A(s,d;z)=\sum_{h_1h_2h_3=d}\mathcal{A}_{1}(s,h_1;z)\mathcal{A}_{2}(s,h_2,h_1)\mathcal{A}_{3}(s,h_3,h_1h_2),
\end{equation*}
where
\begin{eqnarray*}
&&\mathcal{A}_{1}(s,h;z)=\sum_{n\leq y/h}\frac{\chi(hn)d_{r+1}(hn)y^z}{n^{s}(hn)^z},\\
&&\mathcal{A}_{2}(s,h,l)=\sum_{(n,l)=1}\frac{\chi(hn)}{n^s}=\chi(h)L(s,\chi)F(s,l,\chi),\\
&&\mathcal{A}_{3}(s,h,l)=-\sum_{(n,l)=1}\frac{\chi(hn)\Lambda(hn)(hn)^{i\alpha}}{n^{s}}.
\end{eqnarray*}
It is obvious that $\mathcal{A}_{1}$ and $\mathcal{A}_{2}$ are regular everywhere except when $\chi$ is principal. In this case $\mathcal{A}_{2}$ has a simple pole at $s=1$. Also, assuming GRH, $\mathcal{A}_{3}$ is regular in $\sigma>1/2$, except for a possible simple pole at $s=1+i\alpha$. Thus, $A(s,d;z)$ is regular in $\sigma>1/2$ with the possible exception of poles at $s=1$ and $s=1+i\alpha$. Hence the required continuation of $Q^{*}(s,\alpha,k)$ follows.

To bound $Q^{*}(s,\alpha,k)$ we will need to bound $A(s,d;z)$. In the considered region we have
\begin{equation*}
\mathcal{A}_{j}(s,h,l)\ll T^{\varepsilon},
\end{equation*}
for $j=2,3$ and if $h,l$ divide $d$ (cf. (3.10) \textbf{\cite{CGG2}}), and (cf. (50) and (54) \textbf{\cite{Ng}})
\begin{equation*}
\mathcal{A}_{1}(s,h;z) \ll \left\{ \begin{array}{ll}
y^{1/2}T^{\varepsilon} & \textrm{if $\chi$ is principal}\\
T^{\varepsilon} & \textrm{otherwise.}
\end{array} \right.
\end{equation*}
Hence in the region under consideration we have
\begin{equation*}
A(s,d;z) \ll \left\{ \begin{array}{ll}
y^{1/2}T^{\varepsilon} & \textrm{if $\chi$ is principal}\\
T^{\varepsilon} & \textrm{otherwise,}
\end{array} \right.
\end{equation*}
uniformly for $|z|\ll L^{-1}$. Applying the Cauchy integral formula with a circle of radius $\asymp L^{-1}$ leads to
\begin{equation*}
\frac{\partial^{j}}{\partial z^j}A(s,d;z)|_{z=0} \ll \left\{ \begin{array}{ll}
y^{1/2}T^{\varepsilon} & \textrm{if $\chi$ is principal}\\
T^{\varepsilon} & \textrm{otherwise.}
\end{array} \right.
\end{equation*}
Combining this with \eqref{11} we obtain
\begin{eqnarray*}
Q_{j}^{*}(s,\alpha,k)&\ll&T^{\varepsilon}\sum_{d|k}\frac{1}{\varphi(k/d)d^{1/2}}\bigg(y^{1/2}|\tau(\chi_0)|+\sum_{\chi\ne\chi_0(\textrm{mod}\ k/d)}|\tau(\chi)|\bigg)\nonumber\\
&\ll&T^{\varepsilon}\bigg((y/k)^{1/2}\sum_{d|k}\frac{d^{1/2}}{\varphi(d)}+k^{1/2}\sum_{d|k}d^{-1}\bigg)\nonumber\\
&\ll&y^{1/2}T^{\varepsilon}.
\end{eqnarray*}
Thus, by \eqref{12} the lemma follows.
\end{proof}

We require the following version of the Landau-Gonek explicit formula \textbf{\cite{G}}.

\begin{lemm}
For $x>1$ we have
\begin{eqnarray*}
\sum_{T\leq\gamma\leq 2T}x^{\rho}&=&-\frac{T}{2\pi}\Lambda(x)+O(x\log(xT)\log\log x)\\
&&\ +O\bigg(\log x\min\bigg(T,\frac{x}{\langle x\rangle}\bigg)\bigg)+O\bigg(L\min\bigg(T,\frac{1}{\log x}\bigg)\bigg),
\end{eqnarray*}
where $\langle x\rangle$ denotes the distance from $x$ to the closest prime power other than $x$ itself, and $\Lambda(x)=\log p$ if $x$ is a positive integral power of a prime $p$ and $\Lambda(x)=0$ otherwise.
\end{lemm}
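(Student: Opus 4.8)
The plan is to obtain this by specializing Gonek's effective form of Landau's formula, Theorem~1 of \textbf{\cite{G}}, which states that for $x>1$ and $U>1$,
\[
\sum_{0<\gamma\leq U}x^{\rho}=-\frac{U}{2\pi}\Lambda(x)+O\big(x\log(xU)\log\log x\big)+O\Big(\log x\,\min\big(U,\tfrac{x}{\langle x\rangle}\big)\Big)+O\Big(\log U\,\min\big(U,\tfrac{1}{\log x}\big)\Big),
\]
uniformly in $x$ and $U$ (the precise constants inside the logarithms are immaterial). I would apply this with $U=2T$ and with $U=T$ and subtract: the two main terms combine to $-\tfrac{2T}{2\pi}\Lambda(x)+\tfrac{T}{2\pi}\Lambda(x)=-\tfrac{T}{2\pi}\Lambda(x)$, each error term at heights $T$ and $2T$ is already of the form claimed in the lemma (for $T$ large and $x>1$ one harmlessly replaces $\log(xU)$ and $\log U$ by $\log(xT)$ and $L$), and the $O(\log T)$ zeros with ordinate near $T$ or $2T$ contribute $\ll x\log T\ll x\log(xT)$, which is absorbed. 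So it remains to recall the proof of the displayed identity.

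That proof begins with the residue theorem:
\[
\sum_{0<\gamma\leq U}x^{\rho}=\frac{1}{2\pi i}\oint_{\mathscr{R}}\frac{\zeta'}{\zeta}(s)x^{s}\,ds+O(x),
\]
where $\mathscr{R}$ is the positively oriented rectangle with right edge on $\sigma=a$ and left edge on $\sigma=1-a$, running between heights slightly below $0$ and slightly above $U$; here $a=1+(\log x)^{-1}$ (replaced by a fixed abscissa when $x$ is bounded), the $O(x)$ accounts for the pole of $\zeta'/\zeta$ at $s=1$, and the horizontal levels are taken at ordinates $U'\in[U,U+1]$ for which $\zeta'/\zeta(\sigma+iU')\ll(\log U)^{2}$ holds for $-1<\sigma<2$ and no zero lies within $\gg(\log U)^{-1}$ (cf. \textbf{\cite{D}}). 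On the right edge one expands $\zeta'/\zeta(s)=-\sum_{n\geq1}\Lambda(n)n^{-s}$: the term $n=x$, present precisely when $x$ is a prime power, contributes the main term $-\tfrac{U'}{2\pi}\Lambda(x)$, and the remaining terms contribute $\ll\sum_{n\neq x}\Lambda(n)(x/n)^{a}\min(U,|\log(x/n)|^{-1})$. On the left edge one inserts the functional equation $\zeta'/\zeta(s)=\chi'/\chi(s)-\zeta'/\zeta(1-s)$: after $s\mapsto1-s$ the $\zeta'/\zeta(1-s)$ part becomes an absolutely convergent Dirichlet series weighted by $x^{1-s}$ with $\Re(1-s)>1$, hence is $O(1)$, while Stirling's formula gives $\chi'/\chi(1-a+it)=-\log(|t|/2\pi)+O(|t|^{-1})$, so that part is $\ll x^{1-a}\log U\cdot\min(U,1/\log x)$, which is the third error term. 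The horizontal edges, and the omission of the trivial zeros (achieved by stopping the contour at $\sigma=1-a$), are bounded within the stated error after the indicated choices of $U'$ and $a$.

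The step I expect to require the most care is the estimation of the off-diagonal sum $\sum_{n\neq x}\Lambda(n)(x/n)^{a}\min(U,|\log(x/n)|^{-1})$. I would first isolate the single term $n=n_{0}$, $n_{0}$ being the prime power nearest to $x$: this term is $\ll\log x\,\min(U,x/|x-n_{0}|)=\log x\,\min(U,x/\langle x\rangle)$, producing the second error term. For the other $n$ in the dyadic range $x/2<n<2x$ one uses $|\log(x/n)|^{-1}\ll x/|x-n|$ together with $\sum_{x/2<n<2x}\Lambda(n)\ll x$ and partial summation over $1\leq|x-n|<x$, which introduces the factor $x\log\log x$; the tails $n\leq x/2$ and $n\geq2x$ are bounded by $\ll x\log x$ via $\sum_{n}\Lambda(n)n^{-a}\ll\log x$. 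All of this lies within the first error term, and the rest of the argument, namely the vertical integral estimates and the passage from $U'$ back to $U$, is exactly as in the classical proof of Landau's formula.
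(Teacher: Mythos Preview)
Your proposal is correct and aligns with the paper's approach: the paper does not prove this lemma at all but simply cites it as ``the following version of the Landau-Gonek explicit formula \textbf{\cite{G}}'', so your derivation by subtracting Gonek's Theorem~1 at heights $U=2T$ and $U=T$ is exactly the intended step, and your additional sketch of the contour-integral proof of Gonek's theorem itself goes beyond what the paper supplies.
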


We also need various lemmas concerning divisor sums and other divisor-like sums. We first introduce some notation which we will use throughout. Let $D_r(n)=D_r(n,1)$, where
\begin{equation*}
D_{r}(n,s):=\bigg(\sum_{m=1}^{\infty}\frac{d_r(mn)}{m^s}\bigg)\zeta(s)^{-r}=\prod_{p^\lambda||n}\bigg(\bigg(1-\frac{1}{p^s}\bigg)^r\sum_{j=0}^{\infty}\frac{d_r(p^{j+\lambda})}{p^{js}}\bigg)\qquad(\sigma>1).
\end{equation*}
We define 
\begin{equation*}
F_\tau(n)=\prod_{p|n}(1+O(p^{-\tau})),
\end{equation*}
for $\tau>0$ and the constant in the $O$-term is implicit and independent of $\tau$. We note that
\begin{equation*}
D_{r}(n,s)\ll d_r(n)F_\tau(n)\qquad(\sigma\geq\tau>0).
\end{equation*}

\begin{lemm}
For $f\in C^{1}([0,1])$, there exists an absolute constant $\tau_0$ such that
\begin{eqnarray}
&&\!\!\!\!\!\!\!\!\!\!\!\!\!\!\!\!\!\!\!\!\!\!\!\!\sum_{n\leq y/k}\frac{d_{r}(kn)f[kn]}{n}=\frac{D_r(k)(\log y)^r}{\Gamma(r)}g_{r-1}[k]+O(d_r(k)F_{\tau_0}(k)L^{r-1}),\nonumber\\
&&\!\!\!\!\!\!\!\!\!\!\!\!\!\!\!\!\!\!\!\!\!\!\!\!\sum_{n\leq y/k}\frac{\Lambda(n)d_{r}(kn)f[kn]}{n^{1-i\alpha}}=rd_r(k)\log y\int_{0}^{\frac{\log y/k}{\log y}}y^{i\alpha t}f\bigg(\frac{\log y/k}{\log y}-t\bigg)dt+O(d_r(k)F_{\tau_0}(k)),\label{5}
\end{eqnarray}
and
\begin{eqnarray*}
\sum_{mn\leq y/k}\frac{\Lambda(n)d_{r}(kmn)f[kmn]}{mn^{1-i\alpha}}&=&\frac{rD_r(k)(\log y)^{r+1}}{\Gamma(r)}\int_{0}^{\frac{\log y/k}{\log y}}y^{i\alpha t}g_{r-1}\bigg(\frac{\log y/k}{\log y}-t\bigg)dt\nonumber\\
&&\qquad\qquad+O(d_r(k)F_{\tau_0}(k)L^{r}),
\end{eqnarray*}
where 
\begin{equation*}
g_{u}(x)=\int_{0}^{x}t^{u}f(x-t)dt.
\end{equation*}
\end{lemm}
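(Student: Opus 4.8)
# Proof Proposal for Lemma 4.8

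The plan is to prove all three identities by the same mechanism: express each divisor sum as a contour integral via Perron's formula, move the contour to pick up the main term from the pole at $s=1$, and bound the shifted contour using the factorization of the relevant Dirichlet series into $\zeta(s)^r$ (or $\zeta(s)^r$ times a $\zeta'/\zeta$ factor) times a correction Euler product that is absolutely convergent and $O(d_r(k)F_\tau(k))$ on the relevant half-plane. First I would handle the generating Dirichlet series. For the first sum, write $\sum_{n\ge1} d_r(kn) n^{-s} = D_r(k,s)\zeta(s)^r$; for the second, $\sum_{n\ge1}\Lambda(n)d_r(kn)n^{-s+i\alpha}$, and for the third, the product of a $1/n$-type series with the $\Lambda$-series. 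The key observation is that near $s=1$ the factor $D_r(k,s)$ is analytic and equals $D_r(k)(1+O((s-1)\log\text{(stuff)}))$, while $\zeta(s)^r$ contributes the pole of order $r$ whose residue produces the $(\log y)^r/\Gamma(r)$ factor against $g_{r-1}[k]$ (respectively $\log y$ times an integral for the $\Lambda$-sum, where a single simple pole appears since $-\zeta'/\zeta$ has a simple pole with residue $1$, but composed with $\zeta(s)^{r-1}$-type behavior coming from $d_r(kn)$ it is again order $r$ — more precisely one uses $\sum_\Lambda(n)d_r(n)n^{-s} \sim r\,\zeta(s)^{r-1}\cdot(-\zeta'/\zeta)(s)$ locally, giving $r\cdot(\log y)$ as the leading power after the standard truncation).

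The second step is the standard truncated Perron formula: for a smoothly weighted version, $\sum_{n\le X} c(n) = \frac{1}{2\pi i}\int_{(c)} \mathcal{C}(s)\frac{X^s}{s}\,ds$ plus an error, but here the weight $f[kn] = f\!\left(\frac{\log y/kn}{\log y}\right)$ is itself smooth in $\log n$, so it is cleaner to write $f[kn]$ via its own representation. Following the device used repeatedly in Ng and Conrey–Ghosh–Gonek, I would expand: since $f\in C^1$, write $f\!\left(\frac{\log y/kn}{\log y}\right)$ and note $\sum_{n\le y/k} c(n) f[kn] = \frac{1}{2\pi i}\int \mathcal{C}(s)\, y^{s}\, \widehat{f}(s)\, ds$ where the kernel $\widehat f(s)$ is essentially $\int_0^1 e^{-w s\log y}(\cdots)\,dw$ up to the $k^{-s}$ twist; the pole at $s=1$ of $\mathcal{C}(s)$ of order $r$ then yields, by the residue calculation, exactly the Beta-type integral $g_{r-1}(x) = \int_0^x t^{r-1} f(x-t)\,dt$ evaluated at $x = \frac{\log y/k}{\log y}$, with the $1/\Gamma(r)$ normalization from the $(r-1)$-fold derivative. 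For the $\Lambda$-sums the extra $y^{i\alpha t}$ factor in the integral comes from the $n^{i\alpha}$ twist shifting the pole location and contributing $e(\cdot)$, i.e. $y^{i\alpha t}$, inside the $t$-integral.

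The third step is the error analysis. On the contour $\sigma = 1 - c/L$ (or wherever we stop), $\zeta(s)^r \ll L^{r}$-ish but we gain $y^{\sigma} = y^{1-c/L}$, which is a saving of $e^{-c\vartheta\log T / \log T}\cdot$... — more carefully, one uses the classical zero-free region / the GRH-type bounds only where needed (but the statement is for general $f$ and does not assume GRH, so here the classical de la Vallée Poussin region suffices), and the correction Euler product is bounded by $d_r(k) F_\tau(k)$ uniformly, which is where the $F_{\tau_0}(k)$ factor in every error term originates; the constant $\tau_0$ is just how far left the Euler product $D_r(k,s)$ continues with the stated bound. The horizontal segments contribute lower-order terms by the standard convexity bounds for $\zeta$. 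The main obstacle — or at least the most delicate bookkeeping — is the residue computation at the pole of order $r$: one must verify that extracting the $(r-1)$-st Taylor coefficient of $D_r(k,s)\,\widehat f(s)$ in $(s-1)$ reproduces precisely $\frac{D_r(k)}{\Gamma(r)}g_{r-1}[k](\log y)^r$ (and the analogous convolution $\int_0^{x} y^{i\alpha t} g_{r-1}(x-t)\,dt$ for the third identity, which is just the composition of two such Beta integrals), keeping track of the $1/s$ kernel factor and the $k^{-s}$ twist, which contributes $D_r(k)$ rather than $D_r(k,1)$ times lower-order $k$-dependence absorbed into the error. Once the order-$r$ pole residue is pinned down for the first identity, the second follows from the same computation with $r$ replaced by the single simple pole of $-\zeta'/\zeta$ dressed by the $d_r$-twist (hence the clean factor $r d_r(k)\log y$ with no $\Gamma$), and the third is the Dirichlet convolution of the first and second, proved by the same contour with a double application of the residue, or equivalently by partial summation combining the two previous results.
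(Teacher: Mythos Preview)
Your contour-integral approach is a valid route, but the paper proceeds much more elementarily. Instead of Perron and a residue computation at an order-$r$ pole, the paper \emph{quotes} the unweighted asymptotic
\[
\sum_{n\le y}\frac{d_r(kn)}{n}=\frac{D_r(k)(\log y)^r}{\Gamma(r+1)}+O\bigl(d_r(k)F_{\tau_0}(k)L^{r-1}\bigr)
\]
from Conrey--Ghosh and then inserts the smooth weight $f[kn]$ by Stieltjes integration (partial summation), which immediately produces the $g_{r-1}[k]$ integral. For the second identity the paper is even more direct: since $\Lambda(n)$ is supported on prime powers, one observes that the contribution of $n=p^\lambda$ with $\lambda\ge 2$ or $p\mid k$ is $O(d_r(k)F_{\tau_0}(k))$, and for the remaining primes $p\nmid k$ one has $d_r(kp)=r\,d_r(k)$ exactly, so the sum collapses to $r\,d_r(k)\sum_{p\le y/k}(\log p)\,p^{-1+i\alpha}f[kp]$; the prime number theorem plus partial summation then gives the stated integral with error $O(d_r(k))$. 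The third identity is declared ``similar'' (and indeed follows by combining the two mechanisms). Your approach would certainly work, but the bookkeeping you flag as the main obstacle---extracting the correct $(\log y)^r/\Gamma(r)$ from an order-$r$ pole while controlling the shifted contour to save exactly one power of $L$---is precisely what the paper sidesteps by outsourcing that computation to a cited lemma and then doing only real-variable partial summation. The paper's method is shorter and avoids any complex analysis here; yours is self-contained but would need a Hankel contour or Selberg--Delange-type argument to nail down the $L^{r-1}$ error cleanly.
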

\begin{proof}
We note that (cf. Lemma 4 \textbf{\cite{CG}})
\begin{equation*}
\sum_{n\leq y}\frac{d_{r}(kn)}{n}=\frac{D_r(k)(\log y)^r}{\Gamma(r+1)}+O(d_r(k)F_{\tau_0}(k)L^{r-1}),
\end{equation*}
uniformly for all $k$. Hence by Stieltjes integration we have
\begin{eqnarray*}
\sum_{n\leq y/k}\frac{d_{r}(kn)f[kn]}{n}=\frac{D_r(k)}{\Gamma(r)}\int_{1}^{y/k}\frac{(\log \eta)^{r-1}}{\eta}f[k\eta]d\eta+O(d_r(k)F_{\tau_0}(k)L^{r-1}).
\end{eqnarray*}
Substituting $\log \eta/\log y=t$, the first statement of the lemma follows.

We will now only prove the second statement as the last statement is similar. We note that the terms for which $n=p^\lambda$, where $\lambda\geq2$, or $n$ is a prime divisor of $k$ may be included in the error term. So
\begin{equation*}
\sum_{n\leq y/k}\frac{\Lambda(n)d_{r}(kn)f[kn]}{n^{1-i\alpha}}=rd_{r}(k)\sum_{p\leq y/k}\frac{\log p}{p^{1-i\alpha}}f[kp]+O(d_r(k)F_{\tau_0}(k)).
\end{equation*}
By the prime number theorem and Stieltjes integration, the above main term is
\begin{equation*}
rd_{r}(k)\int_{1}^{y/k}\frac{1}{\eta^{1-i\alpha}}f[k\eta]d\eta+O(d_r(k)F_{\tau_0}(k)).
\end{equation*}
We obtain \eqref{5} by the substitution $\log \eta/\log y=t$.
\end{proof}

We need a lemma concerning the size of the function $F_{\tau_0}(n)$ on average. 

\begin{lemm}
For any $\tau_0>0$, we have
\begin{equation*}
\sum_{k\leq y}\frac{d_{r_1}(k)d_{r_2}(k)F_{\tau_0}(k)}{k}\ll L^{r_1r_2}.
\end{equation*}
\end{lemm}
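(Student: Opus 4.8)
The plan is to dominate the sum by a non\-negative multiplicative function and then bound it by the corresponding Euler product. First, since $|F_{\tau_0}(k)|\le\prod_{p\mid k}(1+Cp^{-\tau_0})$ for some absolute constant $C$, and $d_{r_1}(k),d_{r_2}(k)\ge0$, it suffices to estimate $\sum_{k\le y}g(k)/k$, where
\[
g(k)=d_{r_1}(k)\,d_{r_2}(k)\prod_{p\mid k}\bigl(1+Cp^{-\tau_0}\bigr)
\]
is non\-negative and multiplicative, with $g(1)=1$ and $g(p^{j})=d_{r_1}(p^{j})d_{r_2}(p^{j})\bigl(1+O(p^{-\tau_0})\bigr)$ for $j\ge1$, the implied constant depending only on $C$. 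Since every $k\le y$ has all its prime factors at most $y$, non\-negativity gives the majorization
\[
\sum_{k\le y}\frac{g(k)}{k}\ \le\ \prod_{p\le y}\Bigl(\sum_{j\ge0}\frac{g(p^{j})}{p^{j}}\Bigr),
\]
and each inner series converges because $g(p^{j})$ grows only polynomially in $j$ (with constant depending on $r_1,r_2$) while $p^{-j}$ decays geometrically.

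Next I would estimate the local factors. The key input is the classical identity $\sum_{n\ge1}d_{r_1}(n)d_{r_2}(n)n^{-s}=\zeta(s)^{r_1r_2}G(s)$, where $G$ is given by an Euler product absolutely convergent for $\sigma>\tfrac12$; this is of the same nature as the fact that makes the product $a_{r}$ of Lemma 2.1 converge. The Euler factor of this identity at $p$ reads $\sum_{j\ge0}d_{r_1}(p^{j})d_{r_2}(p^{j})p^{-js}=(1-p^{-s})^{-r_1r_2}(1+O(p^{-2s}))$, so evaluating at $s=1$ gives $\sum_{j\ge0}d_{r_1}(p^{j})d_{r_2}(p^{j})p^{-j}=(1-1/p)^{-r_1r_2}(1+O(p^{-2}))$, the implied constant depending only on $r_1,r_2$. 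Consequently
\[
\sum_{j\ge0}\frac{g(p^{j})}{p^{j}}=1+\bigl(1+O(p^{-\tau_0})\bigr)\Bigl(\frac{r_1r_2}{p}+O(p^{-2})\Bigr)=1+\frac{r_1r_2}{p}+O\bigl(p^{-1-\sigma_0}\bigr),
\]
where $\sigma_0=\min\{\tau_0,1\}>0$.

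To conclude, taking logarithms gives $\log\sum_{j\ge0}g(p^{j})p^{-j}=r_1r_2/p+O(p^{-1-\sigma_0})$, so by Mertens' theorem and the convergence of $\sum_p p^{-1-\sigma_0}$,
\[
\log\prod_{p\le y}\Bigl(\sum_{j\ge0}\frac{g(p^{j})}{p^{j}}\Bigr)=r_1r_2\sum_{p\le y}\frac1p+O(1)=r_1r_2\log\log y+O(1).
\]
Exponentiating, $\prod_{p\le y}(\,\cdot\,)\ll(\log y)^{r_1r_2}$, and since $y=T^{\vartheta}$ with $\vartheta$ fixed we have $\log y=\vartheta L\asymp L$, so that $\sum_{k\le y}d_{r_1}(k)d_{r_2}(k)F_{\tau_0}(k)/k\ll L^{r_1r_2}$, as claimed.

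The only step that needs genuine care is the local estimate, namely that $\sum_n d_{r_1}(n)d_{r_2}(n)n^{-s}$ and $\zeta(s)^{r_1r_2}$ differ by a Dirichlet series convergent for $\sigma>\tfrac12$ (equivalently, that the Ramanujan\-type correction at each prime is $1+O(p^{-2})$ uniformly in $p$); everything else is routine bookkeeping. For integer $r_1,r_2\ge1$ one can avoid even this by invoking the elementary inequality $d_{r_1}(n)d_{r_2}(n)\le d_{r_1r_2}(n)$, which reduces matters to bounding $\sum_{k\le y}d_{r_1r_2}(k)\prod_{p\mid k}(1+Cp^{-\tau_0})/k$, whose local factor $\sum_{j\ge0}d_{r_1r_2}(p^{j})p^{-j}=(1-1/p)^{-r_1r_2}$ is exact, the remainder of the argument being unchanged.
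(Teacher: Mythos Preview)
Your proof is correct but takes a different route from the paper's. The paper expands the majorant $\prod_{p\mid k}(1+Ap^{-\tau_0})=\sum_{n\mid k}A^{w(n)}n^{-\tau_0}$ (with $w(n)$ the number of prime divisors of $n$), interchanges the sums over $k$ and $n$, uses $d_{r_i}(kn)\le d_{r_i}(k)d_{r_i}(n)$, and then invokes the standard bound $\sum_{k\le y}d_{r_1}(k)d_{r_2}(k)/k\ll L^{r_1r_2}$ as a black box; the leftover sum $\sum_{n}A^{w(n)}d_{r_1}(n)d_{r_2}(n)n^{-1-\tau_0}$ converges because its general term is $\ll n^{-1-\tau_0/2}$. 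You instead dominate the whole sum by the truncated Euler product $\prod_{p\le y}\sum_{j\ge0}g(p^j)p^{-j}$, compute the local factors as $1+r_1r_2/p+O(p^{-1-\sigma_0})$, and finish with Mertens' theorem. Both arguments are short; the paper's is marginally quicker if one takes the divisor-sum estimate for granted, while yours is more self-contained and effectively rederives that estimate along the way. Your closing remark about the shortcut via $d_{r_1}(n)d_{r_2}(n)\le d_{r_1r_2}(n)$ is not used in the paper.
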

\begin{proof}
We have
\begin{equation*}
F_{\tau_0}(k)\leq\prod_{p|k}(1+Ap^{-\tau_0})=\sum_{n|k}n^{-\tau_0}A^{w(n)}
\end{equation*}
for some $A>0$, where $w(d)$ is the number of prime factors of $d$. Hence
\begin{eqnarray*}
\sum_{k\leq y}\frac{d_{r_1}(k)d_{r_2}(k)F_{\tau_0}(k)}{k}&\ll&\sum_{n\leq y}\frac{A^{w(n)}}{n^{1+\tau_0}}\sum_{k\leq y/n}\frac{d_{r_1}(kn)d_{r_2}(kn)}{k}\nonumber\\
&\ll&L^{r_1r_2}\sum_{n\leq y}\frac{A^{w(n)}d_{r_1}(n)d_{r_2}(n)}{n^{1+\tau_0}}\nonumber\\
&\ll&L^{r_1r_2},
\end{eqnarray*}
since $A^{w(n)}d_{r_1}(n)d_{r_2}(n)\ll n^{\tau_0/2}$ for sufficiently large $n$.
\end{proof}

\begin{lemm}
We have
\begin{equation*}
\sum_{k\leq y}\frac{d_{r}(k)^2}{k}=\frac{a_{r}(\log y)^{r^2}}{\Gamma(r^{2}+1)}+O(L^{r^2-1}),
\end{equation*}
and
\begin{equation*}
\sum_{k\leq y}\frac{D_{r+1}(k)d_{r}(k)}{k}=\frac{a_{r+1}(\log y)^{r(r+1)}}{\Gamma(r(r+1)+1)}+O(L^{r(r+1)-1}).
\end{equation*}
Also let
\begin{equation*}
A(n)=\prod_{p|n}(1-p^{-(1+i\alpha)}).
\end{equation*}
Then 
\begin{equation*}
\sum_{k\leq y}\frac{D_{r+1}(k)d_{r}(k)A(k)}{\varphi(k)}=\frac{a_{r+1}(\log y)^{r(r+1)}}{\Gamma(r(r+1)+1)}+O(L^{r(r+1)-1}).
\end{equation*}
\end{lemm}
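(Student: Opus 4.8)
The plan is to treat all three sums by a single device. In each case the summand is $c(k)/k$ for a multiplicative function $c$ whose Dirichlet series factors as
\[
\sum_{k\geq1}\frac{c(k)}{k^{s}}=\zeta(s)^{m}H(s),
\]
where $m=r^{2}$ for the first sum and $m=r(r+1)$ for the other two, and $H(s)=\sum_{d\geq1}h(d)d^{-s}$ (with $h$ multiplicative) converges absolutely in $\sigma>\tfrac12$; in particular $\sum_{d}|h(d)|(\log d)^{j}d^{-1}<\infty$ for every $j\geq0$. Granting this, write $c=h\ast d_{m}$ (Dirichlet convolution), so that
\[
\sum_{k\leq y}\frac{c(k)}{k}=\sum_{d\leq y}\frac{h(d)}{d}\sum_{e\leq y/d}\frac{d_{m}(e)}{e}.
\]
Inserting the standard estimate $\sum_{e\leq z}d_{m}(e)/e=(\log z)^{m}/\Gamma(m+1)+O\bigl((\log z)^{m-1}+1\bigr)$ ($z\geq1$), expanding $(\log(y/d))^{m}$ in powers of $\log y$, and summing over $d$ --- the lower-order contributions being absorbed via the bound on $\sum_{d}|h(d)|(\log d)^{j}d^{-1}$ --- one obtains
\[
\sum_{k\leq y}\frac{c(k)}{k}=\frac{H(1)(\log y)^{m}}{\Gamma(m+1)}+O(L^{m-1}).
\]
(One could instead bypass the convolution and evaluate each sum by Perron's formula, moving the contour slightly past $\sigma=1$ and bounding the shifted line via the classical zero-free region; the main term is then the residue at $w=0$ of $\zeta(1+w)^{m}H(1+w)y^{w}/w$.) It remains to establish the factorisations and evaluate $H(1)$.

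For the first sum, $c(k)=d_{r}(k)^{2}$: since $d_{r}(p)=r$ one has $\sum_{k}d_{r}(k)^{2}k^{-s}=\zeta(s)^{r^{2}}H_{1}(s)$ with
\[
H_{1}(s)=\prod_{p}\Bigl((1-p^{-s})^{r^{2}}\sum_{j\geq0}\frac{d_{r}(p^{j})^{2}}{p^{js}}\Bigr),
\]
whose local factors are $1+O(p^{-2\sigma})$ (the coefficient of $p^{-s}$ vanishes), so $H_{1}$ converges absolutely in $\sigma>\tfrac12$; and $H_{1}(1)$ is exactly the Euler product defining $a_{r}$. For the second sum, $c(k)=D_{r+1}(k)d_{r}(k)$: here $c(p)=r\,D_{r+1}(p)=r(r+1)+O(p^{-1})$, so $\sum_{k}c(k)k^{-s}=\zeta(s)^{r(r+1)}H_{2}(s)$ with $H_{2}$ having local factors $1+O(p^{-1-\sigma}+p^{-2\sigma})$, hence absolutely convergent in $\sigma>\tfrac12$. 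To compute $H_{2}(1)$ I would substitute $D_{r+1}(p^{\lambda})=(1-p^{-1})^{r+1}\sum_{j\geq0}d_{r+1}(p^{j+\lambda})p^{-j}$, regroup the double sum over $j,\lambda$ by $n=j+\lambda$, and apply the identity
\[
\sum_{\lambda=0}^{n}d_{r}(p^{\lambda})=d_{r+1}(p^{n}),
\]
which is immediate from $(1-x)^{-1}(1-x)^{-r}=(1-x)^{-(r+1)}$. This collapses $\sum_{\lambda\geq0}D_{r+1}(p^{\lambda})d_{r}(p^{\lambda})p^{-\lambda}$ to $(1-p^{-1})^{r+1}\sum_{n\geq0}d_{r+1}(p^{n})^{2}p^{-n}$, so the total power of $(1-p^{-1})$ in the local factor of $H_{2}(1)$ is $r(r+1)+(r+1)=(r+1)^{2}$, whence $H_{2}(1)=a_{r+1}$.

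For the third sum, note that $A(k)\,k/\varphi(k)=\prod_{p\mid k}\frac{1-p^{-1-i\alpha}}{1-p^{-1}}=:\psi_{\alpha}(k)$ is multiplicative and depends only on $\mathrm{rad}(k)$, with $\psi_{\alpha}(p)-1=\frac{1-p^{-i\alpha}}{p-1}$. Hence $c(k)=D_{r+1}(k)d_{r}(k)\psi_{\alpha}(k)$ and $\sum_{k}c(k)k^{-s}=\zeta(s)^{r(r+1)}H_{3}(s,\alpha)$, where the local factor of $H_{3}$ differs from that of $H_{2}$ by $1+O\bigl(|1-p^{-i\alpha}|\,p^{-1-\sigma}\bigr)$; so $H_{3}(\cdot,\alpha)$ is again absolutely convergent in $\sigma>\tfrac12$, and
\[
H_{3}(1,\alpha)=a_{r+1}\prod_{p}\bigl(1+O(|1-p^{-i\alpha}|\,p^{-2})\bigr)=a_{r+1}\bigl(1+O(\alpha)\bigr)=a_{r+1}+O(L^{-1}),
\]
using $|1-p^{-i\alpha}|\leq\min(2,|\alpha|\log p)$ and $\alpha\ll L^{-1}$. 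Feeding $H(1)=a_{r}$, $a_{r+1}$, $a_{r+1}+O(L^{-1})$ respectively into the formula of the first paragraph, and absorbing $O(L^{-1})(\log y)^{r(r+1)}=O(L^{r(r+1)-1})$ into the error in the third case, yields the three asymptotics.

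The analytic half of the argument --- the convolution and partial summation, or equivalently the Perron integral with the classical zero-free region --- is routine, and is where the one-power-of-$\log$ saving in the error term is produced. The real content, and the step on which I expect to spend most effort, is the local bookkeeping: verifying the orders of the poles, checking that the local factors of $H_{2}$ and $H_{3}$ decay fast enough for these arithmetic factors to continue past $\sigma=1$, and above all establishing $H_{2}(1)=a_{r+1}$ via the telescoping identity $\sum_{\lambda\leq n}d_{r}(p^{\lambda})=d_{r+1}(p^{n})$; the $\alpha$-dependence in the third sum is then a harmless perturbation. Finally, should $r$ be non-integral, the only adjustment is that the input estimate for $\sum_{e\leq z}d_{m}(e)/e$ must be quoted from the Selberg--Delange method (cf. Remark 2.3) rather than from an elementary hyperbola-method induction.
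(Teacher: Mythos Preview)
Your proposal is correct and, in fact, supplies considerably more detail than the paper does: the paper's entire proof reads ``The first statement is a well-known result. The other two statements can be proved very similarly with minor changes.'' Your approach --- factor the Dirichlet series as $\zeta(s)^{m}H(s)$ with $H$ absolutely convergent past $\sigma=1$, then extract the main term either by convolution and partial summation or by Perron --- is exactly the standard method the paper is tacitly invoking, and your identification of $H_{2}(1)=a_{r+1}$ via the telescoping identity $\sum_{\lambda\leq n}d_{r}(p^{\lambda})=d_{r+1}(p^{n})$ is the clean way to see why $a_{r+1}$ (rather than some other constant) appears in the second and third formulas. The treatment of the $\alpha$-perturbation in the third sum, bounding $\psi_{\alpha}(p)-1=(1-p^{-i\alpha})/(p-1)$ by $O(|\alpha|\log p/p)$ and absorbing the resulting $O(L^{-1})$ shift in $H_{3}(1,\alpha)$ into the error term, is also correct.
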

\begin{proof}
The first statement is a well-known result. The other two statements can be proved very similarly with minor changes.
\end{proof}

The above lemma leads to

\begin{lemm}
For $f\in C^{1}([0,1])$, we have
\begin{equation*}
\sum_{k\leq y}\frac{d_{r}(k)^2f[k]}{k}=\frac{a_{r}(\log y)^{r^{2}}}{\Gamma(r^{2})}\int_{0}^{1}(1-x)^{r^{2}-1}f(x)dx+O(L^{r^{2}-1}),
\end{equation*}
and
\begin{eqnarray*}
\sum_{k\leq y}\frac{D_{r+1}(k)d_{r}(k)f[k]}{k}=\frac{a_{r+1}(\log y)^{r(r+1)}}{\Gamma(r(r+1))}\int_{0}^{1}(1-x)^{r(r+1)-1}f(x)dx+O(L^{r(r+1)-1}).
\end{eqnarray*}
\end{lemm}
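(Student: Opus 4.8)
The plan is to deduce both identities from the preceding lemma (Lemma 4.9) by Riemann--Stieltjes integration, in exactly the fashion used to establish the first statement of Lemma 4.7. I would treat the first identity in detail; the second is word-for-word the same with $d_r(k)^2$ replaced by $D_{r+1}(k)d_r(k)$, with $r^2$ replaced by $r(r+1)$, and with $a_r$ replaced by $a_{r+1}$, invoking the second rather than the first formula of Lemma 4.9.

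First I would note that Lemma 4.9, although stated for the particular value $y=T^\vartheta$, is purely an asymptotic for the partial sums of $d_r(k)^2/k$ and hence holds with $y$ replaced by an arbitrary parameter $\eta\geq1$:
\[
P(\eta):=\sum_{k\leq\eta}\frac{d_r(k)^2}{k}=\frac{a_r(\log\eta)^{r^2}}{\Gamma(r^2+1)}+E(\eta),\qquad E(\eta)\ll(\log 2\eta)^{r^2-1},
\]
uniformly for $\eta\geq1$ (for bounded $\eta$ both sides are $O(1)$, and $P(1^-)=0$). Writing $f[k]=f\bigl(\tfrac{\log(y/k)}{\log y}\bigr)$ and regarding $\sum_{k\leq y}d_r(k)^2f[k]/k$ as a Stieltjes integral against $dP(\eta)$ over $1^-\leq\eta\leq y$, I would split $dP$ into the smooth piece $\tfrac{a_r(\log\eta)^{r^2-1}}{\Gamma(r^2)\,\eta}\,d\eta$ (the derivative of the main term, using $\Gamma(r^2+1)=r^2\Gamma(r^2)$) plus the differential $dE$. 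After an integration by parts the $dE$-part yields a boundary term $f(0)E(y)\ll L^{r^2-1}$ (the lower endpoint contributes nothing, as $E(1^-)=0$) together with a term $\ll(\log y)^{-1}\int_1^y(\log 2\eta)^{r^2-1}\,\frac{d\eta}{\eta}\ll(\log y)^{r^2-1}$, where I use that $f'$ is bounded on $[0,1]$ and that $r\geq1$; since $\log y\asymp L$, the entire $dE$-contribution is $O(L^{r^2-1})$.

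It then remains to evaluate the smooth piece
\[
\frac{a_r}{\Gamma(r^2)}\int_1^y\frac{(\log\eta)^{r^2-1}}{\eta}\,f\Bigl(\frac{\log(y/\eta)}{\log y}\Bigr)\,d\eta,
\]
and the substitution $\log\eta/\log y=1-x$ turns this immediately into $\tfrac{a_r(\log y)^{r^2}}{\Gamma(r^2)}\int_0^1(1-x)^{r^2-1}f(x)\,dx$, which is the asserted main term; combining the two pieces gives the first identity, and the second follows identically. I do not anticipate a genuine obstacle: the argument is routine once Lemma 4.9 is available, and the only points needing a little care are the uniformity of the error estimate in $\eta$ (so that the Stieltjes manipulation is legitimate all the way down to $\eta=1$) and the elementary bound $\int_1^y(\log 2\eta)^{r^2-1}\,d\eta/\eta\ll(\log y)^{r^2}$, which holds precisely because $r^2\geq1$.
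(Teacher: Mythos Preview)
Your proposal is correct and follows exactly the approach indicated in the paper, which simply states that the formulae ``easily follow from Lemma 4.9 and Stieltjes integration.'' You have written out in detail precisely the Stieltjes-integration argument the paper has in mind (and which is already modeled in the proof of the first statement of Lemma 4.7), so there is nothing to add.
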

\begin{proof}
These formulae easily follow from Lemma 4.9 and Stieltjes integration. 
\end{proof}

The next lemma is an easy consequence of Lemma 4.7, Lemma 4.8 and Lemma 4.10.

\begin{lemm}
We have
\begin{eqnarray*}
&&\sum_{\substack{h,k\leq y\\h=kn}}\frac{d_{r+1}(h)P_{1}[h]d_{r}(k)P_{2}[k]}{h}\sim\frac{a_{r+1}(\log y)^{(r+1)^2}}{\Gamma(r+1)\Gamma(r(r+1))}\int_{0}^{1}(1-x)^{r(r+1)-1}Q_{r}(x)P_{2}(x)dx,\\
&&\sum_{\substack{h,k\leq y\\h=kn}}\frac{\Lambda(n)d_{r}(h)P[h]d_{r}(k)P[k]}{hn^{-i\alpha}}\sim\frac{ra_r(\log y)^{r^2+1}}{\Gamma(r^2)}\int_{0}^{1}\int_{0}^{x}(1-x)^{r^2-1}y^{i\alpha t}P(x-t)P(x)dtdx,
\end{eqnarray*}
and
\begin{eqnarray*}
\sum_{\substack{h,k\leq y\\h=kmn}}\frac{\Lambda(n)d_{r+1}(h)P_{1}[h]d_{r}(k)P_{2}[k]}{hn^{-i\alpha}}&\sim&\frac{ra_{r+1}(\log y)^{(r+1)^2+1}}{\Gamma(r+1)\Gamma(r(r+1))}\nonumber\\
&&\!\!\!\!\!\!\!\!\!\!\!\!\!\!\!\!\!\!\!\!\!\!\!\!\!\!\!\!\!\!\int_{0}^{1}\int_{0}^{x}(1-x)^{r(r+1)-1}y^{i\alpha t}Q_{r}(x-t)P_2(x)dtdx,
\end{eqnarray*}
where each formula is valid up to a saving of $L$ in the error term.
\end{lemm}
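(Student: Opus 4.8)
\emph{Proof plan.} Each of the three identities is obtained by the same mechanism: separate out the variable $k$, evaluate the resulting inner divisor sum by Lemma 4.7, then carry out the sum over $k$ by Lemma 4.10, using Lemma 4.8 to bound the errors. Since all sums are finite (over $h,k\leq y$) there is no convergence issue in the rearrangement. For the first identity, write $h=kn$; the left side becomes
\[
\sum_{k\leq y}\frac{d_r(k)P_2[k]}{k}\sum_{n\leq y/k}\frac{d_{r+1}(kn)P_1[kn]}{n}.
\]
The first formula of Lemma 4.7, with $r$ replaced by $r+1$ and $f=P_1$, turns the inner sum into $\frac{D_{r+1}(k)(\log y)^{r+1}}{\Gamma(r+1)}g_r[k]+O(d_{r+1}(k)F_{\tau_0}(k)L^r)$, where $g_r(x)=\int_0^x t^rP_1(x-t)\,dt=Q_r(x)$. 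The main term is then $\frac{(\log y)^{r+1}}{\Gamma(r+1)}\sum_{k\leq y}\frac{D_{r+1}(k)d_r(k)(P_2Q_r)[k]}{k}$, and since $P_2Q_r$ is a polynomial, hence in $C^1([0,1])$, the second formula of Lemma 4.10 evaluates it; collecting the powers via $(r+1)+r(r+1)=(r+1)^2$ gives the asserted shape.

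For the second identity, $h=kn$ gives $hn^{-i\alpha}=kn^{1-i\alpha}$, so the left side is $\sum_{k\leq y}\frac{d_r(k)P[k]}{k}\sum_{n\leq y/k}\frac{\Lambda(n)d_r(kn)P[kn]}{n^{1-i\alpha}}$. Applying \eqref{5} with $f=P$ replaces the inner sum by $rd_r(k)\log y\int_0^{(\log y/k)/\log y}y^{i\alpha t}P(\frac{\log y/k}{\log y}-t)\,dt+O(d_r(k)F_{\tau_0}(k))$. Writing $\psi(x)=\int_0^xy^{i\alpha t}P(x-t)\,dt$, the main term equals $r\log y\sum_{k\leq y}\frac{d_r(k)^2(P\psi)[k]}{k}$, which the first formula of Lemma 4.10 evaluates; the power of $\log y$ is $1+r^2$. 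The point to check is that $\psi\in C^1([0,1])$ with derivative bounded \emph{uniformly} in $y$; this holds because $\alpha\ll L^{-1}$ forces $\alpha\log y\ll1$, so $y^{i\alpha t}=e^{i\alpha t\log y}$ and its $x$-derivative are $O(1)$.

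For the third identity, $h=kmn$ gives $hn^{-i\alpha}=kmn^{1-i\alpha}$, so the left side is $\sum_{k\leq y}\frac{d_r(k)P_2[k]}{k}\sum_{mn\leq y/k}\frac{\Lambda(n)d_{r+1}(kmn)P_1[kmn]}{mn^{1-i\alpha}}$. The third formula of Lemma 4.7, again with $r\mapsto r+1$ and $f=P_1$, turns the inner sum into a constant multiple of $D_{r+1}(k)(\log y)^{r+2}\int_0^{(\log y/k)/\log y}y^{i\alpha t}Q_r(\frac{\log y/k}{\log y}-t)\,dt$ plus $O(d_{r+1}(k)F_{\tau_0}(k)L^{r+1})$. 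Setting $\Psi(x)=\int_0^xy^{i\alpha t}Q_r(x-t)\,dt$, again $C^1([0,1])$ uniformly in $y$ since $\alpha\log y\ll1$, the main term is a multiple of $(\log y)^{r+2}\sum_{k\leq y}\frac{D_{r+1}(k)d_r(k)(P_2\Psi)[k]}{k}$; the second formula of Lemma 4.10 evaluates it, the powers combining as $(r+2)+r(r+1)=(r+1)^2+1$, and gathering the $\Gamma$-factors yields the stated constant.

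It remains to see that every error is a full factor of $L$ below the corresponding main term. By Lemma 4.8 the contribution of the $O$-term from Lemma 4.7 is, respectively, $\ll L^r\sum_{k\leq y}\frac{d_{r+1}(k)d_r(k)F_{\tau_0}(k)}{k}\ll L^{(r+1)^2-1}$ for the first identity, $\ll\sum_{k\leq y}\frac{d_r(k)^2F_{\tau_0}(k)}{k}\ll L^{r^2}$ for the second, and $\ll L^{r+1}\sum_{k\leq y}\frac{d_{r+1}(k)d_r(k)F_{\tau_0}(k)}{k}\ll L^{(r+1)^2}$ for the third; the $O$-terms coming out of Lemma 4.10 are of the same size once multiplied by the pulled-out power of $\log y$. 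As the main terms have sizes $L^{(r+1)^2}$, $L^{r^2+1}$ and $L^{(r+1)^2+1}$, this gives the claimed saving of $L$. The proof is essentially bookkeeping layered on Lemmas 4.7, 4.8 and 4.10; the only genuinely delicate step is the uniformity in $y$ and $\alpha$ of the $C^1$-test functions $P_2Q_r$, $P\psi$, $P_2\Psi$ fed to Lemma 4.10 — guaranteed by $\alpha\ll L^{-1}$ — together with careful tracking of the exponents of $\log y$ and the Gamma-function constants when the two lemmas are composed.
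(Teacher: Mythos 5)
Your proposal is correct and is exactly the argument the paper intends: the paper offers no written proof beyond declaring the lemma "an easy consequence of Lemma 4.7, Lemma 4.8 and Lemma 4.10," and your decomposition (inner sum over $n$ or $mn$ via Lemma 4.7, outer sum over $k$ via Lemma 4.10, errors via Lemma 4.8, with the uniformity of the test functions secured by $\alpha\log y\ll1$) is precisely that. One caveat: for the third identity your computation actually produces the constant $\frac{(r+1)a_{r+1}}{\Gamma(r+1)\Gamma(r(r+1))}$ (from the factor $\frac{r+1}{\Gamma(r+1)}$ after substituting $r\mapsto r+1$ in Lemma 4.7), not the $\frac{ra_{r+1}}{\Gamma(r+1)\Gamma(r(r+1))}$ printed in the lemma; the paper's own use of the lemma in \eqref{29} carries the factor $(r+1)$, so the printed $r$ is a typo in the statement, and your assertion that gathering the $\Gamma$-factors "yields the stated constant" glosses over this discrepancy rather than resolving it.
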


\begin{lemm}
Assume RH. Let
\begin{equation*}
g(k)=\prod_{p|k}(1-p^{i\alpha}).
\end{equation*}
Then we have, for some $\tau_0>0$,
\begin{equation*}
\sum_{k\leq y}\frac{d_{r}(km)g(k)}{\varphi(km)}=\bigg(\sum_{j=0}^{r}\binom{r}{j}\frac{(-i\alpha\log y)^j}{j!}\bigg)\frac{d_{r}(m)}{\varphi(m)}+O\bigg(\frac{d_r(m)F_{\tau_0}(m)}{m}L^{-1}\bigg).
\end{equation*}
\end{lemm}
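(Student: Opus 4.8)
The plan is to pass to the Dirichlet series attached to the sum and read the asymptotic off a contour integral. Writing $g(k)=\prod_{p\mid k}(1-p^{i\alpha})$ as in the statement, set
\[
\Phi_m(s)=\sum_{k=1}^{\infty}\frac{d_r(km)g(k)}{\varphi(km)k^s},
\]
which converges absolutely for $\Re s>0$. Since $d_r$ and $\varphi$ are multiplicative and $g(k)$ depends only on the radical of $k$, writing $k$ as the product of its $m$-part and a part coprime to $m$ factors $\Phi_m(s)$ into a finite Euler product over $p\mid m$ times $\sum_{(k,m)=1}d_r(k)g(k)\varphi(k)^{-1}k^{-s}$. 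A direct computation of the Euler factor at a prime $p\nmid m$, namely $1+\frac{p}{p-1}(1-p^{i\alpha})\bigl((1-p^{-1-s})^{-r}-1\bigr)=1+rp^{-1-s}-rp^{-1-s+i\alpha}+O(p^{-2-2\Re s+\varepsilon})$, shows that
\[
\Phi_m(s)=\frac{\zeta(1+s)^r}{\zeta(1+s-i\alpha)^r}\,G_m(s),
\]
where $G_m(s)$ is an Euler product converging absolutely and $\ll d_r(m)F_{\tau_0}(m)m^{-1}T^{\varepsilon}$ in a fixed strip $-\delta\le\Re s$ with $0<\delta<\tfrac12$; crucially $G_m(s)\equiv d_r(m)/\varphi(m)$ identically in $s$ when $\alpha=0$, so by analyticity in $\alpha$ each of $G_m(0)-d_r(m)/\varphi(m)$ and the Taylor coefficients $G_m^{(c)}(0)$ with $c\ge 1$ carries a factor $\alpha\ll L^{-1}$ (times a quantity which, using $m\le y$, is $\ll d_r(m)F_{\tau_0}(m)m^{-1}(\log L)^{O(1)}$).

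Next I would apply a truncated Perron formula, writing the sum as $\frac{1}{2\pi i}\int_{(c)}\Phi_m(s)y^s s^{-1}ds$ up to a negligible truncation error, and shift the contour to $\Re s=-\delta$. Here RH enters: it guarantees that $\zeta(1+s-i\alpha)^{-r}$ is analytic and $\ll T^{\varepsilon}$ for $\Re s\ge-\delta$ (recall $|\alpha|\ll L^{-1}$), so combining this with convexity bounds for $\zeta(1+s)^r$ and the bound for $G_m$, the integral on $\Re s=-\delta$ is $\ll y^{-\delta}T^{\varepsilon}d_r(m)F_{\tau_0}(m)m^{-1}$, a power saving that is absorbed into the error term. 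The only pole crossed is $s=0$, of order $r+1$, coming from $\zeta(1+s)^r\cdot s^{-1}$.

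It remains to evaluate the residue at $s=0$. Writing $\zeta(1+s)^r=s^{-r}\phi(s)^r$ and $\zeta(1+s-i\alpha)^{-r}=(s-i\alpha)^r v(s-i\alpha)^r$ with $\phi(w)=w\zeta(1+w)$ and $v=1/\phi$ analytic, $\phi(0)=v(0)=1$, the integrand becomes $s^{-r-1}(s-i\alpha)^r H(s)y^s$ with $H(s)=\phi(s)^r v(s-i\alpha)^r G_m(s)$ analytic near $0$, so the residue is $\frac{1}{r!}\frac{d^r}{ds^r}\bigl[(s-i\alpha)^r H(s)y^s\bigr]_{s=0}$. Expanding $y^s=\sum_b(s\log y)^b/b!$ and applying the Leibniz rule, the group of terms in which all $r$ differentiations fall on $(s-i\alpha)^r$ and $y^s$ contributes $H(0)\sum_{j=0}^{r}\binom{r}{j}\frac{(-i\alpha\log y)^j}{j!}$, by the elementary identity $\frac{1}{r!}\frac{d^r}{ds^r}\bigl[(s-i\alpha)^r y^s\bigr]_{s=0}=\sum_{j=0}^{r}\binom{r}{j}\frac{(-i\alpha\log y)^j}{j!}$; since $H(0)=v(-i\alpha)^r G_m(0)=d_r(m)/\varphi(m)+O\bigl(d_r(m)F_{\tau_0}(m)m^{-1}L^{-1}\bigr)$, this is exactly the claimed main term. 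Every remaining term has at least one derivative landing on $\phi(s)^r v(s-i\alpha)^r$ (which is identically $1$ at $\alpha=0$) or on $G_m$, so it acquires an extra factor $\alpha\ll L^{-1}$ while simultaneously losing a power of $\log y$; a short count then shows each such term is $\ll d_r(m)F_{\tau_0}(m)m^{-1}L^{-1}$.

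The bulk of the actual work — and the step I expect to be the main obstacle — is making all of this uniform in $m$: bounding $G_m(0)$, its low-order derivatives, and the various Euler products by $d_r(m)F_{\tau_0}(m)m^{-1}$ (up to acceptable $T^{\varepsilon}$ factors on the shifted line), and checking that the secondary terms in the residue expansion really do fit inside $O(d_r(m)F_{\tau_0}(m)m^{-1}L^{-1})$. This rests on the constraint $m\le y$, so that $\sum_{p\mid m}\log p\le\log y$ and hence $\sum_{p\mid m}(\log p)^A/p\ll(\log\log y)^A$ for any fixed $A$, together with the elementary comparison $\frac{m}{\varphi(m)}\bigl(1+\sum_{p\mid m}\frac{\log p}{p}\bigr)\le\prod_{p\mid m}\bigl(1+O(p^{-\tau_0})\bigr)=F_{\tau_0}(m)$, valid for any fixed $\tau_0<1$. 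A subsidiary technical point is calibrating the Perron truncation height and $\delta$ so that the integral on $\Re s=-\delta$ genuinely beats $L^{-1}$; this is where RH is used in an essential way.
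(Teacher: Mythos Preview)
Your proposal is correct and follows essentially the same route as the paper: form the Dirichlet series, extract the factor $\zeta(1+s)^r/\zeta(1+s-i\alpha)^r$ from the Euler product, apply Perron and shift the contour leftward using RH to control $\zeta(1+s-i\alpha)^{-r}$, and then compute the residue at $s=0$. The only cosmetic difference is that the paper expands the residue via a five-fold Leibniz sum over Laurent coefficients of $\zeta(1+s)^r$, $y^s$, $\zeta(1+s-i\alpha)^{-r}$, and the two remaining Euler products, whereas you package the last three into $(s-i\alpha)^rH(s)$; your observation that $G_m(s)$ (the paper's $Z_{11}Z_2$) is identically $d_r(m)/\varphi(m)$ at $\alpha=0$ is exactly what the paper uses, and the uniform-in-$m$ bound you flag as the main obstacle is handled there simply by Cauchy's integral formula on a disc of fixed radius combined with the pointwise bound $|Z_2|\le d_r(m)F_{\tau_0}(m)/m$ in the strip---no appeal to $m\le y$ or to sums like $\sum_{p\mid m}(\log p)^A/p$ is needed.
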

\begin{proof}
We first consider the generating series of the above sum
\begin{equation*}
H(s,\alpha)=\sum_{k=1}^{\infty}\frac{d_{r}(km)g(k)}{\varphi(km)k^s}.
\end{equation*}
By multiplicativity we have
\begin{eqnarray}\label{47}
H(s,\alpha)&=&\prod_{p}\bigg(\sum_{j=0}^{\infty}\frac{d_r(p^j)g(p^j)}{\varphi(p^j)p^{js}}\bigg)\prod_{p^\lambda||m}\bigg(\frac{\sum_{j=0}^{\infty}d_r(p^{j+\lambda})g(p^{j})/\varphi(p^{j+\lambda})p^{js}}{\sum_{j=0}^{\infty}d_r(p^{j})g(p^{j})/\varphi(p^j)p^{js}}\bigg)\nonumber\\
&=&Z_1(s,\alpha)Z_2(s,\alpha),
\end{eqnarray}
say. We also decompose $Z_1(s,\alpha)$ as
\begin{equation}\label{50}
Z_1(s,\alpha)=\frac{\zeta(1+s)^r}{\zeta(1+s-i\alpha)^r}Z_{11}(s,\alpha),
\end{equation}
where
\begin{eqnarray*}
Z_{11}(s,\alpha)&=&\prod_{p}\bigg[\bigg(1-\frac{1}{p^{1+s}}\bigg)^r\bigg(1-\frac{1}{p^{1+s-i\alpha}}\bigg)^{-r}\bigg(\sum_{j=0}^{\infty}\frac{d_r(p^j)g(p^j)}{\varphi(p^j)p^{js}}\bigg)\bigg]\\
&=&\prod_{p}\bigg[\bigg(1-\frac{1}{p^{1+s}}\bigg)^r\bigg(1-\frac{1}{p^{1+s-i\alpha}}\bigg)^{-r}\bigg(1+\frac{r(1-p^{i\alpha})}{(p-1)p^s}+\sum_{j=2}^{\infty}\frac{d_r(p^j)g(p^j)}{(1-p^{-1})p^{j(s+1)}}\bigg)\bigg].
\end{eqnarray*}
The product for $Z_{11}(s,\alpha)$ is absolutely and uniformly convergent for $\sigma\geq-1/3$, $|\alpha|\ll L^{-1}$. Hence it represents a bounded analytic function of $s$ and $\alpha$ in that region. We next consider $Z_2(s,\alpha)$. We have, for $s=\sigma+it$,
\begin{eqnarray}\label{46}
\sum_{j=0}^{\infty}\frac{d_r(p^j)g(p^j)}{\varphi(p^j)p^{js}}=\bigg(1-\frac{1}{p^{s+1}}\bigg)^{-r}\bigg(1-\frac{r}{p^{s+1-i\alpha}}+O(p^{-2-\sigma})\bigg).
\end{eqnarray}
Furthermore, we note that (cf. \textbf{\cite{Ng}})
\begin{eqnarray*}
\sum_{j=0}^{\infty}\frac{d_r(p^{j+\lambda})}{p^{j(s+1)}}=\bigg(1-\frac{1}{p^{s+1}}\bigg)^{-r-1}d_r(p^\lambda)(1+O(p^{-1-\sigma})).
\end{eqnarray*}
It is then standard to verify that
\begin{eqnarray*}
\sum_{j=0}^{\infty}\frac{d_r(p^{j+\lambda})g(p^{j})}{\varphi(p^{j+\lambda})p^{js}}=\bigg(1-\frac{1}{p^{s+1}}\bigg)^{-r}\frac{d_r(p^\lambda)}{p^\lambda}(1+O(p^{-1-\sigma})).
\end{eqnarray*}
Combining this with \eqref{47} and \eqref{46} we obtain
\begin{eqnarray}\label{49}
|Z_2(s,\alpha)|\leq\prod_{p^\lambda||m}\frac{d_r(p^\lambda)}{p^\lambda}(1+O(p^{-1-\sigma}))|1-rp^{-s-1+i\alpha}|^{-1}\leq\frac{d_r(m)F_{\tau_0}(m)}{m},
\end{eqnarray}
for some positive constant $\tau_0$, in the region $\sigma\geq-1/3$, $|\alpha|\ll L^{-1}$. Here $\tau_0=1/3$ is admissible.

Now by Perron's formula
\begin{eqnarray}\label{48}
\sum_{k\leq y}\frac{d_r(km)g(k)}{\varphi(km)}&=&\frac{1}{2\pi i}\int_{1-iU}^{1+iU}H(s,\alpha)\frac{y^s}{s}ds\nonumber\\
&&\ +O\bigg(yd_r(m)\sum_{k=1}^{\infty}\frac{d_r(k)|g(k)|}{\varphi(km)k}\min\bigg(1,\frac{1}{U|\log y/k|}\bigg)\bigg).
\end{eqnarray}
By splitting the sum in the $O$-term into the ranges $[1,y/2)$, $[y/2,3y/2)$ and $[3y/2,\infty)$, we find that the sum is
\begin{displaymath}
\ll\frac{yd_r(m)F_{\tau_0}(m)}{Um}.
\end{displaymath}
We now move the line of integration in \eqref{48} to $\sigma=-1/4$ and use Cauchy's theorem. On RH
\begin{eqnarray*}
\zeta(s),\ \zeta(s)^{-1}\ll_\varepsilon(1+|t|)^\varepsilon\qquad(\sigma\geq1/2+\varepsilon,\ |s-1|\gg1),
\end{eqnarray*}
so by \eqref{47}, \eqref{50} and \eqref{49} we have
\begin{eqnarray*}
H(s,\alpha)\ll_\varepsilon\frac{U^\varepsilon d_r(m)F_{\tau_0}(m)}{m}
\end{eqnarray*}
on the new path of integration. So the contribution along the horizontal lines is
\begin{displaymath}
\ll_\varepsilon\frac{yd_r(m)F_{\tau_0}(m)}{Um},
\end{displaymath}
and that along the left edge is
\begin{displaymath}
\ll_\varepsilon\frac{U^\varepsilon d_r(m)F_{\tau_0}(m)}{y^{1/4}m}.
\end{displaymath}
Thus, taking $U=y\log y$ leads to
\begin{equation}\label{10}
\sum_{k\leq y}\frac{d_r(km)g(k)}{\varphi(km)}=\textrm{Res}_{s=0}\bigg(H(s,\alpha)\frac{y^s}{s}\bigg)+O\bigg(\frac{d_{r}(m)F_{\tau_0}(m)}{m}L^{-1}\bigg).
\end{equation}

To compute the residue, we use the Laurent expansion of each factor in
\begin{equation*}
H(s,\alpha)\frac{y^s}{s}=\zeta(1+s-i\alpha)^{-r}Z_{11}(s,\alpha)Z_2(s,\alpha)y^s\zeta(1+s)^rs^{-1}.
\end{equation*}
We have
\begin{eqnarray*}
\zeta(1+s)^rs^{-1}&=&s^{-r-1}(1+a_1s+a_2s^2+\ldots),\nonumber\\
y^s&=&1+(\log y)s+\frac{(\log y)^2}{2!}s^2+\ldots,\nonumber\\
\zeta(1+s-i\alpha)^{-r}&=&f(-i\alpha)+f'(-i\alpha)s+\frac{f''(-i\alpha)}{2!}s^2+\ldots,
\end{eqnarray*}
where we put $f(z)=\zeta(1+z)^{-r}$. It is standard to check that
\begin{equation*}
f^{(j)}(-i\alpha)=r(r-1)\ldots(r-j+1)(-i\alpha)^{r-j}+O(|\alpha|^{r-j+1})\qquad(0\leq j\leq r).
\end{equation*}
We also note that since $Z_{11}(s,\alpha)$ and $Z_2(s,\alpha)$ are analytic and uniformly bounded in $\sigma\geq-1/3$, $|\alpha|\ll L^{-1}$, by Cauchy's theorem
\begin{displaymath}
\bigg(\frac{\partial}{\partial s}\bigg)^{j}Z_{11}(0,\alpha)\ll1,\ \textrm{and }\bigg(\frac{\partial}{\partial s}\bigg)^{j}Z_{2}(0,\alpha)\ll d_r(m)/\varphi(m).
\end{displaymath}
The analyticity in $\alpha$ also implies that
\begin{displaymath}
Z_{11}(0,\alpha)=Z_{11}(0,0)+O(|\alpha|)=1+O(|\alpha|),
\end{displaymath}
and
\begin{displaymath}
Z_2(0,\alpha)=Z_2(0,0)+O(|\alpha|d_r(m)/\varphi(m))=d_r(m)/\varphi(m)(1+O(|\alpha|)).
\end{displaymath}
Thus the residue at $s=0$ is
\begin{eqnarray*}
\textrm{Res}_{s=0}&=&\sum_{u_1+u_2+u_3+u_4+u_5=r}\frac{a_{u_1}(\log y)^{u_2}f^{(u_3)}(-i\alpha)Z_{11}^{(u_4)}(0,\alpha)Z_{2}^{(u_5)}(0,\alpha)}{u_{1}!u_{2}!u_{3}!u_{4}!u_{5}!}\nonumber\\
&=&\sum_{u_2+u_3=r}\frac{(\log y)^{u_2}f^{(u_3)}(-i\alpha)}{u_{2}!u_{3}!}Z_{11}(0,\alpha)Z_{2}(0,\alpha)+O\bigg(\frac{d_r(m)}{\varphi(m)}L^{-1}\bigg)\nonumber\\
&=&\bigg(\sum_{0\leq u_2\leq y}\frac{(-i\alpha\log y)^{u_2}}{u_2!}\binom{r}{u_2}\bigg)\frac{d_r(m)}{\varphi(m)}+O\bigg(\frac{d_r(m)}{\varphi(m)}L^{-1}\bigg).
\end{eqnarray*}
Combining this with \eqref{10}, the lemma follows.
\end{proof}

\section{Proofs of Lemma 2.1 and Lemma 2.2}

From Montgomery-Vaughan's mean value theorem \textbf{\cite{MV1}} we have
\begin{equation*}
M_{1}=\int_{T}^{2T}|H_1({\scriptstyle{\frac{1}{2}}}+it)|^2dt\sim T\sum_{k\leq y}\frac{d_{r+1}(k)^2P_1[k]^2}{k}.
\end{equation*}
By Lemma 4.10,
\begin{equation*}
M_{1}\sim\frac{a_{r+1}T(\log y)^{(r+1)^{2}}}{\Gamma((r+1)^{2})}\int_{0}^{1}(1-x)^{(r+1)^{2}-1}P_1(x)^2dx.
\end{equation*}
This proves Lemma 2.1.

For Lemma 2.2, we first move the line of integration to $\Re{s}=a=1+L^{-1}$. As in Section 3, the contribution from the horizontal lines is $\ll yT^{1/4+\varepsilon}$. Now we have
\begin{equation*}
\zeta(s)=\sum_{n\leq T^{1/2}}\frac{1}{n^{s}}+O(L).
\end{equation*}
Hence
\begin{eqnarray}\label{21}
M_{12}=\int_{T}^{2T}\zeta({\scriptstyle{\frac{1}{2}}}+it)H_1({\scriptstyle{\frac{1}{2}}}-it)H_2({\scriptstyle{\frac{1}{2}}}+it)dt&=&\int_{T}^{2T}\sum_{n\leq T^{1/2}}\frac{1}{n^{s}}H_{1}(1-s)H_{2}(s)dt\nonumber\\
&&\!\!\!\!\!\!\!\!\!\!\!\!\!\!\!\!\!\!\!\!\!\!\!\!\!\!\!\!\!\!\!\!\!\!\!\!\!\!\!\!\!\!\!\!\!\!\!\!\!\!\!\!\!\!\!\!\!\!\!\!\!\!\!\!\!\!\!\!\!\!\!\!\!\!\!\!\!\!\!\!\!\!\!\!\!\!\!\!\!\!\!\!\!\!\!\!\!\!\!\!\!\! +O\bigg(L\int_{T}^{2T}|H_1({\scriptstyle{\frac{1}{2}}}-it)H_2({\scriptstyle{\frac{1}{2}}}+it)|dt\bigg)+O(yT^{1/4+\varepsilon}),
\end{eqnarray}
where $s=a+it$. Here the line of integration in the first $O$-term has been moved back to the $\tfrac{1}{2}$-line with an admissible error. By Cauchy's inequality and Lemma 2.1 this term is 
\begin{equation*}
\ll L\bigg(\int_{T}^{2T}|H_1({\scriptstyle{\frac{1}{2}}}+it)|^2\bigg)^{1/2}\bigg(\int_{T}^{2T}|H_2({\scriptstyle{\frac{1}{2}}}+it)|^2\bigg)^{1/2}\ll TL^{r^2+r+3/2}.
\end{equation*}
Furthermore from Montgomery-Vaughan's mean value theorem, the main term is asymptotic to
\begin{equation*}
T\sum_{\substack{h,k\leq y\\h=kn}}\frac{d_{r+1}(h)P_{1}[h]d_{r}(k)P_{2}[k]}{h}.
\end{equation*}
So, by Lemma 4.11,
\begin{equation*}
M_{12}\sim\frac{a_{r+1}T(\log y)^{(r+1)^2}}{\Gamma(r+1)\Gamma(r(r+1))}\int_{0}^{1}(1-x)^{r(r+1)-1}Q_{r}(x)P_{2}(x)dx.
\end{equation*}
This proves Lemma 2.2.

\section{Proof of Lemma 2.4}

We have
\begin{eqnarray*}
S_1&=&\sum_{T\leq\gamma\leq 2T}H_1(\rho+i\alpha)H_1(1-\rho-i\alpha)=\sum_{h,k\leq y}\frac{d_{r+1}(h)P_1[h]d_{r+1}(k)P_1[k]}{h^{1-i\alpha}k^{i\alpha}}\sum_{T\leq\gamma\leq 2T}\bigg(\frac{h}{k}\bigg)^\rho\\
&=&I+I_1+I_2,
\end{eqnarray*}
where $I$, $I_1$ and $I_2$ are the contributions of the terms $h=k$, $h>k$ and $h<k$, respectively.

In view of Lemma 4.10
\begin{equation}\label{60}
I=\frac{TL}{2\pi}\sum_{k\leq y}\frac{d_{r+1}(k)^2P_1[k]^2}{k} \sim \frac{a_{r+1}TL(\log y)^{(r+1)^{2}}}{2\pi\Gamma((r+1)^{2})}\int_{0}^{1}(1-x)^{(r+1)^{2}-1}P_1(x)^2dx.
\end{equation}
Next we note that $I_2=\overline{I_1}$. We obtain from Lemma 4.6 that
\begin{eqnarray*}
I_1&=&-\frac{T}{2\pi}\sum_{h,k\leq y}\frac{d_{r+1}(h)P_1[h]d_{r+1}(k)P_1[k]}{h^{1-i\alpha}k^{i\alpha}}\Lambda\bigg(\frac{h}{k}\bigg)+O\bigg(L\log L\sum_{k<h\leq y}\frac{d_{r+1}(h)d_{r+1}(k)}{h}\bigg)\\
&&\qquad +O\bigg(L\sum_{k<h\leq y}\frac{d_{r+1}(h)d_{r+1}(k)}{h\langle h/k\rangle}\bigg)+O\bigg(L\sum_{k<h\leq y}\frac{d_{r+1}(h)d_{r+1}(k)}{h\log h/k}\bigg).
\end{eqnarray*}
We denote these four terms by $I_{11}$, $I_{12}$, $I_{13}$ and $I_{14}$, respectively. We have
\begin{eqnarray*}
I_{11}=-\frac{T}{2\pi}\sum_{\substack{h,k\leq y\\h=kn}}\frac{\Lambda(n)d_{r+1}(h)P_1[h]d_{r+1}(k)P_1[k]}{hn^{-i\alpha}}.
\end{eqnarray*}
Using Lemma 4.11 we get
\begin{equation}\label{61}
I_{11}\sim-\frac{(r+1)a_{r+1}T(\log y)^{(r+1)^2+1}}{2\pi\Gamma((r+1)^2)}\int_{0}^{1}\int_{0}^{x}(1-x)^{(r+1)^2-1}y^{i\alpha t}P_1(x-t)P_1(x)dtdx.
\end{equation}
Combining \eqref{60} and \eqref{61}, we obtain the main term in Lemma 2.4.

We are left show that the error terms $I_{12}$, $I_{13}$ and $I_{14}$ are admissible. The bound for $I_{12}$ is trivial, 
\begin{equation*}
I_{12}\ll T^{\varepsilon}\sum_{h,k\leq y}\frac{1}{h}\ll yT^{\varepsilon}.
\end{equation*}
To estimate $I_{13}$, we write $h=uk+v$ where $|v/k|\leq\tfrac{1}{2}$. We observe that $\langle h/k\rangle=|v/k|$ if $u$ is a prime power and $v\ne0$, otherwise $\langle h/k\rangle\geq\tfrac{1}{2}$. So
\begin{displaymath}
I_{13}\ll T^{\varepsilon}\bigg(\sum_{uk\ll y}\sum_{1\leq v\leq k/2}\frac{1}{v}+\sum_{h,k\leq y}\frac{1}{h}\bigg)\ll yT^{\varepsilon}.
\end{displaymath}
Finally for $I_{14}$, we note that $\log h/k\geq\log h/(h-1)\gg 1/h$. So $I_{14}\ll y^2T^\varepsilon$. The proof is complete.

\section{Proof of Lemma 2.5}

\subsection{Evaluation of $J_1(H_1,H_2)$}

We truncate the Dirichlet series of the product of the first two terms in \eqref{3} at $T^{1/2}$,
\begin{eqnarray*}
\frac{\zeta'}{\zeta}(s-i\alpha)\zeta(s)&=&-\sum_{mn\leq T^{1/2}}\frac{\Lambda(m)}{m^{s-i\alpha}n^s}+O\bigg(\sum_{n>T^{1/2}}\frac{\log n}{n^{1+L^{-1}}}\bigg)\nonumber\\
&=&-\sum_{mn\leq T^{1/2}}\frac{\Lambda(m)}{m^{s-i\alpha}n^s}+O(L^2).
\end{eqnarray*}
Hence
\begin{eqnarray*}
J_1(H_1,H_2)&=&-\frac{1}{2\pi i}\int_{a+i(T+\alpha)}^{a+i(2T+\alpha)}\sum_{mn\leq T^{1/2}}\frac{\Lambda(m)}{m^{s-i\alpha}n^s}H_{1}(1-s)H_{2}(s)ds\nonumber\\
&&\qquad\qquad+O\bigg(L^2\int_{T+\alpha}^{2T+\alpha}|H_1(a+it)H_2(1-a-it)|dt\bigg).
\end{eqnarray*}
As before we can move the line of integration in the $O$-term to the $\tfrac{1}{2}$-line with an admissible error of size $O(yT^\varepsilon)$. The same argument as in \eqref{21} then implies that the $O$-term is $\ll TL^{r^2+r+5/2}$. From Montgomery-Vaughan's mean value theorem, the main term is asymptotic to
\begin{equation*}
-\frac{T}{2\pi}\sum_{\substack{h,k\leq y\\h=kmn}}\frac{\Lambda(n)d_{r+1}(h)P_{1}[h]d_{r}(k)P_{2}[k]}{hn^{-i\alpha}}.
\end{equation*}
Thus, by Lemma 4.11,
\begin{eqnarray}\label{29}
J_1(H_1,H_2)\sim-\frac{(r+1)a_{r+1}T(\log y)^{(r+1)^2+1}}{2\pi\Gamma(r+1)\Gamma(r(r+1))}\int_{0}^{1}\int_{0}^{x}(1-x)^{r(r+1)-1}y^{i\alpha t}Q_{r}(x-t)P_2(x)dtdx.
\end{eqnarray}

\subsection{Evaluation of $J_2(H_1,H_2)$}

We recall that
\begin{equation*}
J_2(H_1,H_2)=\frac{1}{2\pi i}\int_{a+i(T+\alpha)}^{a+i(2T+\alpha)}\chi(1-s)\frac{\zeta'}{\zeta}(s-i\alpha)\zeta(s)H_{1}(s)H_{2}(1-s)ds.
\end{equation*}
By Lemma 4.1 we obtain
\begin{equation*}
J_2(H_1,H_2)=\sum_{k\leq y}\frac{d_{r}(k)P_{2}[k]}{k}\sum_{kT/2\pi\leq h\leq kT/\pi}a(h)e(-h/k)+O(yT^{1/2+\varepsilon}),
\end{equation*}
where
\begin{equation*}
a(h)=-\sum_{\substack{nuv=h\\n\leq y}}d_{r+1}(n)P_{1}[n]\Lambda(u)u^{i\alpha}.
\end{equation*}
We write
\begin{equation*}
Q^{*}(s,\alpha,k)=\sum_{h=1}^{\infty}\frac{a(h)e(-h/k)}{h^s}.
\end{equation*}
From Perron's formula, we have
\begin{equation}\label{13}
\sum_{h\leq kT/2\pi}a(h)e(-h/k)=\frac{1}{2\pi i}\int_{a-iT}^{a+iT}Q^{*}(s,\alpha,k)\bigg(\frac{kT}{2\pi}\bigg)^s\frac{ds}{s}+O(kT^{\varepsilon}).
\end{equation}
Lemma 4.5 asserts that $Q^{*}(s,\alpha,k)$ has at most two poles in $\sigma>\tfrac{1}{2}$ at $s=1$ and $s=1+i\alpha$ (we are assuming that $\alpha\ne0$). Hence we move the line of integration in \eqref{13} to $\sigma=a_0=\tfrac{1}{2}+L^{-1}$ and obtain
\begin{eqnarray*}
\frac{1}{2\pi i}\int_{a-iT}^{a+iT}Q^{*}(s,\alpha,k)\bigg(\frac{kT}{2\pi}\bigg)^s\frac{ds}{s}&=&R_1+R_{1+i\alpha}\nonumber\\
&&\!\!\!\!\!\!\!\!\!\!\!\!\!\!\!\!\!\!\!\!\!\!\!\!\!\!\!\!\!\!\!\!\!\!\!\!\!\!\!\!\!\!\!\!\!+\frac{1}{2\pi i}\bigg(\int_{a-iT}^{a_0-iT}+\int_{a_0-iT}^{a_0+iT}+\int_{a_0+iT}^{a+iT}\bigg)Q^{*}(s,\alpha,k)\bigg(\frac{kT}{2\pi}\bigg)^s\frac{ds}{s},
\end{eqnarray*}
where $R_1$ and $R_{1+i\alpha}$ are the residues of the integrand at $s=1$ and $s=1+i\alpha$, respectively. By Lemma 4.5, the left edge of the contour contributes 
\begin{equation*}
\ll y^{1/2}T^{\varepsilon}(kT)^{a_0}\int_{-T}^{T}\frac{dt}{1+|t|}\ll yT^{1/2+\varepsilon}.
\end{equation*}
Also, the contribution along the horizontal lines is
\begin{equation*}
\ll y^{1/2}T^{\varepsilon}\frac{(kT)^{a}}{T}\ll y^{3/2}T^{\varepsilon}.
\end{equation*}
Thus
\begin{equation*}
\sum_{h\leq kT/2\pi}a(h)e(-h/k)=R_1+R_{1+i\alpha}+O(yT^{1/2+\varepsilon}+y^{3/2}T^{\varepsilon}).
\end{equation*}

We now compute the residues $R_1$ and $R_{1+i\alpha}$. Let $Q(s,\alpha,h/k)$ be as in Lemma 4.3. Then we have
\begin{equation*}
Q^{*}(s,\alpha,k)=\sum_{h\leq y}\frac{d_{r+1}(h)P_{1}[h]}{h^s}Q(s,\alpha,h/k).
\end{equation*}
Hence by Lemma 4.3(i), we obtain
\begin{equation*}
R_1=\frac{kT}{2\pi}\sum_{h\leq y}\frac{d_{r+1}(h)P_{1}[h]}{h}\times\left\{ \begin{array}{ll}
\frac{\zeta'}{\zeta}(1-i\alpha) & \textrm{if $K=1$}\\
\frac{-\log p}{p^{(1-i\alpha)\lambda}(1-p^{-1+i\alpha})} & \textrm{if $K=p^\lambda>1$}\\
0 & \textrm{otherwise,}
\end{array} \right.
\end{equation*}
where $K=k/(h,k)$. Also, by Lemma 4.3(ii) we have
\begin{equation*}
R_{1+i\alpha}=-\frac{1}{1+i\alpha}\bigg(\frac{kT}{2\pi}\bigg)^{1+i\alpha}\sum_{h\leq y}\frac{d_{r+1}(h)P_{1}[h]}{h^{1+i\alpha}}\frac{\zeta(1+i\alpha)}{K^{i\alpha}\varphi(K)}\prod_{p|K}(1-p^{i\alpha}).
\end{equation*}
Thus
\begin{eqnarray*}
J_2(H_1,H_2)&=&\frac{T}{2\pi}\frac{\zeta'}{\zeta}(1-i\alpha)\sum_{\substack{h,k\leq y\\h=kn}}\frac{d_{r+1}(h)P_{1}[h]d_{r}(k)P_{2}[k]}{h}\nonumber\\
&&\nonumber\\
&&\!\!\!\!\!\!\!\!\!\!\!\!\!\!\!\!\!\!\!\!\!\!\!-\frac{T}{2\pi}\sum_{1<p^\lambda\leq y}\frac{\log p}{p^{(1-i\alpha)\lambda}(1-p^{-1+i\alpha})}\sum_{\substack{(h,p)=1\\hk\leq y\\k\leq y/p^\lambda}}\frac{d_{r+1}(hk)P_{1}[hk]d_{r}(p^\lambda k)P_2[p^\lambda k]}{hk}\nonumber\\
&&\!\!\!\!\!\!\!\!\!\!\!\!\!\!\!\!\!\!\!\!\!\!\!-\frac{\zeta(1+i\alpha)}{1+i\alpha}\bigg(\frac{T}{2\pi}\bigg)^{1+i\alpha}\sum_{\substack{h,k\leq y}}\frac{d_{r+1}(h)P_{1}[h]d_{r}(k)P_{2}[k]}{hk^{-i\alpha}}\frac{\prod_{p|K}(1-p^{i\alpha})}{(hK)^{i\alpha}\varphi(K)}+O(yT^{1/2+\varepsilon}).
\end{eqnarray*}

We denote the three main terms by $J_{21}$, $J_{22}$ and $J_{23}$, respectively. The first expression follows from Lemma 4.11. By noting that $\zeta'(1-i\alpha)/\zeta(1-i\alpha)=(i\alpha)^{-1}+O(1)$, $J_{21}$ is asymptotic to
\begin{equation}\label{37}
\frac{a_{r+1}T(\log y)^{(r+1)^2}}{2\pi i\alpha\Gamma(r+1)\Gamma(r(r+1))}\int_{0}^{1}(1-x)^{r(r+1)-1}Q_{r}(x)P_{2}(x)dx.
\end{equation}

For the second expression, we first note that the contribution of the terms for which $\lambda\geq2$, or $p$ is a prime divisor of $h$ or $k$ is
\begin{equation*}
\ll T\sum_{\substack{h,k\leq y\\h=nk}}\frac{d_{r+1}(h)d_{r}(k)}{h}\ll TL^{(r+1)^2}.
\end{equation*}
Hence, we have, up to an error term of size $O(TL^{(r+1)^2})$,
\begin{equation*}
J_{22}=-\frac{rT}{2\pi}\sum_{p\leq y}\frac{\log p}{p^{1-i\alpha}-1}\sum_{\substack{h\leq y,k\leq y/p\\h=nk}}\frac{d_{r+1}(h)P_{1}[h]d_{r}(k)P_2[pk]}{h}.
\end{equation*}
By Lemma 4.11, the sum over $h$ and $k$ is
\begin{equation*}
\frac{a_{r+1}(\log y)^{(r+1)^2}}{\Gamma(r+1)\Gamma(r(r+1))}\int_{0}^{\frac{\log y/p}{\log y}}x^{r(r+1)-1}Q_{r}(1-x)P_{2}\bigg(\frac{\log y/p}{\log y}-x\bigg)dx,
\end{equation*}
up to an error term of size $O(L^{(r+1)^2-1})$. The contribution of this $O$-term to $J_{22}$ is $\ll TL^{(r+1)^2}$. Hence the leading term of $J_{22}$ is
\begin{equation*}
-\frac{ra_{r+1}T(\log y)^{(r+1)^2}}{2\pi\Gamma(r+1)\Gamma(r(r+1))}\sum_{p\leq y}\frac{\log p}{p^{1-i\alpha}-1}g(p),
\end{equation*}
where
\begin{equation*}
g(p)=\int_{0}^{\frac{\log y/p}{\log y}}x^{r(r+1)-1}Q_{r}(1-x)P_{2}\bigg(\frac{\log y/p}{\log y}-x\bigg)dx.
\end{equation*}
Now from the prime number theorem, it is standard to check that
\begin{equation*}
\sum_{p\leq y}\frac{\log p}{p^{1-i\alpha}-1}=\frac{y^{i\alpha}-1}{i\alpha}+O(1).
\end{equation*}
So by Stieltjes integration,
\begin{equation*}
\sum_{p\leq y}\frac{\log p}{p^{1-i\alpha}-1}g(p)=\int_{1}^{y}\frac{g(t)dt}{t^{1-i\alpha}}+O(1)=\log y\int_{0}^{1}g(y^t)y^{i\alpha t}dt+O(1).
\end{equation*}
Thus 
\begin{eqnarray}\label{34}
J_{22}\sim-\frac{ra_{r+1}T(\log y)^{(r+1)^2+1}}{2\pi\Gamma(r+1)\Gamma(r(r+1))}\int_{0}^{1}\int_{0}^{x}(1-x)^{r(r+1)-1}y^{i\alpha t}Q_{r}(x)P_{2}(x-t)dtdx.
\end{eqnarray}

We are left to evaluate $J_{23}$. Using the M\"obius inversion 
\begin{equation*}
f((h,k))=\sum_{\substack{m|h\\m|k}}\sum_{n|m}\mu(n)f\bigg(\frac{m}{n}\bigg),
\end{equation*}
the sum over $h$ and $k$ is
\begin{equation}\label{45}
\sum_{\substack{h,k\leq y}}\frac{d_{r+1}(h)P_{1}[h]d_{r}(k)P_{2}[k]}{hk^{-i\alpha}}\sum_{\substack{m|h\\m|k}}\sum_{n|m}\mu(n)\frac{\prod_{p|kn/m}(1-p^{i\alpha})}{(\frac{hkn}{m})^{i\alpha}\varphi(\frac{kn}{m})}.
\end{equation}
By writing $hm$ and $km$ for $h$ and $k$, respectively, the above expression is
\begin{equation*}
\sum_{m\leq y}\frac{1}{m}\sum_{h\leq y/m}\frac{d_{r+1}(hm)P_{1}[hm]}{h^{1+i\alpha}}\sum_{k\leq y/m}d_{r}(km)P_{2}[km]\sum_{n|m}\frac{\mu(n)}{n^{i\alpha}}\frac{\prod_{p|kn}(1-p^{i\alpha})}{\varphi(kn)}.
\end{equation*}
We let 
\begin{eqnarray*}
f(k)=\frac{\prod_{p|k}(1-p^{i\alpha})}{\varphi(k)}.
\end{eqnarray*}
It is standard to verify that $f(k)$ is multiplicative. Hence the sum over $n$ is 
\begin{eqnarray*}
\sum_{n|m}\frac{\mu(n)f(kn)}{n^{i\alpha}}=f(k)\prod_{p|m}\bigg(1-\frac{f(kp)}{f(k)p^{i\alpha}}\bigg)=f(k)\prod_{\substack{p|m\\p\nmid k}}\bigg(1-\frac{f(p)}{p^{i\alpha}}\bigg)\prod_{\substack{p|m\\p|k}}\bigg(1-\frac{\varphi(k)}{\varphi(kp)p^{i\alpha}}\bigg).
\end{eqnarray*}
This can be simplified further as
\begin{eqnarray*}
\frac{\prod_{p|k}(1-p^{i\alpha})}{\varphi(k)}\prod_{\substack{p|m\\p\nmid k}}\frac{p(1-p^{-(1+i\alpha)})}{p-1}\prod_{\substack{p|m\\p|k}}(1-p^{-(1+i\alpha)})=\frac{mA(m)g(k)}{\varphi(km)},
\end{eqnarray*}
where
\begin{eqnarray*}
A(m)=\prod_{p|m}(1-p^{-(1+i\alpha)})\qquad\textrm{and}\qquad g(k)=\prod_{p|k}(1-p^{i\alpha}).
\end{eqnarray*}
So \eqref{45} is equal to
\begin{equation*}
\sum_{m\leq y}A(m)\sum_{h\leq y/m}\frac{d_{r+1}(hm)P_{1}[hm]}{h^{1+i\alpha}}\sum_{k\leq y/m}\frac{d_{r}(km)g(k)P_{2}[km]}{\varphi(km)}.
\end{equation*}
By Stieltjes integration and Lemma 4.12, the sum over $k$ is
\begin{eqnarray*}
&&\frac{d_{r}(m)}{\varphi(m)}\bigg(\sum_{j=1}^{r}\binom{r}{j}\int_{1}^{y/m}\frac{(-i\alpha\log t)^{j}}{(j-1)!t\log t}P_2[mt]dt+P_2[m]\bigg)+O\bigg(\frac{d_{r}(m)F_{\tau_0}(m)}{m}L^{-1}\bigg)\nonumber\\
&=&\frac{d_{r}(m)}{\varphi(m)}\bigg(\sum_{j=1}^{r}\binom{r}{j}\frac{(-i\alpha\log y)^{j}}{(j-1)!}R_{j-1}[m]+P_2[m]\bigg)+O\bigg(\frac{d_{r}(m)F_{\tau_0}(m)}{m}L^{-1}\bigg).
\end{eqnarray*}
Using Lemma 4.7 and Lemma 4.8, the contribution of the $O$-term to $J_{23}$ is
\begin{equation*}
\ll TL^{r+1}\sum_{m\leq y}\frac{d_{r+1}(m)d_{r}(m)F_{\tau_0}(m)}{\varphi(m)}\ll TL^{(r+1)^2+\varepsilon}.
\end{equation*}
Now Lemma 4.7 gives
\begin{eqnarray*}
\sum_{h\leq y/m}\frac{d_{r+1}(hm)P_{1}[hm]}{h^{1+i\alpha}}&=&\frac{D_{r+1}(m)(\log y)^{r+1}}{\Gamma(r+1)}\nonumber\\
&&\!\!\!\!\!\!\!\!\!\!\!\!\!\!\!\!\!\!\!\!\!\!\!\!\!\!\!\!\!\!\!\!\!\!\!\!\!\!\!\!\!\!\!\!\!\int_{0}^{\frac{\log y/m}{\log y}}t^{r}y^{-i\alpha t}P_1\bigg(\frac{\log y/m}{\log y}-t\bigg)dt+O(d_{r+1}(m)F_{\tau_0}(m)L^{r}).
\end{eqnarray*}
Again, the contribution of this $O$-term to $J_{23}$ is $\ll TL^{(r+1)^2+\varepsilon}$. Thus, up to an error term of size $O(TL^{(r+1)^2+\varepsilon})$,
\begin{eqnarray*}
J_{23}&=&-\frac{\zeta(1+i\alpha)}{1+i\alpha}\bigg(\frac{T}{2\pi}\bigg)^{1+i\alpha}\frac{(\log y)^{r+1}}{\Gamma(r+1)}\sum_{m\leq y}\frac{D_{r+1}(m)d_{r}(m)A(m)}{\varphi(m)}\nonumber\\
&&\qquad\int_{0}^{\frac{\log y/m}{\log y}}t^{r}y^{-i\alpha t}P_1\bigg(\frac{\log y/m}{\log y}-t\bigg)\bigg(\sum_{j=1}^{r}\binom{r}{j}\frac{(-i\alpha\log y)^{j}}{(j-1)!}R_{j-1}[m]+P_2[m]\bigg)dt.
\end{eqnarray*}
By Lemma 14 and Stieltjes integration, the sum over $m$ is
\begin{eqnarray*}
&&\frac{a_{r+1}}{\Gamma(r(r+1))}\int_{1}^{y}\int_{0}^{\frac{\log y/x}{\log y}}\frac{(\log x)^{r(r+1)-1}}{x}t^{r}y^{-i\alpha t}P_1\bigg(\frac{\log y/x}{\log y}-t\bigg)\\
&&\qquad\qquad \bigg(\sum_{j=1}^{r}\binom{r}{j}\frac{(-i\alpha\log y)^{j}}{(j-1)!}R_{j-1}[x]+P_2[x]\bigg)dtdx+O(L^{r(r+1)-1}).\nonumber
\end{eqnarray*}
We note that
\begin{eqnarray*}
\frac{\zeta(1+i\alpha)}{1+i\alpha}\bigg(\frac{T}{2\pi}\bigg)^{1+i\alpha}=\frac{T^{1+i\alpha}}{2\pi i\alpha}+O(T).
\end{eqnarray*}
Hence, substituting $1-\log x/\log y$ by $x$ leads to
\begin{eqnarray}\label{39}
J_{23}&\sim&-\frac{a_{r+1}T^{1+i\alpha}(\log y)^{(r+1)^2}}{2\pi i\alpha\Gamma(r+1)\Gamma(r(r+1))}\int_{0}^{1}\int_{0}^{x}(1-x)^{r(r+1)-1}t^{r}y^{-i\alpha t}\nonumber\\
&&\qquad\qquad P_1(x-t)\bigg(\sum_{j=1}^{r}\binom{r}{j}\frac{(-i\alpha\log y)^{j}}{(j-1)!}R_{j-1}(x)+P_2(x)\bigg)dtdx.
\end{eqnarray}

\subsection{Evaluation of $J_3(H_1,H_2)$}

We will first consider
\begin{equation*}
J_4(t)=\frac{1}{2\pi i}\int_{a+i(t+\alpha)}^{a+i(2t+\alpha)}\zeta(1-s)H_{1}(s)H_{2}(1-s)ds.
\end{equation*}
As before, we move the line of integration to the $\tfrac{1}{2}$-line. The contribution along the horizontal lines is $O(yt^{1/4+\varepsilon})$. The integral along the left edge is
\begin{equation*}
\frac{1}{2\pi}\overline{\int_{t+\alpha}^{2t+\alpha}\zeta(\tfrac{1}{2}+it)H_1(\tfrac{1}{2}-it)H_2(\tfrac{1}{2}+it)dt}.
\end{equation*}
Hence, by Lemma 2.2 we have
\begin{eqnarray}\label{4}
J_4(t)&=&\frac{a_{r+1}t(\log y)^{(r+1)^2}}{2\pi\Gamma(r+1)\Gamma(r(r+1))}\int_{0}^{1}(1-x)^{r(r+1)-1}Q_{r}(x)P_{2}(x)dx\nonumber\\
&&\qquad\ +O(yt^{1/4+\varepsilon})+O(tL^{(r+1)^2-1}).
\end{eqnarray}

By Stirling's formula we have
\begin{equation*}
\frac{\chi'}{\chi}(\tfrac{1}{2}-it+i\alpha)=-\log\frac{t}{2\pi}+O(t^{-1})\qquad(t\geq1).
\end{equation*}
Hence
\begin{eqnarray*}
J_3(H_1,H_2)&=&-\int_{T}^{2T}\log\frac{t}{2\pi}J_{4}'(t)dt\nonumber\\
&&\!\!\!\!\!\!\!\!\!\!\!\!\!\!\!\!+O\bigg(\int_{T+\alpha}^{2T+\alpha}|\chi(1-a-it)\zeta(a+it)H_{1}(a+it)H_{2}(1-a-it)|\frac{dt}{t}\bigg).
\end{eqnarray*}
The integrand in the error term is $\ll yT^{-1/2+\varepsilon}$. So the $O$-term is bounded by $yT^{1/2+\varepsilon}$. Hence, integration by parts leads to
\begin{equation*}
J_3(H_1,H_2)=-(\log T)\big(J_{4}(2T)-J_4(T)\big)+O\bigg(\bigg|\int_{T}^{2T}\frac{J_{4}(t)}{t}dt\bigg|\bigg)+O(yT^{1/2+\varepsilon}).
\end{equation*}
In view of \eqref{4}, we deduce that
\begin{eqnarray*}
J_3(H_1,H_2)&\sim&-\frac{a_{r+1}TL(\log y)^{(r+1)^2}}{2\pi\Gamma(r+1)\Gamma(r(r+1))}\int_{0}^{1}(1-x)^{r(r+1)-1}Q_{r}(x)P_{2}(x)dx.
\end{eqnarray*}

This, \eqref{29}, \eqref{37}, \eqref{34} and \eqref{39} establish Lemma 2.5.

\section{Deduction of Theorem 1.1}

In this section, we will demonstrate how Theorem 1.1 follows from Lemmas 2.1--2.6. Our arguments show that we can choose $\vartheta=\tfrac{1}{2}-\varepsilon$. Hence Lemmas 2.1--2.3 give
\begin{equation}\label{40}
\mathscr{M}_1(H,T)\sim a_{r+1}T(\log y)^{(r+1)^2}U,
\end{equation}
where
\begin{eqnarray*}
U&=&\frac{1}{\Gamma((r+1)^2)}\int_{0}^{1}(1-x)^{(r+1)^2-1}P_1(x)^2dx\\
&&\qquad\qquad+\frac{2}{\Gamma(r+1)\Gamma(r(r+1))}\int_{0}^{1}(1-x)^{r(r+1)-1}Q_r(x)P_2(x)dx\\
&&\qquad\qquad+\frac{2}{\Gamma(r)^2\Gamma(r^2)}\int_{0}^{1}(1-x)^{r^2-1}(R_{r-1}(x)-R_r(x))R_{r-1}(x)dx.
\end{eqnarray*}

Now from Lemma 2.4,
\begin{eqnarray}\label{41}
\int_{-c/L}^{c/L}\sum_{T\leq\gamma\leq 2T}|H_1(\tfrac{1}{2}+i(\gamma+\alpha))|^2d\alpha\sim\frac{a_{r+1}T(\log y)^{(r+1)^2}}{\pi}V_1,
\end{eqnarray}
where
\begin{eqnarray*}
V_1=\frac{1}{\Gamma((r+1)^2)}\int_{0}^{1}(1-x)^{(r+1)^2-1}P_1(x)\bigg(cP_1(x)-2(r+1)\int_{0}^{x}\frac{\sin(\frac{ct}{2})P_1(x-t)}{t}dt\bigg)dx.
\end{eqnarray*}
Similarly, by Lemma 2.5 we have
\begin{eqnarray}\label{42}
2\Re\bigg(\int_{-c/L}^{c/L}\sum_{T\leq\gamma\leq 2T}\zeta H_2(\rho+i\alpha)H_1(1-\rho-i\alpha)d\alpha\bigg)\sim\frac{a_{r+1}T(\log y)^{(r+1)^2}}{\pi}V_2,
\end{eqnarray}
where
\begin{eqnarray*}
V_2&=&\frac{1}{\Gamma(r+1)\Gamma(r(r+1))}\int_{0}^{1}(1-x)^{r(r+1)-1}\bigg(2cQ_r(x)P_2(x)\\
&&\!\!\!\!\!\!\!\!\!\! -2(r+1)P_2(x)\int_{0}^{x}\frac{\sin(\frac{ct}{2})Q_r(x-t)}{t}dt-2rQ_r(x)\int_{0}^{x}\frac{\sin(\frac{ct}{2})P_2(x-t)}{t}dt\\
&&\!\!\!\!\!\!\!\!\!\! +P_2(x)\int_{0}^{x}\int_{-c/2}^{c/2}\frac{\sin((t-2)\eta)}{\eta}t^rP_1(x-t)d\eta dt\\
&&\!\!\!\!\!\!\!\!\!\! +\sum_{2j+1\leq r}\frac{(-1)^j}{(2j)!}\binom{r}{2j+1}R_{2j}(x)\int_{0}^{x}\int_{-c/2}^{c/2}\cos((t-2)\eta)\eta^{2j}t^rP_1(x-t)d\eta dt\\
&&\!\!\!\!\!\!\!\!\!\! +\sum_{2j+2\leq r}\frac{(-1)^{j+1}}{(2j+1)!}\binom{r}{2j+2}R_{2j+1}(x)\int_{0}^{x}\int_{-c/2}^{c/2}\sin((t-2)\eta)\eta^{2j+1}t^rP_1(x-t)d\eta dt\bigg)dx.
\end{eqnarray*}
Furthermore, we note that $\Re(i\alpha\log y)^j=(-1)^l(\alpha\log y)^{2l}$ for $j=2l$, and $\Re(i\alpha\log y)^j=0$ for $j=2l+1$. Hence Lemma 2.6 gives
\begin{eqnarray}\label{43}
\int_{-c/L}^{c/L}\sum_{T<\gamma\leq 2T}|\zeta H_2({\scriptstyle{\frac{1}{2}}}+i(\gamma+\alpha))|^2d\alpha\sim\frac{a_{r+1}T(\log y)^{(r+1)^2}}{\pi}V_3,
\end{eqnarray}
where
\begin{eqnarray*}
V_3=\frac{1}{\Gamma(r)^2\Gamma(r^2)}\sum_{j=1}^{\infty}\frac{(-1)^jc^{2j+1}}{2^{2j-1}(2j+1)}\int_{0}^{1}(1-x)^{r^2-1}B(r,\tfrac{1}{2},2j;x)dx.
\end{eqnarray*}
Here $B(r,\theta,j;x)$ is defined as in Lemma 2.6.

Combining \eqref{40}, \eqref{41}, \eqref{42} and \eqref{43} we obtain that
\begin{eqnarray*}
h(c)=\frac{1}{\pi}\frac{V_1+V_2+V_3}{U}+o(1).
\end{eqnarray*}
Consider the polynomials $P_1(x)=\sum_{j\leq M}c_jx^j$ and $P_x(x)=\sum_{j\leq M}d_jx^j$. Choosing $r=2$, $M=10$ and running Mathematica's Minimize command, we obtain $\lambda>3.033$. Precisely, with 
\begin{eqnarray*}
P_1(x)&=&-3+97x-1730x^2+14830x^3-70248x^4+172217x^5-154805x^6-109555x^7\\
&&\qquad\qquad+188895x^8+130288x^9-186298x^{10}
\end{eqnarray*}
and
\begin{eqnarray*}
P_2(x)&=&-258+9245x-96770x^2+428888x^3-856147x^4+592829x^5+169210x^6\\
&&\qquad\qquad+94624x^7-716274x^8+230263x^9+154420x^{10},
\end{eqnarray*}
we have
\begin{eqnarray*}
h(3.033\pi)=0.998885\ldots<1.
\end{eqnarray*}
This and \eqref{51} complete the proof of the theorem.

\end{document}